\tikzstyle{arrow}=[draw, -latex]
\newtheoremstyle{dotless}{}{}{\itshape}{}{\bfseries}{}{}{}
\theoremstyle{dotless}
\theoremstyle{plain}
\newtheorem{thm}{Theorem}[section]
\newtheorem{prop}[thm]{Proposition}
\newtheorem{cor}[thm]{Corollary}
\theoremstyle{definition}
\newtheorem{defn}[thm]{Definition}
\newtheorem{rem}[thm]{Remark}
\newtheorem{exa}[thm]{Example}
\newtheorem{cond}[thm]{Condition}
\newcommand{\N} {\mathbb{N}}
\newcommand{\R} {\mathbb{R}}
\newcommand{\C} {\mathbb{C}}
\newcommand{\oacx} {\overline{\operatorname{acx}}}
\DeclareMathOperator{\id}{id}
\DeclareMathOperator{\re}{Re}
\DeclareMathOperator{\im}{Im}
\providecommand{\differential}{\mathrm{d}}
\renewcommand{\d}{\differential}
\newcommand{\norm}[2]{%
  \cntloop=1 \normrec{#1}{\mskip\thinmuskip#2}}
\newcommand{\normrec}[2]{%
  \ifnum\cntloop<#1
    \advance\cntloop by 1
    \def\next{\normrec{#1}{\left|\!#2\right|\!}}%
  \else
    \def\next{\left|\!#2\right|}%
  \fi
  \next}
\begin{document}

\title[Parameter dependence]{Parameter dependence of solutions of the Cauchy-Riemann equation on spaces of weighted smooth functions}
\author[K.~Kruse]{Karsten Kruse}
\address{TU Hamburg \\ Institut f\"ur Mathematik \\
Am Schwarzenberg-Campus~3 \\
Geb\"aude E \\
21073 Hamburg \\
Germany}
\email{karsten.kruse@tuhh.de}

\subjclass[2010]{Primary 35A01, 35B30, 32W05, 46A63, Secondary 46A32, 46E40}

\keywords{Cauchy-Riemann, parameter dependence, weight, smooth, solvability, vector-valued}

\date{\today}
\begin{abstract}
We study the inhomogeneous Cauchy-Riemann equation on spaces $\mathcal{EV}(\Omega,E)$ of weighted $\mathcal{C}^{\infty}$-smooth 
$E$-valued functions on an open set $\Omega\subset\R^{2}$ whose growth on strips along the real axis 
is determined by a family of continuous weights $\mathcal{V}$ where $E$ is a locally convex 
Hausdorff space over $\C$. 
We derive sufficient conditions on the weights $\mathcal{V}$ such that the kernel $\operatorname{ker}\overline{\partial}$ 
of the Cauchy-Riemann operator $\overline{\partial}$ in $\mathcal{EV}(\Omega):=\mathcal{EV}(\Omega,\C)$ 
has the property $(\Omega)$ of Vogt. 
Then we use previous results and conditions on the surjectivity of the Cauchy-Riemann operator
$\overline{\partial}\colon\mathcal{EV}(\Omega)\to\mathcal{EV}(\Omega)$ 
and the splitting theory of Vogt for Fr\'{e}chet spaces and of 
Bonet and Doma\'nski for (PLS)-spaces to deduce the surjectivity of the Cauchy-Riemann operator on 
the space $\mathcal{EV}(\Omega,E)$ if $E:=F_{b}'$ where $F$ is a Fr\'{e}chet space 
satisfying the condition $(DN)$ or if $E$ is an ultrabornological (PLS)-space having the property $(PA)$. 
As a consequence, for every family of right-hand sides $(f_{\lambda})_{\lambda\in U}$ in $\mathcal{EV}(\Omega)$ 
which depends smoothly, holomorphically or distributionally on a parameter $\lambda$ there is 
a family $(u_{\lambda})_{\lambda\in U}$ in $\mathcal{EV}(\Omega)$ with the same kind of parameter dependence 
which solves the Cauchy-Riemann equation $\overline{\partial}u_{\lambda}=f_{\lambda}$ for all $\lambda\in U$.
\end{abstract}
\maketitle
\section{Introduction}
Let $E$ be a linear space of functions on a set $U$ and 
$P(\partial)\colon\mathcal{F}(\Omega)\to\mathcal{F}(\Omega)$ be a linear partial differential operator
with constant coefficients which acts continuously on a locally convex Hausdorff space of (generalized) differentiable 
scalar-valued functions $\mathcal{F}(\Omega)$ on an open set $\Omega\subset\R^{n}$.
We call the elements of $U$ parameters and say that a family 
$(f_{\lambda})_{\lambda\in U}$ in $\mathcal{F}(\Omega)$ depends on a parameter
w.r.t.\ $E$ if the map $\lambda\mapsto f_{\lambda}(x)$ is an element of $E$ for every $x\in\Omega$. 
The question of parameter dependence is whether for every family 
$(f_{\lambda})_{\lambda\in U}$ in $\mathcal{F}(\Omega)$ depending on a parameter w.r.t.\ $E$ 
there is a family $(u_{\lambda})_{\lambda\in U}$ in $\mathcal{F}(\Omega)$ with the same kind of parameter dependence 
which solves the partial differential equation
\[
P(\partial)u_{\lambda}=f_{\lambda},\quad \lambda\in U.
\]
In particular, it is the question of $\mathcal{C}^{k}$-smooth (holomorphic, distributional, etc.) 
parameter dependence if $E$ is the space $\mathcal{C}^{k}(U)$ of $k$-times continuously partially differentiable functions 
on an open set $U\subset\R^{d}$ (the space $\mathcal{O}(U)$ of holomorphic functions on an open set $U\subset\C$, 
the space of distributions $\mathcal{D}(U)'$ on an open set $U\subset\R^{d}$, etc.). 

The question of parameter dependence has been subject of extensive research varying in the choice of the spaces 
$E$, $\mathcal{F}(\Omega)$ and the properties of the partial differential operator $P(\partial)$, e.g.\ 
being (hypo)elliptic, parabolic or hyperbolic. Even partial differential differential operators $P_{\lambda}(\partial)$ 
where the coefficients also depend $\mathcal{C}^{k}([0,1])$-smoothly \cite{mantlik1990}, 
$\mathcal{C}^{\infty}$-smoothly \cite{treves1962_b, treves1962_a}, 
holomorphically \cite{mantlik1991, mantlik1992, treves1962_b} 
or differentiable resp.\ real analytic \cite{browder1962} on the parameter $\lambda$ were considered. 
The case that the coefficients of the partial differential differential operator $P(x,\partial)$ are non-constant 
functions in $x\in\Omega$ was treated for $\mathcal{F}(\Omega)=\mathscr{A}(\R^{n})$, 
the space of real analytic functions on $\R^{n}$, as well \cite{Bonet1998, bonetdomanski2001}. 

The answer to the question of $\mathcal{C}^{k}$-smooth (holomorphic, distributional, etc.) parameter dependence is 
obviously affirmative if $P(\partial)$ has a linear continuous right inverse. The problem to determine those $P(\partial)$ 
which have such a right inverse was posed by Schwartz in the early 1950s (see \cite[p.\ 680]{farkas2011}).
In the case that $\mathcal{F}(\Omega)$ is the space of $\mathcal{C}^{\infty}$-smooth functions or distributions 
on an open set $\Omega\subset\R^{n}$ the problem was solved in 
\cite{meise_taylor_vogt1989, meise_taylor_vogt1990, meise_taylor_vogt_1994} and in the case of ultradifferentiable functions or 
ultradistributions in \cite{meise_taylor_vogt1996_b} by means of Phragm\'en-Lindel\"of type conditions. 
The case that $\mathcal{F}(\Omega)$ is a space of weighted $\mathcal{C}^{\infty}$-smooth functions on $\Omega=\R^{n}$ 
or its dual was handled in \cite{langenbruch1989, langenbruch1995}, even for some $P(x,\partial)$ with smooth coefficients,
the case of tempered distributions in \cite{langenbruch1990} and of Fourier (ultra-)hyperfunctions 
in \cite{langenbruch2007, langenbruch2009}. For H\"ormander's spaces $B_{p,\kappa}^{loc}(\Omega)$ as $\mathcal{F}(\Omega)$ 
the problem was studied in \cite{v_hermanns2005}. The same problem for differential systems on distributions 
was considered in \cite{domanski_vogt2000} and on ultradifferentiable functions or ultradistributions in 
\cite{mb_hermanns2005}.

The conditions of Phragm\'en-Lindel\"of type were analysed in  
\cite{braun_meise_taylor2000b, 
braun_meise_taylor2000a, 
meise_taylor_vogt1989, 
meise_taylor_vogt1996_a, 
meise_taylor_vogt1997, 
meise_taylor_vogt1998} 
for spaces of $\mathcal{C}^{\infty}$-smooth functions or distributions, 
in 
\cite{braun1995, 
momm1994} 
for spaces of real analytic or ultradifferentiable functions of Roumieu type 
and in 
\cite{braun_meise_taylor2003, 
braun_meise_taylor2006, 
braun_meise_taylor2011}  
for ultradifferentiable functions or ultradistributions of Beurling type.
 
The necessary condition of surjectivity of the partial differential operator $P(\partial)$ was 
studied in many papers, e.g.\ in 
\cite{agranovich1961, frerick_kalmes2010, H2, malgrange1956, wengenroth2011} on $\mathcal{C}^{\infty}$-smooth functions 
and distributions, 
in 
\cite{braun_meise_taylor2001,  
hoermander1973,
langenbruch2000, langenbruch2004a, langenbruch2004b} 
on real analytic functions, 
in \cite{braun1993, cattabriga1981} on Gevrey classes, 
in \cite{braun_meise_vogt1989, braun_meise_vogt1994, langenbruch1996, langenbruch1998, meyer1997}
on ultradifferentiable functions of Roumieu type, 
in \cite{franken_meise1990} on ultradistributions of Beurling type, 
in \cite{bonet_galbis_meise1997, braun_meise_vogt1990} on ultradifferentiable functions and ultradistributions
and in \cite{larcher2014} on the multiplier space $\mathcal{O}_{M}$. 

However, if $P(\partial)\colon\mathcal{C}^{\infty}(\Omega)\to\mathcal{C}^{\infty}(\Omega)$, $\Omega\subset\R^{n}$ open, is elliptic, 
then $P(\partial)$ has a linear right inverse (by means of a Hamel basis of $\mathcal{C}^{\infty}(\Omega)$) 
and it has a continuous right inverse due to Michael's selection theorem \cite[Theorem 3.2'', p.\ 367]{michael1956} 
and \cite[Satz 9.28, p.\ 217]{Kaballo}, 
but $P(\partial)$ has no linear continuous right inverse if $n\geq 2$ by
a result of Grothendieck \cite[Theorem C.1, p.\ 109]{treves1967}. 
Nevertheless, the question of parameter dependence w.r.t.\ $E$ has a positive answer 
for several locally convex Hausdorff spaces $E$ due to 
tensor product techniques. In this case the question of parameter dependence obviously has a positive answer if
the topology of $E$ is stronger than the topology of pointwise convergence on $U$ and 
\[
P(\partial)^{E}\colon\mathcal{C}^{\infty}(\Omega,E)\to\mathcal{C}^{\infty}(\Omega,E)
\] 
is surjective where $\mathcal{C}^{\infty}(\Omega,E)$ is the space of $\mathcal{C}^{\infty}$-smooth $E$-valued functions on $\Omega$ 
and $P(\partial)^{E}$ the version of $P(\partial)$ for $E$-valued functions. 
If $E$ is complete, we have the topological isomorphy $\mathcal{C}^{\infty}(\Omega,E)\cong \mathcal{C}^{\infty}(\Omega)\varepsilon E$ 
where the latter space is Schwartz' $\varepsilon$-product. 
By Grothendieck's classical theory of tensor products \cite{Gro} the $\varepsilon$-product is topologically isomorphic 
to the completion of the projective tensor product $\mathcal{C}^{\infty}(\Omega)\widehat{\otimes}_{\pi} E$, 
implying $\mathcal{C}^{\infty}(\Omega,E)\cong\mathcal{C}^{\infty}(\Omega)\widehat{\otimes}_{\pi} E$, 
since $\mathcal{C}^{\infty}(\Omega)$ with its usual topology is a nuclear space. 
From this tensor product representation and the surjectivity of the elliptic operator $P(\partial)$ 
on the Fr\'echet space $\mathcal{C}^{\infty}(\Omega)$ follows the surjectivity of $P(\partial)^{E}$ by 
\cite[Satz 10.24, p.\ 255]{Kaballo} if $E$ is a Fr\'echet space. 
Hence the answer to the question of $\mathcal{C}^{k}$-smooth or holomorphic parameter dependence is 
affirmative but the case of distributional parameter dependence is not covered as $\mathcal{D}(U)_{b}'$ with the strong 
dual topology is not a Fr\'echet space.
However, the surjectivity result for $P(\partial)^{E}$ can even be extended beyond the class of 
Fr\'echet spaces $E$ due to the splitting theory of Vogt for Fr\'{e}chet spaces \cite{vogt1983, V1} and of 
Bonet and Doma\'nski for (PLS)-spaces \cite{bonetdomanski2006, Dom1}. 
Namely, we have that $P(\partial)^{E}$, $n\geq 2$, is surjective if $E:=F_{b}'$ where $F$ is a Fr\'{e}chet space 
satisfying the condition $(DN)$ by \cite[Theorem 2.6, p.\ 174]{vogt1983} or if $E$ is an ultrabornological 
(PLS)-space having the property $(PA)$ by \cite[Corollary 3.9, p.\ 1112]{D/L} since $\operatorname{ker}P(\partial)$ 
has the property $(\Omega)$ by \cite[Proposition 2.5 (b), p.\ 173]{vogt1983}. 
The latter result covers the case of distributional parameter dependence.

In general, Grothendieck's classical theory of tensor products can be applied if $P(\partial)$ is surjective and
$\mathcal{F}(\Omega)$ is a nuclear Fr\'echet space. If in addition $\operatorname{ker}P(\partial)$ 
has the property $(\Omega)$, the splitting theory of Vogt for Fr\'{e}chet spaces 
and of Bonet and Doma\'nski for (PLS)-spaces can be used. 
In the case that $\mathcal{F}(\Omega)$ is not a Fr\'echet space the question of surjectivity of $P(\partial)^{E}$ 
can still be handled. For (PLS)-spaces $\mathcal{F}(\Omega)$, e.g.\ (ultra-)distributions, 
one can apply the splitting theory of Bonet and Doma\'nski for (PLS)-spaces, and 
for (PLH)-spaces $\mathcal{F}(\Omega)$, e.g.\ $\mathcal{D}_{L^{2}}$ and $B_{2,\kappa}^{loc}(\Omega)$ 
which are non-(PLS)-spaces, the splitting theory of Dierolf and Sieg for (PLH)-spaces \cite{dierolf2014, dierolf_sieg2017} 
is available. 
For applications we refer the reader to the already mentioned papers 
\cite{bonetdomanski2006, Dom1, dierolf2014, dierolf_sieg2017, vogt1983, V1} 
as well as 
\cite{bonet_domanski2007, 
domanski2010, 
domanski2011} 
where $\mathcal{F}(\Omega)$ is the space of ultradistributions of Beurling type 
or of ultradifferentiable functions of Roumieu type and $E$, amongst others, the space of real analytic functions 
and to \cite{kalmes2018} where $\mathcal{F}(\Omega)$ is the space of $\mathcal{C}^{\infty}$-smooth functions or 
distributions. 

Notably, the preceding results imply that the inhomogeneous Cauchy-Riemann equation 
with a right-hand side 
$f\in\mathcal{E}(\Omega,E):=\mathcal{C}^{\infty}(\Omega,E)$, where $\Omega\subset\R^{2}$ is open 
and $E$ a locally convex Hausdorff space over $\C$ whose topology is induced by a system of seminorms
$(p_{\alpha})_{\alpha\in\mathfrak{A}}$, given by 
\begin{equation}\label{eq:CR_non-weighted}
\overline{\partial}^{E}u:=(1/2)(\partial_{1}^{E}+i\partial_{2}^{E})u=f
\end{equation}
has a solution $u\in\mathcal{E}(\Omega,E)$ if $E$ is a Fr\'echet space or $E:=F_{b}'$ where $F$ is a Fr\'{e}chet space 
satisfying the condition $(DN)$ or if $E$ is an ultrabornological (PLS)-space having the property $(PA)$. 
Among these spaces $E$ are several spaces of distributions like $\mathcal{D}(U)'$, the space of tempered distributions, 
the space of ultradistributions of Beurling type etc.
In the present paper we study this problem under the constraint that the 
right-hand side $f$ fulfils additional growth conditions 
given by an increasing family of positive continuous functions 
$\mathcal{V}:=(\nu_{n})_{n\in\N}$ on an increasing sequence of open subsets $(\Omega_{n})_{n\in\N}$ 
of $\Omega$ with $\Omega=\bigcup_{n\in\N}\Omega_{n}$, namely,  
\[
|f|_{n,m,\alpha}:=\sup_{\substack{x\in \Omega_{n}\\ \beta\in\N^{2}_{0},\,|\beta|\leq m}}
p_{\alpha}\bigl((\partial^{\beta})^{E}f(x)\bigr)\nu_{n}(x)<\infty
\]
for every $n\in\N$, $m\in\N_{0}$ and $\alpha\in\mathfrak{A}$. 
Let us call the space of such functions $\mathcal{EV}(\Omega,E)$. Our interest is in 
conditions on $\mathcal{V}$ and $(\Omega_{n})_{n\in\N}$ such that there is a 
solution $u\in\mathcal{EV}(\Omega,E)$ of \eqref{eq:CR_non-weighted}, i.e.\ 
we search for conditions that guarantee the surjectivity of 
\[
 \overline{\partial}^{E}\colon \mathcal{EV}(\Omega,E)\to \mathcal{EV}(\Omega,E).
\]
From the previous considerations for the Cauchy-Riemann operator on the space of non-weighted 
$\mathcal{C}^{\infty}$-smooth functions our task is evident and a part of it is already done. 
The spaces $\mathcal{EV}(\Omega):=\mathcal{EV}(\Omega,\C)$ are Fr\'echet spaces by \cite[3.4 Proposition, p.\ 6]{kruse2018_2}, 
in \cite[3.1 Theorem, p.\ 12]{kruse2018_4} we derived conditions 
on the family of weights $\mathcal{V}$ and the sequence of sets $(\Omega_{n})_{n\in\N}$ 
such that $\mathcal{EV}(\Omega)$ becomes a nuclear space and in \cite[4.8 Theorem, p.\ 20]{kruse2018_5} such that 
$\overline{\partial}$ is surjective on $\mathcal{EV}(\Omega)$. 
Furthermore, we obtained the topological isomorphy $\mathcal{EV}(\Omega,E)\cong\mathcal{EV}(\Omega)\varepsilon E$ 
for complete $E$ in \cite[5.10 Example c), p.\ 24]{kruse2017}.
Therefore we already have a solution in the case that $E$ is Fr\'echet space at hand 
(see \cite[4.9 Corollary, p.\ 21]{kruse2018_5}). 
What remains to be done is to characterise conditions 
on the kernel $\operatorname{ker}\overline{\partial}$ in $\mathcal{EV}(\Omega)$
to have the property $(\Omega)$ which allow us to extend the surjectivity result beyond 
the class of Fr\'echet spaces $E$. Concerning the sequence $(\Omega_{n})_{n\in\N}$, 
we concentrate on the case that it is a sequence of strips along the real axis, i.e.\ 
$\Omega_{n}:=\{z\in\C\;|\;|\im(z)|<n\}$. 
The case that this sequence has holes along the real axis is treated in \cite{kruse2019_2}.

Let us briefly outline the content of our paper. In Section 2 we summarise the necessary definitions 
and preliminaries which are needed in the subsequent sections. 
The kernel $\operatorname{ker}\overline{\partial}$ is a projective limit and in Section 3 
we prove that it is weakly reduced under suitable assumptions on $\mathcal{V}$ and $(\Omega_{n})_{n\in\N}$ 
(see \prettyref{cor:weakly_reduced}). The weak reducibility is used in Section 4 to obtain 
property $(\Omega)$ for the kernel in the case that $(\Omega_{n})_{n\in\N}$ is a sequence of strips
along the real axis (see \prettyref{thm:omega_for_strips}, \prettyref{cor:omega_for_strips}). 
In our final Section 5 we use the preceding conditions on the weights $\mathcal{V}$ 
to deduce the surjectivity of the Cauchy-Riemann operator on $\mathcal{EV}(\Omega,E)$ 
for $E:=F_{b}'$ where $F$ is a Fr\'{e}chet space 
satisfying the condition $(DN)$ or an ultrabornological (PLS)-space $E$ having the property $(PA)$ 
(see \prettyref{thm:surj_CR_DN_PA}). In particular, we apply our results in the case that 
$(\Omega_{n})_{n\in\N}$ is a sequence of strips along the real axis
(see \prettyref{cor:surj_CR_DN_PA}) and 
for example $\nu_{n}(z):=\exp(a_{n}|\re(z)|^{\gamma})$ for some $0<\gamma\leq 1$ and $a_{n}\nearrow 0$ 
(see \prettyref{cor:surj_CR_DN_PA_exa}).
\section{Notation and Preliminaries}
The notation and preliminaries are essentially the same as in \cite[Section 2]{kruse2017, kruse2018_5}.
We define the distance of two subsets $M_{0}, M_{1} \subset\R^{2}$ w.r.t.\ a norm $\|\cdot\|$ on $\R^{2}$ via
\[
  \d^{\|\cdot\|}(M_{0},M_{1}) 
:=\begin{cases}
   \inf_{x\in M_{0},\,y\in M_{1}}\|x-y\| &,\;  M_{0},\,M_{1} \neq \emptyset, \\
   \infty &,\;  M_{0}= \emptyset \;\text{or}\; M_{1}=\emptyset.
  \end{cases}
\]
Moreover, we denote by $\|\cdot\|_{\infty}$ the sup-norm, by $|\cdot|$ the Euclidean norm on $\R^{2}$, by
$\mathbb{B}_{r}(x):=\{w\in\R^{2}\;|\;|w-x|<r\}$ the Euclidean ball around $x\in\R^{2}$ with radius $r>0$ and 
identify $\R^{2}$ and $\C$ as (normed) vector spaces.
We denote the complement of a subset $M\subset \R^{2}$ by $M^{C}:= \R^{2}\setminus M$, 
the closure of $M$ by $\overline{M}$ and the boundary of $M$ by $\partial M$.
For a function $f\colon M\to\C$ and $K\subset M$ we denote by $f_{\mid K}$ the restriction of $f$ to $K$ and by 
\[
 \|f\|_{K}:=\sup_{x\in K}|f(x)|
\]
the sup-norm on $K$. By $L^{1}(\Omega)$ we denote the space of (equivalence classes of) 
$\C$-valued Lebesgue integrable functions on a measurable set $\Omega\subset\R^{2}$ 
and by $L^{q}(\Omega)$, $q\in\N$, the space of functions $f$ such that $f^{q}\in L^{1}(\Omega)$. 

By $E$ we always denote a non-trivial locally convex Hausdorff space over the field 
$\C$ equipped with a directed fundamental system of seminorms $(p_{\alpha})_{\alpha\in \mathfrak{A}}$. 
If $E=\C$, then we set $(p_{\alpha})_{\alpha\in \mathfrak{A}}:=\{|\cdot|\}$.
Further, we denote by $L(F,E)$ the space of continuous linear maps from 
a locally convex Hausdorff space $F$ to $E$ and sometimes write $\langle T,f\rangle:=T(f)$, $f\in F$, for $T\in L(F,E)$. 
If $E=\C$, we write $F':=L(F,\C)$ for the dual space of $F$. 
If $F$ and $E$ are (linearly topologically) isomorphic, we write $F\cong E$.
We denote by $L_{t}(F,E)$ the 
space $L(F,E)$ equipped with the locally convex topology of uniform convergence 
on the finite subsets of $F$ if $t=\sigma$, on the precompact subsets of $F$ if $t=\gamma$, 
on the absolutely convex, compact subsets of $F$ if $t=\kappa$ and on the bounded subsets of $F$ if $t=b$.\\
The so-called $\varepsilon$-product of Schwartz is defined by 
\begin{equation}\label{notation0}
F\varepsilon E:=L_{e}(F_{\kappa}',E)
\end{equation}
where $L(F_{\kappa}',E)$ is equipped with the topology of uniform convergence on equicontinuous subsets of $F'$. 
This definition of the $\varepsilon$-product coincides with the original 
one by Schwartz \cite[Chap.\ I, \S1, D\'{e}finition, p.\ 18]{Sch1}. 

We recall the following well-known definitions concerning continuous partial differentiability of 
vector-valued functions (c.f.\ \cite[p.\ 4]{kruse2018_2}). A function $f\colon\Omega\to E$ on an open set 
$\Omega\subset\mathbb{R}^{2}$ to $E$ is called continuously partially differentiable ($f$ is $\mathcal{C}^{1}$) 
if for the $n$-th unit vector $e_{n}\in\mathbb{R}^{2}$ the limit
\[
(\partial^{e_{n}})^{E}f(x):=(\partial_{n})^{E}f(x)
:=\lim_{\substack{h\to 0\\ h\in\mathbb{R}, h\neq 0}}\frac{f(x+he_{n})-f(x)}{h}
\]
exists in $E$ for every $x\in\Omega$ and $(\partial^{e_{n}})^{E}f$ 
is continuous on $\Omega$ ($(\partial^{e_{n}})^{E}f$ is $\mathcal{C}^{0}$) for every $n\in\{1,2\}$. 
For $k\in\mathbb{N}$ a function $f$ is said to be $k$-times continuously partially differentiable 
($f$ is $\mathcal{C}^{k}$) if $f$ is $\mathcal{C}^{1}$ and all its first partial derivatives are $\mathcal{C}^{k-1}$.
A function $f$ is called infinitely continuously partially differentiable ($f$ is $\mathcal{C}^{\infty}$) 
if $f$ is $\mathcal{C}^{k}$ for every $k\in\mathbb{N}$.
The linear space of all functions $f\colon\Omega\to E$ which are $\mathcal{C}^{\infty}$ 
is denoted by $\mathcal{C}^{\infty}(\Omega,E)$. 
Let $f\in\mathcal{C}^{\infty}(\Omega,E)$. For $\beta=(\beta_{n})\in\mathbb{N}_{0}^{2}$ we set 
$(\partial^{\beta_{n}})^{E}f:=f$ if $\beta_{n}=0$, and
\[
(\partial^{\beta_{n}})^{E}f
:=\underbrace{(\partial^{e_{n}})^{E}\cdots(\partial^{e_{n}})^{E}}_{\beta_{n}\text{-times}}f
\]
if $\beta_{n}\neq 0$ as well as 
\[
(\partial^{\beta})^{E}f
:=(\partial^{\beta_{1}})^{E}(\partial^{\beta_{2}})^{E}f.
\]
Due to the vector-valued version of Schwarz' theorem $(\partial^{\beta})^{E}f$ is independent of the order of the partial 
derivatives on the right-hand side, we call $|\beta|:=\beta_{1}+\beta_{2}$ the order of differentiation 
and write $\partial^{\beta}f:=(\partial^{\beta})^{\C}f$. 

A function $f\colon\Omega\to E$ on an open set 
$\Omega\subset\mathbb{C}$ to $E$ is called holomorphic if the limit
\[
\bigl(\frac{\partial}{\partial z}\bigr)^{E}f(z_{0})
:=\lim_{\substack{h\to 0\\ h\in\mathbb{C}, h\neq 0}}\frac{f(z_{0}+h)-f(z_{0})}{h}
\]
exists in $E$ for every $z_{0}\in\Omega$ and the space of such functions is denoted by $\mathcal{O}(\Omega,E)$. 
The exact definition of the spaces from the introduction is as follows. 

\begin{defn}[{\cite[3.1 Definition, p.\ 5]{kruse2018_2}}]\label{def:smooth_weighted_space}
Let $\Omega\subset\R^{2}$ be open and $(\Omega_{n})_{n\in\N}$ a family of non-empty
open sets such that $\Omega_{n}\subset\Omega_{n+1}$ and $\Omega=\bigcup_{n\in\N} \Omega_{n}$.
Let $\mathcal{V}:=(\nu_{n})_{n\in\N}$ be a countable family of positive continuous functions 
$\nu_{n}\colon \Omega \to (0,\infty)$ such that $\nu_{n}\leq\nu_{n+1}$ for all $n\in\N$.
We call $\mathcal{V}$ a directed family of continuous weights on $\Omega$ and set for $n\in\N$ 
\begin{enumerate}
 \item [a)]
 \[
\mathcal{E}\nu_{n}(\Omega_{n}, E):= \{ f \in \mathcal{C}^{\infty}(\Omega_{n}, E)\; | \;
\forall\;\alpha\in\mathfrak{A},\,m \in \N_{0}^{2}:\; |f|_{n,m,\alpha} < \infty \}
\]
and 
\[
\mathcal{EV}(\Omega, E):=\{ f\in \mathcal{C}^{\infty}(\Omega, E)\; | \;\forall\; n \in \N:
\; f_{\mid\Omega_{n}}\in \mathcal{E}\nu_{n}(\Omega_{n}, E)\}
\]
where
\[
|f|_{n,m,\alpha}:=\sup_{\substack{x \in \Omega_{n}\\ \beta \in \N_{0}^{2}, \, |\beta| \leq m}}
p_{\alpha}\bigl((\partial^{\beta})^{E}f(x)\bigr)\nu_{n}(x).
\]
\item [b)] 
\[
\mathcal{E}\nu_{n,\overline{\partial}}(\Omega_{n},E):= \{ f \in \mathcal{E}\nu_{n}(\Omega_{n}, E)\; | \;
f\in\operatorname{ker}\overline{\partial}^{E} \}
\]
and 
\[
\mathcal{EV}_{\overline{\partial}}(\Omega, E):=\{f\in\mathcal{EV}(\Omega, E)\;|\;
f\in\operatorname{ker}\overline{\partial}^{E}\}.
\]
\item [c)] 
\[
\mathcal{O}\nu_{n}(\Omega_{n},E):= \{ f \in \mathcal{O}(\Omega_{n}, E)\; | \;
\forall\;\alpha\in\mathfrak{A}:\;|f|_{n,\alpha} < \infty  \}
\]
and 
\[
\mathcal{OV}(\Omega, E):=\{ f\in \mathcal{O}(\Omega, E)\; | \;\forall\; n \in \N:
\; f_{\mid\Omega_{n}}\in \mathcal{O}\nu_{n}(\Omega_{n}, E)\}
\]
where 
\[
|f|_{n,\alpha}:=\sup_{\substack{x \in \Omega_{n}}}
p_{\alpha}(f(x))\nu_{n}(x).
\]
\end{enumerate}
The subscript $\alpha$ in the notation of the seminorms is omitted in the $\C$-valued case. 
The letter $E$ is omitted in the case $E=\C$ as well, e.g.\ we write
$\mathcal{E}\nu_{n}(\Omega_{n}):=\mathcal{E}\nu_{n}(\Omega_{n},\C)$ 
and $\mathcal{EV}(\Omega):=\mathcal{EV}(\Omega,\C)$ .
\end{defn}

The spaces $\mathcal{FV}(\Omega,E)$, $\mathcal{F}=\mathcal{E}$, $\mathcal{O}$, are projective limits, namely, we have
\[
\mathcal{FV}(\Omega, E)\cong \lim_{\substack{\longleftarrow\\n\in \N}}\mathcal{F}\nu_{n}(\Omega_{n}, E)
\]
where the spectral maps are given by the restrictions
\[
\pi_{k,n}\colon \mathcal{F}\nu_{k}(\Omega_{k}, E)\to \mathcal{F}\nu_{n}(\Omega_{n}, E),\;
f\mapsto f_{\mid\Omega_{n}},\;k\geq n.
\]

\section{Weak reducibility of $\mathcal{OV}(\Omega)$}
The goal of this section is to show that the projective limit $\mathcal{OV}(\Omega)$ is weakly reduced 
under suitable assumptions, i.e.\ for every $n\in\N$ there is $m\in\N$ such that 
$\mathcal{OV}(\Omega)$ is dense in $\mathcal{O}\nu_{m}(\Omega_{m})$
w.r.t.\ the topology of $\mathcal{O}\nu_{n}(\Omega_{n})$. First, we show that $\mathcal{OV}(\Omega)$ and 
$\mathcal{EV}_{\overline{\partial}}(\Omega)$ coincide topologically under mild assumptions 
on weights $\mathcal{V}$ and the sequence of sets $(\Omega_{n})$. 
Then we use a similar result for $\mathcal{EV}_{\overline{\partial}}(\Omega)$ which was obtained in \cite{kruse2018_5} 
to prove the weak reducibility of $\mathcal{OV}(\Omega)$. For corresponding results in the case that 
$\Omega_{n}=\Omega$ for all $n\in\N$ see \cite[Theorem 3, p.\ 56]{Epifanov1992}, \cite[1.3 Lemma, p.\ 418]{Langenbruch1994} 
and \cite[Theorem 1, p.\ 145]{Polyakova2017}.

\begin{cond}[{\cite[3.3 Condition, p.\ 7]{kruse2018_5}}]\label{cond:weights}
Let $\mathcal{V}:=(\nu_{n})_{n\in\N}$ be a directed family of continuous weights on an open set $\Omega\subset\R^{2}$ 
and $(\Omega_{n})_{n\in\N}$ a family of non-empty open sets such that $\Omega_{n}\subset\Omega_{n+1}$
and $\Omega=\bigcup_{n\in\N} \Omega_{n}$.
For every $k\in\N$ let there be $\rho_{k}\in \R$ such that $0<\rho_{k}<\d^{\|\cdot\|_{\infty}}(\{x\},\partial\Omega_{k+1})$
for all $x\in\Omega_{k}$ and let there be $q\in\N$ such that for any $n\in\N$ there is 
$\psi_{n}\in L^{q}(\Omega_{k})$, $\psi_{n}>0$, and $J_{i}(n)\geq n$ and $C_{i}(n)>0$ such that for any $x\in\Omega_{k}$:
\begin{enumerate}
  \item [$(\omega.1)\phantom{^{q}}$] $\sup_{\zeta\in\R^{2},\,\|\zeta\|_{\infty}\leq \rho_{k}}\nu_{n}(x+\zeta)
  \leq C_{1}(n)\inf_{\zeta\in\R^{2},\,\|\zeta\|_{\infty}\leq \rho_{k}}\nu_{J_{1}(n)}(x+\zeta)$
  \item [$(\omega.2)^{q}$] $\nu_{n}(x)\leq C_{2}(n)\psi_{n}(x)\nu_{J_{2}(n)}(x)$
\end{enumerate}
\end{cond}

\begin{exa}[{\cite[3.7 Example, p.\ 9]{kruse2018_5}}]\label{ex:weights_little_omega}
Let $\Omega\subset\R^{2}$ be open and $(\Omega_{n})_{n\in\N}$ a family of non-empty open sets such that
\begin{enumerate}
 \item [(i)] $\Omega_{n}:=\R^{2}$ for every $n\in\N$.
 \item [(ii)] $\Omega_{n}\subset\Omega_{n+1}$ and $\d^{\|\cdot\|}(\Omega_{n},\partial\Omega_{n+1})>0$ for every $n\in\N$.
 \item [(iii)] $\Omega_{n}:=\{x=(x_{i})\in \Omega\;|\;\forall\;i\in I:\;|x_{i}|<n+N\;\text{and}\;
 \d^{\|\cdot\|}(\{x\},\partial \Omega)>1/(n+N) \}$ where $I\subset\{1,2\}$, $\partial \Omega\neq\varnothing$ and $N\in\N_{0}$ 
 is big enough.
 \item [(iv)] $\Omega_{n}:=\{x=(x_{i})\in \Omega\;|\;\forall\;i\in I:\;|x_{i}|<n\}$ where $I\subset\{1,2\}$ and $\Omega:=\R^{2}$.
 \item [(v)] $\Omega_{n}:=\mathring K_{n}$ where $K_{n}\subset\mathring K_{n+1}$, $\mathring{K}_{n}\neq\varnothing$, 
 is a compact exhaustion of $\Omega$.
\end{enumerate}
Let $(a_{n})_{n\in\N}$ be strictly increasing such that $a_{n}\geq 0$ for all $n\in\N$ or 
$a_{n}\leq 0$ for all $n\in\N$. The family $\mathcal{V}:=(\nu_{n})_{n\in\N}$ of positive continuous functions on $\Omega$ given by 
\[
 \nu_{n}\colon\Omega\to (0,\infty),\;\nu_{n}(x):=e^{a_{n}\mu(x)},
\]
with some function $\mu\colon\Omega\to[0,\infty)$ fulfils $\nu_{n}\leq\nu_{n+1}$ for all $n\in\N$ and
\prettyref{cond:weights} for every $q\in\N$ with $\psi_{n}(x):= (1+|x|^{2})^{-2}$, $x\in\R^{2}$, for every $n\in\N$ if
\begin{enumerate}
  \item [a)] there is some $0<\gamma\leq 1$ such that
  $\mu(x)=|(x_{i})_{i\in I_{0}}|^{\gamma}$, $x=(x_{1},x_{2})\in\Omega$, where $I_{0}:=\{1,2\}\setminus I$ with $I\subsetneq\{1,2\}$ and 
  $(\Omega_{n})_{n\in\N}$ from (iii) or (iv).
  \item [b)] $\lim_{n\to\infty}a_{n}=\infty$ or $\lim_{n\to\infty}a_{n}=0$ and there is some 
  $m\in\N$, $m\leq 5$, such that $\mu(x)=|x|^{m}$, $x\in\Omega$, with $(\Omega_{n})_{n\in\N}$ from (i) or (ii).
  \item [c)] $a_{n}=n/2$ for all $n\in\N$ and $\mu(x)=\ln(1+|x|^{2})$, $x\in\R^{2}$, with $(\Omega_{n})_{n\in\N}$ from (i).
  \item [d)] $\mu(x)=0$, $x\in\Omega$, with $(\Omega_{n})_{n\in\N}$ from (v).
\end{enumerate}
\end{exa}

In this section we only need property $(\omega.1)$. 
 
\begin{prop}\label{prop:equiv_seminorms}
Let $\mathcal{V}:=(\nu_{n})_{n\in\N}$ be a directed family of continuous weights on an open set $\Omega\subset\R^{2}$ 
and $(\Omega_{n})_{n\in\N}$ a family of non-empty open sets such that $\Omega_{n}\subset\Omega_{n+1}$
and $\Omega=\bigcup_{n\in\N} \Omega_{n}$. If $(\omega.1)$ is fulfilled, then
\begin{enumerate}
\item [a)] for every $n\in\N$ and $m\in\N_{0}$ there is $C>0$ such that 
\[
|f|_{n,m}\leq C|f|_{2J_{1}(n)},\quad f\in \mathcal{O}\nu_{2J_{1}(n)}(\Omega_{2J_{1}(n)}).
\]
\item [b)] $\mathcal{EV}_{\overline{\partial}}(\Omega)=\mathcal{OV}(\Omega)$ as Fr\'echet spaces.
\end{enumerate}
\end{prop}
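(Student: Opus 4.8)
The plan is to establish part a) by a Cauchy-estimate argument and then read off part b) as an essentially formal consequence. The crucial input for a) is that $f$ is \emph{holomorphic}, so each partial derivative is a complex derivative up to a unimodular factor: writing $\beta=(\beta_{1},\beta_{2})$ one has $\partial^{\beta}f=i^{\beta_{2}}f^{(|\beta|)}$ and hence $|\partial^{\beta}f|=|f^{(|\beta|)}|$. Fix $n\in\N$, $m\in\N_{0}$ and $f\in\mathcal{O}\nu_{2J_{1}(n)}(\Omega_{2J_{1}(n)})$. For $x\in\Omega_{n}$ the defining property of $\rho_{n}$ ensures that every $\zeta$ with $\|\zeta\|_{\infty}\leq\rho_{n}$ satisfies $x+\zeta\in\Omega_{n+1}$; since points of the Euclidean circle $|\zeta|=\rho_{n}$ obey $\|\zeta\|_{\infty}\leq|\zeta|=\rho_{n}$, the closed disc $\overline{\mathbb{B}_{\rho_{n}}(x)}$ lies in $\Omega_{n+1}\subset\Omega_{2J_{1}(n)}$, where $f$ is holomorphic. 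First I would apply Cauchy's integral formula for derivatives to get, for all $|\beta|\leq m$,
\[
|\partial^{\beta}f(x)|=|f^{(|\beta|)}(x)|\leq\frac{|\beta|!}{\rho_{n}^{|\beta|}}\sup_{|\zeta|=\rho_{n}}|f(x+\zeta)|\leq C'\sup_{|\zeta|=\rho_{n}}|f(x+\zeta)|,
\]
where $C':=\max_{0\leq k\leq m}k!/\rho_{n}^{k}$ depends only on $n$ and $m$.

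The next step is to attach the weight and push it onto the shifted points via $(\omega.1)$. Taking $k=n$ and the value $\zeta=0$ inside the supremum on the left of $(\omega.1)$, one obtains $\nu_{n}(x)\leq C_{1}(n)\,\nu_{J_{1}(n)}(x+\zeta)$ for every $\zeta$ with $\|\zeta\|_{\infty}\leq\rho_{n}$, in particular on the circle $|\zeta|=\rho_{n}$. Because the weights increase and $J_{1}(n)\leq 2J_{1}(n)$, and because $x+\zeta\in\Omega_{n+1}\subset\Omega_{2J_{1}(n)}$ (here $n+1\leq 2J_{1}(n)$ since $J_{1}(n)\geq n\geq 1$), this gives $\nu_{n}(x)\leq C_{1}(n)\,\nu_{2J_{1}(n)}(x+\zeta)$. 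Multiplying the Cauchy estimate by $\nu_{n}(x)$ then yields
\[
|\partial^{\beta}f(x)|\,\nu_{n}(x)\leq C'C_{1}(n)\sup_{|\zeta|=\rho_{n}}|f(x+\zeta)|\,\nu_{2J_{1}(n)}(x+\zeta)\leq C'C_{1}(n)\,|f|_{2J_{1}(n)},
\]
and taking the supremum over $x\in\Omega_{n}$ and $|\beta|\leq m$ proves a) with $C:=C'C_{1}(n)$.

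For b) I would first verify equality as sets: both spaces consist of holomorphic functions, since membership in either forces $\overline{\partial}f=0$. If $f\in\mathcal{EV}_{\overline{\partial}}(\Omega)$ then $|f|_{n}=|f|_{n,0}\leq|f|_{n,m}<\infty$ for every $n$, so $f\in\mathcal{OV}(\Omega)$; conversely, if $f\in\mathcal{OV}(\Omega)$ then a) gives $|f|_{n,m}\leq C|f|_{2J_{1}(n)}<\infty$ for all $n,m$, so $f\in\mathcal{EV}_{\overline{\partial}}(\Omega)$. The topological identity is then immediate from the two seminorm inequalities: $|f|_{n}=|f|_{n,0}\leq|f|_{n,m}$ shows the $\mathcal{OV}$-topology is coarser, while a) shows each $\mathcal{E}$-seminorm $|\cdot|_{n,m}$ is dominated by the $\mathcal{O}$-seminorm $|\cdot|_{2J_{1}(n)}$; hence the two systems of seminorms are equivalent and the topologies coincide. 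That both are Fréchet spaces follows since $\mathcal{EV}(\Omega)$ is Fréchet and $\mathcal{EV}_{\overline{\partial}}(\Omega)$ is the kernel of the continuous operator $\overline{\partial}$, hence a closed subspace.

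The heart of the argument, and the step I expect to be the main obstacle, is the index bookkeeping in a): one must arrange that the Cauchy circle lands inside the set $\Omega_{2J_{1}(n)}$ carrying the target seminorm while \emph{simultaneously} using $(\omega.1)$ to transfer the weight $\nu_{n}(x)$ to $\nu_{2J_{1}(n)}(x+\zeta)$ at the shifted points. The doubling to $2J_{1}(n)$ is precisely what supplies enough room to absorb both requirements — the geometric shift, which needs $n+1\leq 2J_{1}(n)$, and the weight comparison, which needs $J_{1}(n)\leq 2J_{1}(n)$ — within a single index. Once a) is in place, the Cauchy estimate itself and the passage to topologies in b) are routine.
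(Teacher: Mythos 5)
Your proposal is correct and takes essentially the same route as the paper's proof: part a) via Cauchy's inequality on the Euclidean disc $\overline{\mathbb{B}_{\rho_{n}}(x)}\subset\Omega_{n+1}\subset\Omega_{2J_{1}(n)}$ combined with the weight transfer from $(\omega.1)$ (applied with $k=n$ and $\zeta=0$ on the left-hand side), and part b) as a formal consequence of the resulting seminorm equivalence together with the closedness of $\operatorname{ker}\overline{\partial}$ in the Fr\'echet space $\mathcal{EV}(\Omega)$. The only cosmetic difference is that you upgrade $\nu_{J_{1}(n)}$ to $\nu_{2J_{1}(n)}$ at the shifted points before taking suprema, whereas the paper bounds $\sup_{\zeta\in\Omega_{n+1}}|f(\zeta)|\nu_{J_{1}(n)}(\zeta)$ by $|f|_{2J_{1}(n)}$ in the final step; these are the same estimate.
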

\begin{proof}
$a)$ Let $n\in\N$ and $m\in\N_{0}$. First, we note that $\Omega_{n+1}\subset\Omega_{2J_{1}(n)}$ and 
$\partial^{\beta}f(x)=i^{\beta_{2}}f^{(|\beta|)}(x)$, $x\in\Omega_{2J_{1}(n)}$, holds 
for all $\beta=(\beta_{1},\beta_{2})\in\N_{0}^{2}$ and $f\in\mathcal{O}\nu_{2J_{1}(n)}(\Omega_{2J_{1}(n)})$ 
where $f^{(|\beta|)}$ is the $|\beta|$th complex derivative of $f$.
Then we obtain via $(\omega.1)$ and Cauchy's inequality 
\begin{align*}
|f|_{n,m}&=\sup_{\substack{x \in \Omega_{n}\\ \beta \in \N_{0}^{2}, \, |\beta| \leq m}}
|\partial^{\beta}f(x)|\nu_{n}(x)
\leq \sup_{\substack{x \in \Omega_{n}\\ \beta \in \N_{0}^{2}, \, |\beta| \leq m}}
\frac{|\beta|!}{\rho_{n}^{|\beta|}}\max_{\substack{\zeta\in\R^{2}\\|\zeta-x|=\rho_{n}}}|f(\zeta)|\nu_{n}(x)\\
&\underset{\mathclap{(\omega.1)}}{\leq} C_{1}\sup_{\substack{x \in \Omega_{n}\\ \beta \in \N_{0}^{2}, \, |\beta| \leq m}}
\frac{|\beta|!}{\rho_{n}^{|\beta|}}\max_{\substack{\zeta\in\R^{2}\\|\zeta-x|=\rho_{n}}}|f(\zeta)|\nu_{J_{1}(n)}(\zeta)\\
&\leq C_{1}\sup_{\beta \in \N_{0}^{2}, \, |\beta| \leq m}
\frac{|\beta|!}{\rho_{n}^{|\beta|}}\sup_{\zeta \in \Omega_{n+1}}|f(\zeta)|\nu_{J_{1}(n)}(\zeta)
\leq C_{1}\sup_{\beta \in \N_{0}^{2}, \, |\beta| \leq m}\frac{|\beta|!}{\rho_{n}^{|\beta|}}|f|_{2J_{1}(n)}.
\end{align*}

$b)$ The space $\mathcal{EV}_{\overline{\partial}}(\Omega)$ is a Fr\'echet space since it is a closed subspace of the 
Fr\'echet space $\mathcal{EV}(\Omega)$ by \cite[3.4 Proposition, p.\ 6]{kruse2018_2}. 
From part a) and $|f|_{n}=|f|_{n,0}$ for all $n\in\N$ and 
$f\in\mathcal{EV}_{\overline{\partial}}(\Omega)$ follows the statement.
\end{proof}

If in addition $(\omega.2)^{1}$ is fulfilled, then the space $\mathcal{EV}(\Omega)$ is nuclear
and thus its subspace $\mathcal{OV}(\Omega)$ as well which we need in our last section.
The following conditions guarantee a kind of weak reducibility of the projective limit $\mathcal{EV}(\Omega)$.

\begin{cond}[{\cite[4.2 Condition, p.\ 10]{kruse2018_5}}]\label{cond:dense}
Let $\mathcal{V}:=(\nu_{n})_{n\in\N}$ be a directed family of continuous weights on an open set
$\Omega\subset\R^{2}$ and $(\Omega_{n})_{n\in\N}$ a family of non-empty
open sets such that $\Omega_{n}\neq \R^{2}$, $\Omega_{n}\subset\Omega_{n+1}$ for all $n\in\N$, 
$\d_{n,k}:=\d^{|\cdot|}(\Omega_{n},\partial\Omega_{k})>0$ for all $n,k\in\N$, $k>n$,
and $\Omega=\bigcup_{n\in\N} \Omega_{n}$.\\
a) For every $n\in\N$ let there be $g_{n}\in\mathcal{O}(\C)$ with $g_{n}(0)=1$ and $I_{j}(n)> n$ such that
\begin{enumerate}
\item [(i)] for every $\varepsilon>0$ there is a compact set $K\subset \overline{\Omega}_{n}$ with 
$\nu_{n}(x)\leq\varepsilon\nu_{I_{1}(n)}(x)$ for all $x\in\Omega_{n}\setminus K$.
\item [(ii)] there is an open set $X_{I_{2}(n)}\subset\R^{2}\setminus \overline{\Omega}_{I_{2}(n)}$ such that
there are $R_{n},r_{n}\in\R$ with $0<2R_{n}<\d^{|\cdot|}(X_{I_{2}(n)},\Omega_{I_{2}(n)}):=\d_{X,I_{2}(n)}$ 
and $R_{n}<r_{n}<\d_{X,I_{2}(n)}-R_{n}$ as well as 
$A_{2}(\cdot,n)\colon X_{I_{2}(n)}+\mathbb{B}_{R_{n}}(0)\to (0,\infty)$, 
$A_{2}(\cdot,n)_{\mid X_{I_{2}(n)}}$ locally bounded, satisfying
\begin{equation}\label{pro.2}
\max\{|g_{n}(\zeta)|\nu_{I_{2}(n)}(z)\;|\;\zeta\in\R^{2},\,|\zeta-(z-x)|=r_{n}\}\leq A_{2}(x,n) 
\end{equation}
for all $z\in\Omega_{I_{2}(n)}$ and $x\in X_{I_{2}(n)}+\mathbb{B}_{R_{n}}(0)$.
\item [(iii)] for every compact set $K\subset \R^{2}$ there is $A_{3}(n,K)>0$  with
\[
\int_{K}{\frac{|g_{n}(x-y)|\nu_{n}(x)}{|x-y|}\d y}\leq A_{3}(n,K),\quad x\in \Omega_{n}.
\]
\end{enumerate}
b) Let a)(i) be fulfilled. For every $n\in\N$ let there be $I_{4}(n)>n$ and $A_{4}(n)>0$ such that
\begin{equation}\label{pro.4}
\int_{\Omega_{I_{4}(n)}}{\frac{|g_{I_{14}(n)}(x-y)|\nu_{p}(x)}{|x-y|\nu_{k}(y)}\d y}\leq A_{4}(n), \quad x\in \Omega_{p},
\end{equation}
for $(k,p)=(I_{4}(n),n)$ and $(k,p)=(I_{14}(n),I_{14}(n))$ where $I_{14}(n):=I_{1}(I_{4}(n))$.\\
c) Let a)(i)-(ii) and b) be fulfilled. For every $n\in\N$, every closed subset $M\subset \overline{\Omega}_{n}$ 
and every component $N$ of $M^{C}$ we have
\[
N\cap \overline{\Omega}_{n}^{C}\neq \varnothing\;\Rightarrow\; N\cap X_{I_{214}(n)}\neq \varnothing
\]
where $I_{214}(n):=I_{2}(I_{14}(n))$.
\end{cond}

We will see that $\Omega_{n}:=\{z\in\C\;|\;|\im(z)|<n\}$ and $\nu_{n}(z):=\exp(a_{n}|\re(z)|^{\gamma})$ for some $0<\gamma\leq 1$ and $a_{n}\nearrow 0$ or $a_{n}\nearrow \infty$ 
fulfil the conditions above with $g_{n}(z):=\exp(-z^2)$.

\begin{thm}[{\cite[4.3 Theorem, p.\ 10]{kruse2018_5}}]\label{thm:dense_proj_lim}
Let $n\in\N$. If \prettyref{cond:dense} is fulfilled, then
$\pi_{I_{214}(n),n}(\mathcal{E}\nu_{I_{214}(n),\overline{\partial}}(\Omega_{I_{214}(n)}))$ is 
dense in $\pi_{I_{14}(n),n}(\mathcal{E}\nu_{I_{14}(n),\overline{\partial}}(\Omega_{I_{14}(n)}))$ w.r.t.\
$(|\cdot|_{n,m})_{m\in\N_{0}}$ .
\end{thm}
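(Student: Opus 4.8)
The plan is to prove the statement by an explicit weighted Runge-type approximation built on a $g$-modified Cauchy transform, exploiting the crucial feature that $g_{I_{14}(n)}$ is \emph{entire} with $g_{I_{14}(n)}(0)=1$: since $\overline{\partial}\bigl(1/(\pi z)\bigr)=\delta_{0}$ and $g_{I_{14}(n)}$ is holomorphic, the kernel $z\mapsto g_{I_{14}(n)}(z)/(\pi z)$ is again a fundamental solution of $\overline{\partial}$, now carrying the decay of $g_{I_{14}(n)}$ along the real axis. Because both $f$ and the candidate approximants are holomorphic, interior Cauchy estimates (as in the proof of \prettyref{prop:equiv_seminorms}) let me pass from a single weighted sup-seminorm back to all $|\cdot|_{n,m}$, so after enlarging the index slightly it suffices to approximate $f$ in $|\cdot|_{n,0}$.

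First I would fix $f\in\mathcal{E}\nu_{I_{14}(n),\overline{\partial}}(\Omega_{I_{14}(n)})$ and $\varepsilon>0$ and choose $\chi\in\mathcal{C}^{\infty}(\R^{2})$ with $\chi\equiv 1$ on a neighbourhood of $\overline{\Omega}_{n}$ and $\operatorname{supp}\chi\subset\Omega_{I_{4}(n)}$, which is possible since $\d_{n,I_{4}(n)}>0$. Setting $\psi:=f\,\overline{\partial}\chi$, supported in the collar $\Omega_{I_{4}(n)}\setminus\overline{\Omega}_{n}$, and
\[
v(z):=\tfrac1\pi\int \frac{g_{I_{14}(n)}(z-\zeta)}{z-\zeta}\,\psi(\zeta)\,\d\zeta,\qquad h:=\chi f-v,
\]
the identities $\overline{\partial}v=\psi=\overline{\partial}(\chi f)$ together with $\overline{\partial}f=0$ give $\overline{\partial}h=0$, so $h$ is holomorphic, and on $\Omega_{n}$ (where $\chi\equiv 1$) the error equals $f-h=v$. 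Convergence of the defining integral over the \emph{unbounded} collar is guaranteed by the decay of $g_{I_{14}(n)}$ in the real direction against the weighted decay $|f(\zeta)|\le |f|_{I_{14}(n)}/\nu_{I_{14}(n)}(\zeta)\le |f|_{I_{14}(n)}/\nu_{I_{4}(n)}(\zeta)$.

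Next I would estimate $|v|_{n,0}$. Multiplying by $\nu_{n}(x)$ and inserting the bound on $|\psi|$ just mentioned, the $\nu_{n}$-weighted sup of the collar contribution over $\Omega_{n}$ is finite by \eqref{pro.4} in the case $(k,p)=(I_{4}(n),n)$, while the kernel bound a)(iii) controls the contribution of any compact source piece. To upgrade finiteness to \emph{smallness} I would invoke a)(i): off a compact set the ratio $\nu_{n}/\nu_{I_{1}(n)}$ is $<\varepsilon$, so the far part of the $\nu_{n}$-weighted estimate is dominated by $\varepsilon$ times a fixed $\nu_{I_{1}(n)}$-weighted quantity, and the remaining compact part is made small by a standard approximation. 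This yields $|f-h|_{n,0}\le C\varepsilon$ with $C$ independent of $\varepsilon$.

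Finally, and this is the \textbf{main obstacle}, I must show $h\in\mathcal{E}\nu_{I_{214}(n),\overline{\partial}}(\Omega_{I_{214}(n)})$, i.e.\ that $h$ has finite weighted sup-norm on the \emph{enlarged} strip $\Omega_{I_{214}(n)}$. On the inner part $\Omega_{I_{14}(n)}$ this follows from \eqref{pro.4} with $(k,p)=(I_{14}(n),I_{14}(n))$. On the outer shell $\Omega_{I_{214}(n)}\setminus\Omega_{I_{14}(n)}$ the Cauchy kernel is harmless, since the source sits in $\Omega_{I_{4}(n)}$ at positive distance, so everything reduces to controlling the growth of $|g_{I_{14}(n)}(z-\zeta)|\,\nu_{I_{214}(n)}(z)$; this is exactly what a)(ii) delivers, through a Cauchy estimate on circles of radius $r_{I_{14}(n)}$ anchored at points of $X_{I_{214}(n)}$. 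The purpose of the set $X_{I_{214}(n)}\subset\Omega_{I_{214}(n)}^{C}$ and of the Runge-type connectivity condition c) is to guarantee that this growth control is available on every component reaching beyond $\overline{\Omega}_{n}$, so that the singular behaviour is swept outside $\Omega_{I_{214}(n)}$ and $h$ genuinely lands in the target space. The delicate point throughout is achieving smallness on $\Omega_{n}$ and the weighted bound on $\Omega_{I_{214}(n)}$ \emph{simultaneously}: the decay of $g_{I_{14}(n)}$ must beat the growth of $\nu_{I_{214}(n)}$ uniformly, and it is precisely the interplay of the pointwise and integral conditions a)(ii), a)(iii) and \eqref{pro.4} with the geometric condition c) that makes this balance work.
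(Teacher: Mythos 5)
Your scheme has a structural gap at its centre: nothing in it makes the error small. Once the cutoff $\chi$ is fixed, $v$ is one fixed function, and the only control you have over it on $\Omega_{n}$ is \eqref{pro.4} with $(k,p)=(I_{4}(n),n)$, which yields $|f-h|_{n,0}=|v|_{n,0}\leq\pi^{-1}\|\overline{\partial}\chi\|_{\infty}A_{4}(n)\,|f|_{I_{14}(n),0}$ --- a fixed constant times $|f|_{I_{14}(n),0}$, with no parameter tending to zero (and none can be expected: for $g_{I_{14}(n)}(w)=e^{-w^{2}}$ the integrand is bounded below for $\re(\zeta)\approx\re(z)$, so $|v(z)|\nu_{n}(z)$ is genuinely of order one near $\partial\Omega_{n}$). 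Your repair does not close this. Condition a)(i) gives $|v(z)|\nu_{n}(z)\leq\varepsilon\,|v(z)|\nu_{I_{1}(n)}(z)$ only off a compact set $K$, and to exploit it you need $\sup_{\Omega_{n}}|v|\nu_{I_{1}(n)}<\infty$, i.e.\ \eqref{pro.4} for the pair $(k,p)=(I_{4}(n),I_{1}(n))$, which \prettyref{cond:dense} b) does not supply; on $K$ itself, $\sup_{K}|v|\nu_{n}$ is a fixed positive number, and making it small ``by a standard approximation'' would mean approximating the holomorphic function $v_{\mid\Omega_{n}}$ by elements of the target space --- precisely the density statement you are proving, so this step is circular. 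The same pattern sinks the membership claim: $h\in\mathcal{E}\nu_{I_{214}(n),\overline{\partial}}(\Omega_{I_{214}(n)})$ requires $\sup_{z\in\Omega_{I_{214}(n)}}|v(z)|\nu_{I_{214}(n)}(z)<\infty$, i.e.\ \eqref{pro.4} for $(k,p)=(I_{4}(n),I_{214}(n))$ or $(I_{14}(n),I_{214}(n))$, again not assumed; and a)(ii) cannot substitute for it, because \eqref{pro.2} bounds $|g_{I_{14}(n)}(\zeta)|\nu_{I_{214}(n)}(z)$ only for $\zeta$ on circles centred at $z-x$ with anchor $x\in X_{I_{214}(n)}+\mathbb{B}_{R_{I_{14}(n)}}(0)$; writing $z-\zeta$ in this form forces $|x-\zeta|=r_{I_{14}(n)}$, which is impossible since every such anchor satisfies $|x-\zeta|\geq\d_{X,I_{214}(n)}-R_{I_{14}(n)}>r_{I_{14}(n)}$ for all $\zeta\in\Omega_{I_{214}(n)}$, in particular for your source points in the collar. (A smaller but real further gap: your preliminary reduction of $(|\cdot|_{n,m})_{m\in\N_{0}}$ to $|\cdot|_{n,0}$ via Cauchy estimates needs the weight comparison $(\omega.1)$, which belongs to \prettyref{cond:weights} and is not a hypothesis of \prettyref{thm:dense_proj_lim}.)

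These mismatches are not accidental: the hypotheses of \prettyref{cond:dense} are shaped for a duality argument, and that is how the cited proof \cite[4.3 Theorem, p.\ 10]{kruse2018_5} runs --- no approximant is ever constructed. In outline, one takes a functional $T$ which is continuous w.r.t.\ some $|\cdot|_{n,m}$ and vanishes on $\pi_{I_{214}(n),n}(\mathcal{E}\nu_{I_{214}(n),\overline{\partial}}(\Omega_{I_{214}(n)}))$, represents it by measures supported in a closed set $M\subset\overline{\Omega}_{n}$, and studies the transform $\widehat{T}(x):=\langle T,g_{I_{14}(n)}(\cdot-x)/(\cdot-x)\rangle$. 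Condition a)(ii) is exactly what makes $\widehat{T}$ well defined and holomorphic on a neighbourhood of $X_{I_{214}(n)}$ (maximum modulus and Cauchy estimates over the circles of radius $r_{I_{14}(n)}$), the hypothesis on $T$ makes $\widehat{T}$ vanish there, and condition c) --- which quantifies over \emph{arbitrary} closed $M\subset\overline{\Omega}_{n}$ precisely because $M$ is the unknown support of a representing measure, a role it cannot play in a direct construction --- propagates $\widehat{T}=0$ by the identity theorem to every component of $M^{C}$ meeting $\overline{\Omega}_{n}^{C}$, in particular to the collar carrying $f\overline{\partial}\chi$. The modified Cauchy--Pompeiu formula (justified on the unbounded sets by a)(i), a)(iii) and \eqref{pro.4}) then represents $T(f)$ as an integral of $f\overline{\partial}\chi$ against $\widehat{T}$ over that collar, whence $T(f)=0$, and Hahn--Banach yields the density. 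In that argument the estimates your construction is missing are never needed; a constructive proof along your lines would require adding exactly those missing pairs in \eqref{pro.4} (and a condition of type $(\omega.1)$) to the hypotheses, i.e.\ proving a different theorem.
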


As a consequence of this theorem we obtain that the projective limit $\mathcal{OV}(\Omega)$ is weakly reduced 
which is a generalisation of \cite[5.6 Corollary, p.\ 69]{ich} and \cite[5.11 Corollary, p.\ 75]{ich}.

\begin{cor}\label{cor:weakly_reduced}
Let $n\in\N$. If \prettyref{cond:dense} with $I_{214}(k)\geq I_{14}(k+1)$ for all $k\in\N$ and $(\omega.1)$ hold, then 
$\pi_{n}(\mathcal{OV}(\Omega))$ is dense in $\pi_{n,2J_{1}I_{14}(n)}(\mathcal{O}\nu_{2J_{1}I_{14}(n)}(\Omega_{2J_{1}I_{14}(n)}))$ 
w.r.t.\ $|\cdot|_{n}$ where $J_{1}I_{14}(n):=J_{1}(I_{14}(n))$ and
\[
\pi_{n}\colon\mathcal{OV}(\Omega)\rightarrow\mathcal{O}\nu_{n}(\Omega_{n}),\;\pi_{n}(f):=f_{\mid\Omega_{n}}.
\]
\end{cor}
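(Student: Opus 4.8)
The plan is to reduce the statement to the single-level density provided by \prettyref{thm:dense_proj_lim} and then to chain it across all levels $\geq n$ by a Mittag-Leffler type successive approximation. Fix $n\in\N$, let $\varepsilon>0$ and let $g=G_{\mid\Omega_{n}}$ for some $G\in\mathcal{O}\nu_{2J_{1}I_{14}(n)}(\Omega_{2J_{1}I_{14}(n)})$; I must find $f\in\mathcal{OV}(\Omega)$ with $|f-g|_{n}<\varepsilon$. The first step is to realise $g$ as an element of the source space appearing in \prettyref{thm:dense_proj_lim}. Applying \prettyref{prop:equiv_seminorms} a) with $I_{14}(n)$ in place of $n$ gives for each $m\in\N_{0}$ a constant $C>0$ with $|G|_{I_{14}(n),m}\leq C|G|_{2J_{1}I_{14}(n)}<\infty$; since $G$ is holomorphic and therefore lies in $\operatorname{ker}\overline{\partial}$, this shows $F_{n}:=G_{\mid\Omega_{I_{14}(n)}}\in\mathcal{E}\nu_{I_{14}(n),\overline{\partial}}(\Omega_{I_{14}(n)})$, and clearly $F_{n\mid\Omega_{n}}=g$.

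The mechanism that drives the iteration is the hypothesis $I_{214}(k)\geq I_{14}(k+1)$. Whenever $h\in\mathcal{E}\nu_{I_{214}(k),\overline{\partial}}(\Omega_{I_{214}(k)})$, the inclusion $\Omega_{I_{14}(k+1)}\subset\Omega_{I_{214}(k)}$ together with $\nu_{I_{14}(k+1)}\leq\nu_{I_{214}(k)}$ yields $|h|_{I_{14}(k+1),m}\leq|h|_{I_{214}(k),m}<\infty$ for every $m$, so the restriction $h_{\mid\Omega_{I_{14}(k+1)}}$ lies in $\mathcal{E}\nu_{I_{14}(k+1),\overline{\partial}}(\Omega_{I_{14}(k+1)})$. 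Hence an output of \prettyref{thm:dense_proj_lim} at level $k$, which is defined on $\Omega_{I_{214}(k)}$, is an admissible input for \prettyref{thm:dense_proj_lim} at level $k+1$. I would therefore fix a summable sequence $\varepsilon_{j}>0$ with $\sum_{j\geq n}\varepsilon_{j}<\varepsilon$ and construct inductively, for $j\geq n$, functions $H_{j}\in\mathcal{E}\nu_{I_{214}(j),\overline{\partial}}(\Omega_{I_{214}(j)})$ such that $|H_{j}-F_{j}|_{j}<\varepsilon_{j}$, where $F_{j}:=H_{j-1\mid\Omega_{I_{14}(j)}}$ for $j>n$; here it suffices to use the case $m=0$ of the density in \prettyref{thm:dense_proj_lim}. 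Since $\Omega_{j+1}\subset\Omega_{I_{14}(j+1)}$, we have $F_{j+1}=H_{j}$ on $\Omega_{j+1}$, so that $|H_{j+1}-H_{j}|_{j+1}=|H_{j+1}-F_{j+1}|_{j+1}<\varepsilon_{j+1}$ for all $j\geq n$.

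It remains to pass to the limit, which I expect to be the main technical point. For a fixed $k$ and $j\geq k$ the monotonicity $\nu_{k}\leq\nu_{j+1}$ on $\Omega_{k}\subset\Omega_{j+1}$ gives $|H_{j+1}-H_{j}|_{k}\leq|H_{j+1}-H_{j}|_{j+1}<\varepsilon_{j+1}$, so $(H_{j\mid\Omega_{k}})_{j\geq k}$ is Cauchy in $|\cdot|_{k}$; as convergence in the weighted sup-norm forces locally uniform convergence, the limit $f$ is holomorphic on $\Omega$ and satisfies $|f|_{k}\leq|H_{k}|_{k}+\sum_{j\geq k}\varepsilon_{j+1}<\infty$ for every $k$, whence $f\in\mathcal{OV}(\Omega)$. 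Finally, telescoping on $\Omega_{n}$ and using the base estimate $|H_{n}-g|_{n}=|H_{n}-F_{n}|_{n}<\varepsilon_{n}$ gives $|f-g|_{n}\leq|H_{n}-g|_{n}+\sum_{j\geq n}|H_{j+1}-H_{j}|_{n}<\sum_{j\geq n}\varepsilon_{j}<\varepsilon$, which proves the asserted density. The delicate points to watch are that each $H_{j}$ genuinely remains in the kernel space at the correct level (guaranteed by $I_{214}(k)\geq I_{14}(k+1)$ and the weight monotonicity) and that the level-wise error control in $|\cdot|_{j+1}$ dominates every lower seminorm, so that holomorphy and the finiteness of all weighted sups survive in the limit.
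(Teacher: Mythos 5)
Your proposal is correct and follows essentially the same route as the paper's own proof: reduce the initial function to an element of $\mathcal{E}\nu_{I_{14}(n),\overline{\partial}}(\Omega_{I_{14}(n)})$ via \prettyref{prop:equiv_seminorms}~a), then chain \prettyref{thm:dense_proj_lim} across levels using $I_{214}(k)\geq I_{14}(k+1)$ with geometrically summable errors, and pass to the limit. Your limit argument (Cauchy in each $|\cdot|_{k}$, locally uniform convergence giving holomorphy, and the telescoping bound on $\Omega_{n}$) matches the paper's gluing of limits in the Banach spaces $\mathcal{O}\nu_{I_{14}(n+n_{0})}(\Omega_{I_{14}(n+n_{0})})$ in all essentials.
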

\begin{proof}
We omit the restriction maps in our proof. Due to \prettyref{prop:equiv_seminorms} a) 
the restrictions to $\Omega_{I_{14}(n)}$ of functions from
$\mathcal{O}\nu_{2J_{1}I_{14}(n)}(\Omega_{2J_{1}I_{14}(n)})$ are elements of
$\mathcal{E}\nu_{I_{14}(n),\overline{\partial}}(\Omega_{I_{14}(n)})$. 
Let $\varepsilon>0$ and $f_{0}\in\mathcal{O}\nu_{2J_{1}I_{14}(n)}(\Omega_{2J_{1}I_{14}(n)})$. 
For every $j\in\mathbb{N}$ there exists 
\begin{enumerate}
\item [(i)]  $f_{j}\in\mathcal{E}\nu_{I_{214}(n+j-1),\overline{\partial}}(\Omega_{I_{214}(n+j-1)})$ with
\item [(ii)] ${f_{j}}_{\mid\Omega_{I_{14}(n+j)}}\in\mathcal{E}\nu_{I_{14}(n+j),\overline{\partial}}(\Omega_{I_{14}(n+j)})
             \subset\mathcal{O}\nu_{I_{14}(n+j)}(\Omega_{I_{14}(n+j)})$
\end{enumerate}
such that
\begin{equation}\label{satz6.1}
|f_{j}-f_{j-1}|_{n+j-1}=|f_{j}-f_{j-1}|_{n+j-1,0}<\frac{\varepsilon}{2^{j+1}}
\end{equation}
by \prettyref{thm:dense_proj_lim} and the condition $I_{214}(k)\geq I_{14}(k+1)$ for all $k\in\N$. 
Therefore we obtain for every $k\in\mathbb{N}$
\begin{align}\label{satz6.2}
|f_{k}-f_{0}|_{n}&=\bigl|\sum^{k}_{j=1}{f_{j}-f_{j-1}}\bigr|_{n}
\leq\sum^{k}_{j=1}{|f_{j}-f_{j-1}|_{n}}
\leq\sum^{k}_{j=1}{|f_{j}-f_{j-1}|_{n+j-1}}\nonumber\\
&\underset{\mathclap{\eqref{satz6.1}}}{\leq}\sum^{k}_{j=1}{\frac{\varepsilon}{2^{j+1}}}
=\frac{\varepsilon}{2}\bigl(1-\frac{1}{2^{k}}\bigr)<\frac{\varepsilon}{2}.
\end{align}
Now, let $\varepsilon_{0}>0$ and $l\in\mathbb{N}$. We choose $l_{0}\in\mathbb{N}$, $l_{0}\geq l$, 
such that $\frac{\varepsilon}{2^{l_{0}+1}}<\varepsilon_{0}$. Similarly, we get for all $p\geq k\geq l_{0}$
\begin{align*}
|f_{p}-f_{k}|_{l}&\leq\bigl|f_{p}-f_{k}\bigr|_{l_{0}}
=\bigl|\sum^{p}_{j=k+1}{f_{j}-f_{j-1}}\bigr|_{l_{0}}
\leq\sum^{p}_{j=k+1}{\left|f_{j}-f_{j-1}\right|_{l_{0}}}\\
&\underset{\mathclap{\substack{l_{0}\leq k\leq j-1\\\phantom{l_{0}}<n+j-1}}}{\leq}\quad\;\sum^{p}_{j=k+1}{|f_{j}-f_{j-1}|_{n+j-1}}
\underset{\mathclap{\eqref{satz6.1}}}{\leq}\sum^{p}_{j=k+1}{\frac{\varepsilon}{2^{j+1}}}
=\frac{\varepsilon}{2}\bigl(\frac{1}{2^{k}}-\frac{1}{2^{p}}\bigr)\\
&<\frac{\varepsilon}{2^{k+1}}
\leq\frac{\varepsilon}{2^{l_{0}+1}}<\varepsilon_{0}.
\end{align*}
Hence $(f_{k})_{k\geq n_{0}}$ is a Cauchy sequence in the Banach space $\mathcal{O}\nu_{I_{14}(n+n_{0})}(\Omega_{I_{14}(n+n_{0})})$ 
for every $n_{0}\in\N_{0}$ and thus has a limit $F_{n_{0}}\in\mathcal{O}\nu_{I_{14}(n+n_{0})}(\Omega_{I_{14}(n+n_{0})})$. 
These limits coincide on their common domain because for every $n_{1},n_{2}\in\N_{0}$ with $I_{14}(n+n_{1})<I_{14}(n+n_{2})$ 
and $\varepsilon_{1}>0$ there exists $N\in\N$ such that for all $k\geq N$
\begin{align*}
|F_{n_{1}}-F_{n_{2}}|_{I_{14}(n+n_{1})}&\leq|F_{n_{1}}-f_{k}|_{I_{14}(n+n_{1})}+|f_{k}-F_{n_{2}}|_{I_{14}(n+n_{1})}\\
&\leq|F_{n_{1}}-f_{k}|_{I_{14}(n+n_{1})}+|f_{k}-F_{n_{2}}|_{I_{14}(n+n_{2})}
<\frac{\varepsilon_{1}}{2}+\frac{\varepsilon_{1}}{2}=\varepsilon_{1}.
\end{align*}
We deduce that the glued limit function $f$ given by $f:=F_{n_{0}}$ on $\Omega_{I_{14}(n+n_{0})}$ for all $n_{0}\in\N_{0}$ 
is well-defined and we have 
$f\in\bigcap_{n_{0}\in \N_{0}}\mathcal{O}\nu_{I_{14}(n+n_{0})}(\Omega_{I_{14}(n+n_{0})})
=\mathcal{OV}(\Omega)$ since $I_{14}(n+n_{0})\geq n+n_{0}$. 
By the definition of $f$ there exists $N\in\mathbb{N}$ such that for every $k\geq N$
\[
|f-f_{0}|_{n}\leq|f-f_{k}|_{n}+|f_{k}-f_{0}|_{n}
\underset{n\leq I_{14}(n+0)}{<}\frac{\varepsilon}{2}+|f_{k}-f_{0}|_{n}
\underset{\eqref{satz6.2}}{\leq}\frac{\varepsilon}{2}+\frac{\varepsilon}{2}=\varepsilon
\]
which proves our statement.
\end{proof}
\section{$(\Omega)$ for $\mathcal{OV}$-spaces on strips}
Using \prettyref{cor:weakly_reduced} and a decomposition theorem of Langenbruch, 
we prove that the space $\mathcal{OV}(\Omega)$ where the 
$\Omega_{n}$ are strips along the real axis satifies the property $(\Omega)$ of Vogt 
for suitable weights $\mathcal{V}$. 
Let us recall that a Fr\'echet space $F$ with an increasing fundamental system of 
seminorms $(\norm{3}{\cdot}_{k})_{k\in\N}$ satisfies $(\Omega)$ if
\begin{equation}\label{om1}
\forall\; p\in\N\; \exists\; q\in\N\;\forall\; k\in\N\;\exists\; n\in\N,\,C>0\;\forall\; r>0:\;
 U_{q}\subset Cr^n U_k + \frac{1}{r} U_p
\end{equation}
where $U_{k}:=\{x\in F \; | \; \norm{3}{x}_{k}\leq 1\}$ (see \cite[Chap.\ 29, Definition, p.\ 367]{meisevogt1997}). 
The weights we want to consider are generated by a function $\mu$ with the following properties.

\begin{defn}[{(strong) weight generator}]\label{def:weight_generator}
A continuous function $\mu\colon \C\to[0,\infty)$ is called a weight generator if $\mu(z)=\mu(|\re(z)|)$ for all $z\in\C$, 
the restriction $\mu_{\mid [0,\infty)}$ is strictly increasing, 
\[
 \lim_{\substack{x\to\infty\\x\in\R}}\frac{\ln(1+|x|)}{\mu(x)}=0
\]
and 
\[
 \exists\;\Gamma>1,\,C>0\;\forall\;x\in[0,\infty):\; \mu(x+1)\leq \Gamma \mu(x)+C.
\]
If $\mu$ is a weight generator which fulfils the stronger condition 
\[
 \exists\;\Gamma>1\;\forall\;n\in\N\;\exists\;C>0\;\forall\;x\in[0,\infty):\; \mu(x+n)\leq \Gamma \mu(x)+C,
\]
then $\mu$ is called a strong weight generator.
\end{defn}

Weight generators are introduced in \cite[Definition 2.1, p.\ 225]{L1} and strong weight generators in 
\cite[Definition 2.2.2, p.\ 43]{Toenjes} where they are simply called weight functions resp.\ strong weight functions.
For a weight generator $\mu$ we define the space 
\[
H_{\tau}(S_{t}):=\{f\in\mathcal{O}(S_{t})\;|\;\|f\|_{\tau,t}:=\sup_{z\in S_{t}}|f(z)|e^{\tau\mu(z)}<\infty\}
\]
for $t>0$ and $\tau\in\R$ with the strip $S_{t}:=\{z\in\C\;|\;|\im(z)|<t\}$ .

\begin{thm}[{\cite[Theorem 2.2, p.\ 225]{L1}}]\footnote{A superfluous constant depending on 
$\operatorname{sign}(\tau_{0})$ is omitted.}\label{thm:decomposition}
Let $\mu$ be a weight generator. There are $\widetilde{t}$, $K_{1}$, $K_{2}>0$ such that 
for any $\tau_{0}<\tau<\tau_{2}$ there is $C_{0}=C_{0}(\operatorname{sign}(\tau))$ 
such that for any $0<2t_{0}<t<t_{2}<\widetilde{t}$ with
\[
t_{0}\leq\min\Bigl[K_{1},K_{2}\sqrt{\frac{\tau-C_{0}\tau_{0}}{\tau_{2}-C_{0}\tau_{0}}}\Bigr]
\]
there is $C_{1}\geq 1$ such that for any $r\geq 0$ and any $f\in H_{\tau}(S_{t})$ with $\|f\|_{\tau,t}\leq 1$ the following holds: 
there are $f_{2}\in \mathcal{O}(S_{t_{2}})$ and $f_{0}\in \mathcal{O}(S_{t_{0}})$ such that $f=f_{0}+f_{2}$ on $S_{t_{0}}$ and
\[
\|f_{0}\|_{C_{0}\tau_{0},t_{0}}\leq C_{1}e^{-Gr}\quad\text{and}\quad\|f_{2}\|_{\tau_{2},t_{2}}\leq e^{r}
\]
where
\[
G:= K_{1} \min\Bigl[1,\frac{t-t_{0}}{2\widetilde{t}},\frac{\tau-C_{0}\tau_{0}}{\tau_{2}-C_{0}\tau_{0}}\Bigr].
\]
\end{thm}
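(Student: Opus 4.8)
The plan is to manufacture the splitting $f=f_0+f_2$ by a \emph{frequency cut-off} in the transform variable conjugate to $\re(z)$, with the cut-off level tied to the parameter $r$, so that the exponential trade-off between $\|f_0\|_{C_0\tau_0,t_0}\leq C_1 e^{-Gr}$ and $\|f_2\|_{\tau_2,t_2}\leq e^{r}$ arises from the competition between the strip half-widths $t_0<t<t_2$ (which govern the imaginary direction) and the weight exponents $\tau_0<\tau<\tau_2$ (which govern the real direction through $\mu$).

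First I would pass from $f$ to a spectral representation along the real axis. Since $f\in\mathcal{O}(S_t)$ with $\|f\|_{\tau,t}\le 1$, on each line $\im(z)=y$, $|y|<t$, we have $|f(x+iy)|\le e^{-\tau\mu(x)}$, and because the weight generator satisfies $\ln(1+|x|)/\mu(x)\to 0$ the traces define tempered objects whose Fourier--Laplace transform $\widehat f(\xi)$ encodes the holomorphy of $f$ as an exponential decay $e^{-t|\xi|}$ in $\xi$. I would then regularise by a Gaussian multiplier $e^{-s\xi^{2}}$, equivalently convolve $f$ with a heat kernel, and set $f_2$ to be the resulting low-frequency part and $f_0:=f-f_2$ the high-frequency remainder. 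The Gaussian is entire of order two, so $f_2$ extends holomorphically to the wider strip $S_{t_2}$; optimising the Gaussian width $s$ against the frequency scale balances a quadratic exponent by a saddle-point argument, and this is precisely what should produce the square-root constraint $t_0\le K_2\sqrt{(\tau-C_0\tau_0)/(\tau_2-C_0\tau_0)}$ in the hypothesis.

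Next I would estimate the two pieces separately. Widening the strip from $t$ to $t_2$ costs an exponential factor in the frequency cut-off, which I tune to the budget $e^{r}$; on the narrow strip $S_{t_0}$ the high-frequency tail combined with the decay $e^{-t|\xi|}$ yields $e^{-Gr}$, with $G$ controlled by the gaps $t-t_0$ and $\tau-C_0\tau_0$, reproducing the stated formula for $G$. The genuine difficulty is that $\mu$ is \emph{not} translation invariant: every shift of contour in the imaginary direction and every comparison of the weight at $z$ with the weight at a neighbouring point must be absorbed using the quasi-additivity $\mu(x+1)\le\Gamma\mu(x)+C$ of the weight generator. This is the mechanism that converts a contour shift into the replacement of the exponent $\tau$ by $C_0\tau_0$ in the bound for $f_0$, and it is the source of the sign-dependent constant $C_0=C_0(\operatorname{sign}(\tau))$; matching these weight transfers against the strip-width losses while keeping $\widetilde t,K_1,K_2,C_0,C_1$ independent of $r$ and of $f$ is the part I expect to be the main obstacle.

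Finally I would check that the functions produced are holomorphic on $S_{t_0}$ and $S_{t_2}$ with $f=f_0+f_2$ on the overlap $S_{t_0}$, and collect the tracked constants. An alternative route, closer in spirit to the rest of the paper, would replace the frequency cut-off by a smooth cut-off in $\re(z)$ together with a weighted $\overline{\partial}$-correction \`a la H\"ormander; but the parameter $r$ and its clean exponential trade-off are most transparent in the Fourier and heat-kernel formulation above.
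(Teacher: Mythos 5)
First, a point of reference: the paper does \emph{not} prove this statement at all. It is imported verbatim (up to a superfluous constant) from Langenbruch \cite[Theorem 2.2, p.\ 225]{L1}, and the text following it only traces the explicit values of $\widetilde{t}$, $C_{0}$, $K_{1}$, $K_{2}$ and $G$ through the proof in \cite{L1}, because the scaling argument in \prettyref{thm:omega_for_strips} needs to know exactly how these constants depend on $\Gamma$. So your attempt must stand as a self-contained proof, and it does not: it is a strategy outline whose load-bearing steps fail.

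The decisive gap is that your decomposition mechanism acts in the wrong variable. The change of weight exponents --- $\|f_{2}\|_{\tau_{2},t_{2}}\leq e^{r}$ with an \emph{arbitrary} $\tau_{2}>\tau$, and $\|f_{0}\|_{C_{0}\tau_{0},t_{0}}\leq C_{1}e^{-Gr}$ --- encodes a redistribution of $f$ between the regions where $|\re(z)|$ is small and where it is large, and no multiplier in the conjugate variable $\xi$ (heat-kernel regularisation included) can produce it. Concretely, for $\tau>0$ and $\mu(x)=|x|$ take $f(z)=1/\cosh(\tau z)$, holomorphic on $S_{t}$ for $t<\pi/(2\tau)$ with $\|f\|_{\tau,t}<\infty$ and $f(x)\geq e^{-\tau|x|}$ on $\R$. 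Your $f_{2}:=f\ast G_{s}$ (convolution with the positive heat kernel $G_{s}$) then satisfies the pointwise \emph{lower} bound $f_{2}(x)\geq c_{s}\,e^{-\tau(|x|+1)}$ for every $s>0$, since convolution with a positive kernel cannot improve decay along the real axis; hence $\sup_{x}f_{2}(x)e^{\tau_{2}|x|}=\infty$ whenever $\tau_{2}>\tau$, and the bound $\|f_{2}\|_{\tau_{2},t_{2}}\leq e^{r}$ fails for \emph{every} choice of $s$ and $r$. The cutoff must be made in $\re(z)$ itself, at a level tied to $r$, and implemented holomorphically; the strip-width term $(t-t_{0})/(2\widetilde{t}\,)$ in $G$ is the price of that holomorphic implementation, not of a frequency truncation. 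Secondly, your foundational step --- ``the traces define tempered objects'' with Fourier transform decaying like $e^{-t|\xi|}$ --- is false precisely in the case the paper needs, namely $\tau<0$ (the application has $a_{n}<0$, $a_{n}\nearrow 0$): there $|f|$ may grow like $e^{|\tau|\mu}$, which for $\mu(x)=|x|^{\gamma}$ is not polynomially bounded (so the trace is not a tempered distribution), and for a general weight generator ($\mu(x+1)\leq\Gamma\mu(x)+C$ permits $\mu(x)\sim\Gamma^{x}$) is doubly exponential, so even the integral $\int_{\R}f(w)G_{s}(z-w)\,\d w$ diverges; the decay $e^{-t|\xi|}$ you invoke requires boundedness of $f$ on the strip, which you do not have. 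Your closing aside --- a smooth cutoff in $\re(z)$ plus a weighted $\overline{\partial}$-correction \`a la H\"ormander --- is in fact the viable route, and it is consistent with the constants $\cos(1/2)$, $\cosh(1)$, $\ln(\Gamma)$ that the paper extracts from \cite{L1}, which stem from explicit auxiliary functions on strips such as $\re\cosh(az)=\cosh(ax)\cos(ay)$ with $a=\ln(\Gamma)$; but you offer it only as an alternative and do not develop it, while the uniformity of $\widetilde{t},K_{1},K_{2},C_{0},C_{1}$ in $r$ and $f$ together with the sign-dependence of $C_{0}$ --- which \emph{is} the content of the theorem --- is explicitly deferred as ``the main obstacle'' rather than resolved.
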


To apply this theorem, we have to know the constants involved. In the following the notation of \cite{L1} is used and it is referred 
to the corresponding positions resp.\ conditions for these constants. We have
\[
\widetilde{t}:=\frac{1}{4 \ln(\Gamma)}
\]
by \cite[Lemma 2.4, (2.15), p.\ 228]{L1} with $\Gamma$ from \prettyref{def:weight_generator} such that $\Gamma\geq e^{1/4}$. 
The choice $\Gamma\geq e^{1/4}$ comes from wanting $\widetilde{t}\leq 1$ in \cite[Lemma 2.4, p.\ 228]{L1}.
By \cite[Corollary 2.6, p.\ 230-231]{L1} we have
\[
C_{0}:=
\begin{cases}4 \Gamma B_{3}=\frac{64\cosh(1)}{\cos(1/2)}\Gamma^{2}>1 &,\tau<0,\\
\frac{1}{4 \Gamma B_{3}}=\frac{\cos(1/2)}{64\cosh(1)\Gamma^{2}}<1 &,\tau\geq 0,
\end{cases}
\]
where $B_{3}:=\frac{16\cosh(1)}{\cos(1/2)}\Gamma$ by \cite[Lemma 2.4, p.\ 228-229]{L1}.\footnote{An error in part b) of this lemma, 
p.\ 229, is corrected here such that the term $\cos(1/2)=\min_{|y|\leq \widetilde{t}=1/(2C_{1})}\cos(C_{1}y)$ appears.} 
To get the constants $K_{1}$ and $K_{2}$, we have to analyze the conditions for $t_{0}$ in the proof of \cite[Theorem 2.2, p.\ 225]{L1}. 
By the assumptions on $\tau_{0}$, $\tau$ and $\tau_{2}$ and the choice of $C_{0}$ we obtain
\begin{equation}\label{thm12.1}
\tau_{2}-C_{0}\tau_{0}>\tau_{2}-C_{0}\tau\geq \tau_{2}-\tau>0
\end{equation}
and
\begin{equation}\label{thm12.2}
\tau-C_{0}\tau_{0}>\tau-C_{0}\tau=\tau(1-C_{0})>0.
\end{equation}
By choosing $D>0$ in the proof of \cite[Theorem 2.2, (2.22), p.\ 232-233]{L1} as
$D:=\frac{\tau-C_{0}\tau_{0}}{(\tau_{2}-C_{0}\tau_{0})2\Gamma_{0}}$, 
the estimate
\[
D=\frac{\tau-C_{0}\tau_{0}}{(\tau_{2}-C_{0}\tau_{0})2\Gamma_{0}}
 =\min\Bigl(\frac{1}{2\widetilde{\Gamma}},\frac{1}{2\widehat{\Gamma}}\Bigr)\frac{\tau-C_{0}\tau_{0}}{\tau_{2}-C_{0}\tau_{0}}
 \underset{\eqref{thm12.1},\,\eqref{thm12.2}}{\leq}\min\Bigl(\frac{1}{2\widetilde{\Gamma}},\frac{1}{2\widehat{\Gamma}}\Bigr)
 \frac{\tau-C_{0}\tau_{0}}{\tau_{2}-C_{0}\tau}
\]
holds where $\Gamma_{0}:=\max(\widetilde{\Gamma},\widehat{\Gamma})$ with $\widetilde{\Gamma}$, $\widehat{\Gamma}>1$ from the proof. 
With $\theta\geq\frac{t-t_{0}}{2\widetilde{t}}$ (p.\ 232) we get on p.\ 233, below (2.24), due to the condition 
$t_{0}\leq T_{0}:=\min(\frac{t}{2},\frac{1}{4a^{2}B_{1}\widetilde{t}})$,
\begin{align*}
 \min\Bigl(\frac{\theta}{2}, D, 1\Bigr)
&\geq \min\Bigl(\frac{1}{2},\frac{1}{2\Gamma_{0}}\Bigr)
 \min\Bigl(\theta,\frac{\tau-C_{0}\tau_{0}}{\tau_{2}-C_{0}\tau_{0}},1\Bigr)
 \geq\frac{1}{2\Gamma_{0}}\min\Bigl(\frac{t-t_{0}}{2\widetilde{t}},\frac{\tau-C_{0}\tau_{0}}{\tau_{2}-C_{0}\tau_{0}},1\Bigr)\\
&\geq \min\Bigl(\frac{1}{2\Gamma_{0}},\frac{1}{4a^{2}B_{1}\widetilde{t}}\Bigr)\min\Bigl(\frac{t-t_{0}}{2\widetilde{t}},
 \frac{\tau-C_{0}\tau_{0}}{\tau_{2}-C_{0}\tau_{0}},1\Bigr)\\
&=\underbrace{\min\Bigl(\frac{1}{2\Gamma_{0}},\frac{1}{2\cosh(1)\ln(\Gamma)}\Bigr)}_{=:K_{1}} 
  \min\Bigl(\frac{t-t_{0}}{2\widetilde{t}},\frac{\tau-C_{0}\tau_{0}}{\tau_{2}-C_{0}\tau_{0}},1\Bigr)=:G
\end{align*}
where $a:=\ln(\Gamma)$ (in the middle of p.\ 231) and $B_{1}:=2\cosh(1)$ by the proof of \cite[Lemma 2.3, p.\ 226-227]{L1}.
The assumptions $2t_{0}<t$ and $t_{0}\leq K_{1}$ in \prettyref{thm:decomposition} guarantee 
that the condition $t_{0}\leq T_{0}$ is satisfied. 
Looking at the condition $t_{0}\leq T_{1}:=\sqrt{\frac{D}{a^{2}B_{1}}}$ (p.\ 232), we derive
\[
 T_{1}=\frac{1}{\sqrt{2\Gamma_{0}a^{2}B_{1}}}\sqrt{\frac{\tau-C_{0}\tau_{0}}{\tau_{2}-C_{0}\tau_{0}}}
=\underbrace{\frac{1}{2\sqrt{\cosh(1)\Gamma_{0}}\ln(\Gamma)}}_{=:K_{2}}\sqrt{\frac{\tau-C_{0}\tau_{0}}{\tau_{2}-C_{0}\tau_{0}}}.
\]

For the subsequent theorem we merge and modify the proofs of \cite[Satz 2.2.3, p.\ 44]{Toenjes}
\footnote{The proof of \cite[Satz 2.2.3, p.\ 44]{Toenjes} relies on \cite[Satz 2.2.1, p.\ 43]{Toenjes} 
which is an announced version (without a proof) of our density result \prettyref{cor:weakly_reduced}.} 
($a_{n}=n$, $n\in\N$, and $\mu$ a strong weight generator) and 
\cite[5.20 Theorem, p.\ 84]{ich} ($a_{n}=-1/n$, $n\in\N$, and $\mu=|\re(\cdot)|$). 

\begin{thm}\label{thm:omega_for_strips}
Let $\mu$ be a strong weight generator, $(a_{n})_{n\in\N}$ strictly increasing, $a_{n}<0$ for all $n\in\N$ 
or $a_{n}\geq 0$ for all $n\in\N$, $\lim_{n\to\infty}a_{n}=0$ or $\lim_{n\to\infty}a_{n}=\infty$, 
$\mathcal{V}:=(\exp(a_{n}\mu))_{n\in\N}$ and $\Omega_{n}:=S_{n}$ for all $n\in\N$.
If \prettyref{cond:dense} with $I_{214}(n)\geq I_{14}(n+1)$ for all $n\in\N$ and $(\omega.1)$ are fulfilled, 
then $\mathcal{OV}(\C)$ satisfies $(\Omega)$.
\end{thm}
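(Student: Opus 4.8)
The seminorms of $\mathcal{OV}(\C)$ are, for the weights at hand, exactly the norms appearing in \prettyref{thm:decomposition}: since $\Omega_{n}=S_{n}$ and $\nu_{n}=\exp(a_{n}\mu)$ we have $|f|_{n}=\sup_{z\in S_{n}}|f(z)|e^{a_{n}\mu(z)}=\|f\|_{a_{n},n}$, i.e.\ $\mathcal{O}\nu_{n}(\Omega_{n})=H_{a_{n}}(S_{n})$. The plan is therefore to verify \eqref{om1} by producing, for each $f\in\mathcal{OV}(\C)$ with $|f|_{q}\leq 1$, a splitting $f=g+h$ inside $\mathcal{OV}(\C)$ with $|h|_{p}\leq 1/r$ and $|g|_{k}\leq Cr^{n}$, where the quantitative splitting is supplied by \prettyref{thm:decomposition} and the globalisation of its pieces by the weak reducibility \prettyref{cor:weakly_reduced}.

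Fix $p$. Reading $|f|_{q}\leq 1$ as $\|f\|_{a_{q},q}\leq 1$, I would feed $f$ into \prettyref{thm:decomposition} with $\tau:=a_{q}$ and choose the auxiliary parameters $\tau_{0}<\tau<\tau_{2}$, $t_{0}<t<t_{2}$ so that the two outputs land in the intended seminorms. At the level of a single thin strip the piece $f_{0}$ should absorb the $|\cdot|_{p}$-part: since $e^{a_{p}\mu}\leq e^{C_{0}\tau_{0}\mu}$ on $\mu\geq 0$, arranging $a_{p}\leq C_{0}\tau_{0}$ and $p\leq t_{0}$ yields $|f_{0}|_{p}\leq\|f_{0}\|_{C_{0}\tau_{0},t_{0}}\leq C_{1}e^{-Gr}$, while dually $f_{2}$ absorbs the $|\cdot|_{k}$-part via $a_{k}\leq\tau_{2}$ and $k\leq t_{2}$, giving $|f_{2}|_{k}\leq\|f_{2}\|_{\tau_{2},t_{2}}\leq e^{r}$. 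This is precisely where the explicitly computed value of $C_{0}$ enters, and the bookkeeping splits into the two admitted sign regimes: for $a_{n}\geq 0$ one has $C_{0}<1$ and solves $\tau_{0}\geq a_{p}/C_{0}$ by choosing $q$ so large that $a_{q}>a_{p}/C_{0}$ (possible as $a_{n}\to\infty$); for $a_{n}<0$ one has $C_{0}>1$ and the same inequality $a_{q}>a_{p}/C_{0}$ is reached because $a_{n}\nearrow 0$ while $a_{p}/C_{0}$ is a fixed negative number. In either case $q=q(p)$ is selected here, and $\tau_{2}:=a_{k}$ settles the $f_{2}$-side once $k>q$, the case $k\leq q$ being trivial by monotonicity of the seminorms.

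Since $f_{0},f_{2}$ are only holomorphic on strips, to land in $\mathcal{OV}(\C)$ I would globalise the extending piece: by \prettyref{cor:weakly_reduced} the restriction of $f_{2}$ is approximable, in the relevant seminorm, by some $g\in\mathcal{OV}(\C)$ up to arbitrarily small error, and I set $h:=f-g\in\mathcal{OV}(\C)$. On the common domain $h=f_{0}+(f_{2}-g)$, so $|h|_{p}\lesssim C_{1}e^{-Gr}$ and $|g|_{k}\leq|f_{2}|_{k}+|f_{2}-g|_{k}\lesssim e^{r}$. Finally I would pass from the exponential to the polynomial scale demanded by \eqref{om1}: replacing $r$ by $\tfrac1G\ln(C_{1}r)$ turns $C_{1}e^{-Gr}$ into $1/r$ and $e^{r}$ into $(C_{1}r)^{1/G}$, so that $(\Omega)$ holds with $n:=\lceil 1/G\rceil$ and a constant $C$ depending on $p,q,k$; the small-$r$ range is covered trivially, as there $Cr^{n}U_{k}$ already absorbs $U_{q}$.

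The main obstacle is that \prettyref{thm:decomposition} forces $t_{2}<\widetilde t\leq 1$ and $t_{0}\leq K_{1}$, whereas $|\cdot|_{n}$ lives on the strip $S_{n}$ of width $n$, so a single application can never meet $p\leq t_{0}$ or $k\leq t_{2}$ for integer $p,k\geq 1$. I would resolve this by iterating the thin-strip splitting in the imaginary direction: because $\mu(z)=\mu(|\re(z)|)$ is invariant under vertical translation, \prettyref{thm:decomposition} applies verbatim to each shifted function $z\mapsto f(z+iy_{j})$, and peeling defects on a ladder of strips $S_{t_{0}}+iy_{j}$ with $|y_{j+1}-y_{j}|<\widetilde t$ and $|y_{j}|\leq k$ builds up control across the full width. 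Only $O(k)$ rungs occur, because $k$ is already fixed when $n,C$ are chosen, so the geometric accumulation of the constants $C_{0},C_{1}$ is harmless and is absorbed into $C=C(p,q,k)$, while each rung contributes the same factor $e^{-Gr}$, leaving the exponential gain in $r$ intact after summation. Keeping $\Gamma$, $C_{0}$ and $G$ uniform along this ladder is exactly what the \emph{strong} weight generator hypothesis $\mu(x+n)\leq\Gamma\mu(x)+C$ guarantees, and the real-direction globalisation at each rung is the content of \prettyref{cor:weakly_reduced}, whose proof already telescopes over the index functions $I_{14},I_{214}$. Making this accumulation uniform—so that one pair $(n,C)$ serves all $r>0$—is the delicate point of the argument.
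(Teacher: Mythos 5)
Your skeleton matches the paper's proof — Langenbruch's decomposition \prettyref{thm:decomposition} for the quantitative splitting, the weak reducibility \prettyref{cor:weakly_reduced} to replace the strip-pieces by elements of $\mathcal{OV}(\C)$, the sign-regime choice of $q$ via $C_{0}$, and the final exponential-to-polynomial rescaling of $r$ — and you correctly isolate the central obstacle, namely that \prettyref{thm:decomposition} only operates on strips of width less than $\widetilde{t}\leq 1$ while the seminorms $|\cdot|_{p},|\cdot|_{q},|\cdot|_{k}$ live on strips of integer width. However, your resolution of that obstacle, the vertical ladder, does not work, and this is a genuine gap. First, the hypothesis $|f|_{q}\leq 1$ gives no control whatsoever on $f$ outside $S_{q}$ (an entire function bounded by $1$ on $S_{q}$ can be doubly exponentially large on $S_{k}$ for $k>q$; consider $e^{-\cosh(Mz)}$ with $\pi/(2M)$ slightly larger than $q$). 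Consequently, for rungs with $|y_{j}|+t>q$ the shifted function $f(\cdot+iy_{j})$ has no admissible input norm $\|\cdot\|_{\tau,t}$, so \prettyref{thm:decomposition} cannot be applied to it at all; control cannot ``build up across the full width'' of $S_{k}$ by vertical iteration, because the only mechanism that extends control to a wider strip is the theorem itself, and its output width is capped by $\widetilde{t}\leq 1$ no matter how often you iterate. Second, even on the rungs inside $S_{q}$ the local decompositions $f=f_{0,j}+f_{2,j}$ produce \emph{different} pairs of functions on overlapping thin strips; assembling them into a single global splitting $f=g+h$ is a Cousin-type gluing problem with bounds which your ``peeling of defects'' does not address and which \prettyref{cor:weakly_reduced} (a density statement, not a gluing statement) cannot perform.

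The misdiagnosis shows in your use of the strong weight generator hypothesis: vertical translations do not change $\mu$ at all, since $\mu(z)=\mu(|\re(z)|)$, so nothing about the ladder requires it. Its actual role — and the paper's actual resolution of the width obstacle — is a \emph{horizontal rescaling}: one chooses $T=T(p,q,k)>0$ so small that all relevant strips $S_{2J_{1}I_{14}(p)}$, $S_{q}$, $S_{2J_{1}I_{14}(k)}$ are mapped by $z\mapsto Tz$ into strips of width less than $\widetilde{t}$, applies \prettyref{thm:decomposition} \emph{once} to $\widetilde{f}:=f(\cdot/T)$ with the rescaled weight $\widetilde{\mu}:=\mu(\cdot/T)$, and rescales back. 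The point is that $\widetilde{\mu}(x+1)\leq\mu\bigl(x/T+\lceil 1/T\rceil\bigr)\leq\Gamma\widetilde{\mu}(x)+C$ precisely because $\mu$ satisfies the \emph{strong} condition $\mu(x+n)\leq\Gamma\mu(x)+C$ with $\Gamma$ independent of $n$; hence $\widetilde{\mu}$ is a weight generator with the same $\Gamma$, so the constants $\widetilde{t},C_{0},K_{1},K_{2}$ of \prettyref{thm:decomposition} are unchanged and a single application suffices, with no accumulation of constants to control. (A minor additional slip: for $0<\widetilde{r}<1$ it is the term $\tfrac{1}{\widetilde{r}}U_{p}$ in \eqref{om1} that absorbs $U_{q}$, via $|f|_{p}\leq|f|_{q}\leq 1<1/\widetilde{r}$, not $C\widetilde{r}^{\,n}U_{k}$, which shrinks as $\widetilde{r}\to 0$.)
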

\begin{proof}
Let $p\in\N$. As $(a_{n})_{n\in\N}$ is strictly increasing and $\lim_{n\to\infty}a_{n}=0$ or $\lim_{n\to\infty}a_{n}=\infty$, 
we may choose $q\in\N$ such that $a_{2J_{1}I_{14}(p)}/C_{0}<a_{q}$ and $4J_{1}I_{14}(p)<q$. 
To use the theorem above, we need a linear transformation between strips to get the decomposition on the desired strip, 
desired in the spirit of \prettyref{cor:weakly_reduced}. We choose $\Gamma\geq e^{1/4}$ and $T\in\R$ such that
\begin{equation}\label{thm13.0}
 0<T<\frac{1}{4\max(q+1,2J_{1}I_{14}(k))\ln(\Gamma)}
\end{equation}
which also fulfils 
\begin{align}\label{thm13.1}
T&\leq\frac{1}{2J_{1}I_{14}(p)}\min\Biggl(\frac{1}{2\Gamma_{0}},\frac{1}{2\cosh(1)\ln(\Gamma)},\notag\\
&\phantom{\frac{1}{2J_{1}I_{14}(p)}\min\Biggl(}\;\;\frac{1}{2\sqrt{\cosh(1)\Gamma_{0}}\ln(\Gamma)}
\sqrt{\frac{a_{q}-a_{2J_{1}I_{14}(p)}}{\max(a_{q+1},a_{2J_{1}I_{14}(k)})-a_{2J_{1}I_{14}(p)}}}\Biggr).
\end{align}
Let
\begin{align*}
\tau_{0}&:=\frac{a_{2J_{1}I_{14}(p)}}{C_{0}}, && \tau:=a_{q}, &&\tau_{2}:=\max(a_{q+1},a_{2J_{1}I_{14}(k)}),\\
t_{0}&:= 2J_{1}I_{14}(p)T,  &&t:=qT,   &&t_{2}:=\max(q+1,2J_{1}I_{14}(k))T.
\end{align*}
By the choice of $q$ we have
\[
\tau_{0}=\frac{a_{2J_{1}I_{14}(p)}}{C_{0}}<a_{q}=\tau<\max(a_{q+1},a_{2J_{1}I_{14}(k)})=\tau_{2}.
\]
By the choice of $q$ and \eqref{thm13.0} we get
\[
0<2t_{0}=4J_{1}I_{14}(p)T<qT=t<\max(q+1,2J_{1}I_{14}(k))T=t_{2}<\frac{1}{4\ln(\Gamma)}=\widetilde{t}.
\]
Further, we deduce from \eqref{thm13.1} that 
\[
 t_{0}=2J_{1}I_{14}(p)T\leq\min\Bigl[K_{1},K_{2}\sqrt{\frac{\tau-C_{0}\tau_{0}}{\tau_{2}-C_{0}\tau_{0}}}\Bigr].
\]
Let $r\geq 0$ and $f\in\mathcal{OV}(\C)$ such that $|f|_{q}=\|f\|_{a_{q},q}\leq 1$. We set $\widetilde{f}\colon S_{qT}\to \C$, 
$\widetilde{f}(z):=f(z/T)$, and define
\[
 H_{\tau}^{\sim}(S_{t}):=\{g\in\mathcal{O}(S_{t})\;|\;\|g\|_{\tau,t}^{\sim}:=\sup_{z\in S_{t}}|g(z)|e^{\tau\widetilde{\mu}(z)}<\infty\}
\]
where $\widetilde{\mu}:=\mu(\cdot/T)$. We note that for $\widetilde{n}:=\lceil 1/T\rceil$, 
where $\lceil\cdot\rceil$ is the ceiling function, there is $C>0$ such that for all $x\geq 0$
\[
 \widetilde{\mu}(x+1)=\mu\Bigl(\frac{x+1}{T}\Bigr)
 \leq\mu\Bigl(\frac{x}{T}+\bigl\lceil\frac{1}{T}\bigr\rceil\Bigr)
 =\mu\Bigl(\frac{x}{T}+\widetilde{n}\Bigr)
 \leq\Gamma \mu\Bigl(\frac{x}{T}\Bigr)+C
 =\Gamma\widetilde{\mu}(x)+C
\]
because $\mu$ is a strong weight generator. We conclude that $\widetilde{\mu}$ is also a weight generator 
with the same $\Gamma$ as $\mu$ which is independent of $T$. Moreover, from
\[
\|\widetilde{f}\|_{\tau,t}^{\sim}=\sup_{z\in S_{qT}}|\widetilde{f}(z)|e^{a_{q}\widetilde{\mu}(z)}
=\sup_{z\in S_{q}}|f(z)|e^{a_{q}\mu(z)}=|f|_{q}\leq 1
\]
follows by \prettyref{thm:decomposition} that there are $\widetilde{f}_{j}\in\mathcal{O}(S_{t_{j}})$, $j\in\{0,2\}$, such that
\begin{equation}\label{thm13.2}
\widetilde{f}(z)=\widetilde{f}_{0}(z)+\widetilde{f}_{2}(z),\quad z\in S_{t_{0}},
\end{equation}
and
\begin{align}\label{thm13.3}
 C_{1}e^{-Gr}
&\geq\|\widetilde{f}_{0}\|_{C_{0}\tau_{0},t_{0}}^{\sim}
 =\sup_{z\in S_{t_{0}}}|\widetilde{f}_{0}(z)|e^{C_{0}\tau_{0}\widetilde{\mu}(z)}
 =\sup_{z\in S_{t_{0}/T}}|\underbrace{\widetilde{f}_{0}(Tz)}_{=:f_{0}(z)}|e^{C_{0}\tau_{0}\widetilde{\mu}(Tz)}\notag\\
&=\sup_{z\in S_{2J_{1}I_{14}(p)}}|f_{0}(z)|e^{a_{2J_{1}I_{14}(p)}\mu(z)}=|f_{0}|_{2J_{1}I_{14}(p)},
\end{align}
where $f_{0}\in\mathcal{O}(S_{2J_{1}I_{14}(p)})$, as well as
\begin{align}\label{thm13.4}
 e^{r}
&\geq\|\widetilde{f}_{2}\|_{\tau_{2},t_{2}}^{\sim}
 =\sup_{z\in S_{t_{2}}}|\widetilde{f}_{2}(z)|e^{\tau_{2}\widetilde{\mu}(z)}
 =\sup_{z\in S_{t_{2}/T}}|\underbrace{\widetilde{f}_{2}(Tz)}_{=:f_{2}(z)}|e^{\tau_{2}\widetilde{\mu}(Tz)}\notag\\
&\geq\sup_{z\in S_{2J_{1}I_{14}(k)}}|f_{2}(z)|e^{a_{2J_{1}I_{14}(k)}\mu(z)}=|f_{2}|_{2J_{1}I_{14}(k)}
\end{align}
where $f_{2}\in\mathcal{O}(S_{t_{2}/T})\subset\mathcal{O}(S_{2J_{1}I_{14}(k)})$ and the inclusion is justified by the identity theorem. 
Furthermore, for $z\in S_{t_{0}/T}=S_{2J_{1}I_{14}(p)}$ the equation
\[
f(z)=\widetilde{f}(Tz)\underset{\eqref{thm13.2}}{=}\widetilde{f}_{0}(Tz)+\widetilde{f}_{2}(Tz)=f_{0}(z)+f_{2}(z)
\]
holds, thus $f=f_{0}+f_{2}$ on $S_{2J_{1}I_{14}(p)}$. By virtue of \prettyref{cor:weakly_reduced} the following is valid:
\begin{equation}\label{thm13.5}
\forall\;\varepsilon>0\;\exists\;\widehat{f}_{0},\,\widehat{f}_{2}\in\mathcal{OV}(\C):\;
(i)\;\;|\widehat{f}_{0}-f_{0}|_{p}<\varepsilon \quad\text{and}\quad
(ii)\;\;|\widehat{f}_{2}-f_{2}|_{k}<\varepsilon.
\end{equation}
Now, we have to consider two cases. Let $\varepsilon:=C_{1}e^{-Gr}$. For $k\leq p$ we get via \eqref{thm13.5} $(i)$
\[
f=\widehat{f}_{0}+(f_{2}+f_{0}-\widehat{f}_{0})\quad\text{on}\;S_{2J_{1}I_{14}(p)},
\]
so
\begin{equation}\label{thm13.6}
f_{2}+f_{0}-\widehat{f}_{0}=f-\widehat{f}_{0}=:\overline{f}_{2}\quad\text{on}\;S_{2J_{1}I_{14}(p)}
\end{equation}
where the function $\overline{f}_{2}\in\mathcal{OV}(\C)$ and thus is a holomorphic extension of 
the left-hand side on $\C$. Hence we clearly have $f=\widehat{f}_{0}+\overline{f}_{2}$ and
\begin{equation}\label{thm13.7}
 |\widehat{f}_{0}|_{p}
\leq|\widehat{f}_{0}-f_{0}|_{p}+|f_{0}|_{p}
 \underset{\eqref{thm13.5} (i)}{\leq}\varepsilon+|f_{0}|_{p}
\leq\varepsilon+|f_{0}|_{2J_{1}I_{14}(p)}
\underset{\eqref{thm13.3}}{\leq}2C_{1}e^{-Gr}=:C_{2}e^{-Gr}
\end{equation}
as well as
\begin{align}\label{thm13.8}
 |\overline{f}_{2}|_{k}
&\leq |\overline{f}_{2}-f_{2}|_{k}+|f_{2}|_{k}
 \underset{\eqref{thm13.6},\,k\leq p}{\leq}|f_{0}-\widehat{f}_{0}|_{p}+|f_{2}|_{2J_{1}I_{14}(k)}
 \underset{\eqref{thm13.5} (i)}{\leq} \varepsilon+|f_{2}|_{2J_{1}I_{14}(k)}\notag\\
&\underset{\mathclap{\eqref{thm13.4}}}{\leq} C_{1}e^{-Gr}+e^{r}
 \leq(C_{1}+1)e^{r}=:C_{3}e^{r}.
\end{align}
Analogously, for $k>p$ we obtain via \eqref{thm13.5} $(ii)$
\[
f=\widehat{f}_{2}+(f_{0}+f_{2}-\widehat{f}_{2})\quad\text{on}\;S_{2J_{1}I_{14}(p)},
\]
so
\begin{equation}\label{thm13.9}
f_{0}+f_{2}-\widehat{f}_{2}=f-\widehat{f}_{2}=:\overline{f}_{0}\quad\text{on}\;S_{2J_{1}I_{14}(p)}
\end{equation}
where the function $\overline{f}_{0}\in\mathcal{OV}(\C)$ and thus is a holomorphic extension of the left-hand side on $\C$. 
Hence we clearly have $f=\overline{f}_{0}+\widehat{f}_{2}$ and
\begin{align}\label{thm13.10}
 |\overline{f}_{0}|_{p}
&=|f-\widehat{f}_{2}|_{p}
 \underset{\eqref{thm13.9}}{=}|f_{0}+f_{2}-\widehat{f}_{2}|_{p}
 \leq |f_{2}-\widehat{f}_{2}|_{p}+|f_{0}|_{p}
 \underset{k>p}{\leq}|f_{2}-\widehat{f}_{2}|_{k}+|f_{0}|_{2J_{1}I_{14}(p)}\notag\\
&\underset{\mathclap{\eqref{thm13.5}(ii)}}{\leq}\varepsilon+|f_{0}|_{2J_{1}I_{14}(p)}
 \underset{\eqref{thm13.3}}{\leq} 2C_{1}e^{-Gr}=C_{2}e^{-Gr}
\end{align}
as well as
\begin{equation}\label{thm13.11}
 |\widehat{f}_{2}|_{k}
\leq|\widehat{f}_{2}-f_{2}|_{k}+|f_{2}|_{k}
 \underset{\eqref{thm13.5} (ii)}{\leq}\varepsilon+|f_{2}|_{2J_{1}I_{14}(k)}
 \underset{\eqref{thm13.4}}{\leq} C_{1}e^{-Gr}+e^{r}\leq C_{3}e^{r}.
\end{equation}
Next, we set $n:=\lceil 1/G \rceil$ and $C:=C_{3}e^{\ln(C_{2})/G}$. Let $\widetilde{r}>0$. 
For $\widetilde{r}\geq 1$ there is $r\geq 0$ such that
\[
\widetilde{r}=e^{Gr-\ln(C_{2})}=\frac{e^{Gr}}{C_{2}}
\]
and we have by \eqref{thm13.7} and \eqref{thm13.8} for $k\leq p$
\[
|\widehat{f}_{0}|_{p}\leq C_{2}e^{-Gr}=\frac{1}{\widetilde{r}},
\quad
|\overline{f}_{2}|_{k}
\leq C_{3}e^{r}=C_{3}e^{\frac{1}{G}\ln(C_{2})}e^{\frac{1}{G}(Gr-\ln(C_{2}))}
=C\,\widetilde{r}^{\,\frac{1}{G}}\underset{\widetilde{r}\geq 1}{\leq}C\,\widetilde{r}^{\;n},
\]
as well as by \eqref{thm13.10} and \eqref{thm13.11} for $k>p$
\[
|\overline{f}_{0}|_{p}\leq\frac{1}{\widetilde{r}},\quad |\widehat{f}_{2}|_{k}\leq C\,\widetilde{r}^{\;n}.
\]
For $0<\widetilde{r}<1$ we have, since $q\geq p$,
\[
|f|_{p}\leq|f|_{q}\leq 1<\frac{1}{\widetilde{r}}.
\]
Thus our statement is proved.
\end{proof}

Let us remark that the choice of the sequence $(a_{n})_{n\in\N}$ in the preceding theorem does not really matter. 

\begin{rem}
Let $\mu\colon\C\to [0,\infty)$ be continuous, $(a_{n})_{n\in\N}$ strictly increasing, $a_{n}<0$ for all $n\in\N$ 
or $a_{n}\geq 0$ for all $n\in\N$, $\lim_{n\to\infty}a_{n}=0$ or $\lim_{n\to\infty}a_{n}=\infty$, 
$\mathcal{V}:=(\exp(a_{n}\mu))_{n\in\N}$ and $\Omega_{n}:=S_{n}$ for all $n\in\N$. 
Set $\mathcal{V}_{-}:=(\exp((-1/n)\mu))_{n\in\N}$ and $\mathcal{V}_{+}:=(\exp(n\mu))_{n\in\N}$. 
Then 
\[
\mathcal{OV}(\C)\cong \mathcal{OV}_{-}(\C),\quad \text{if}\;a_{n}<0,\quad\text{and}\quad
\mathcal{OV}(\C)\cong \mathcal{OV}_{+}(\C),\quad \text{if}\;a_{n}\geq 0,
\]
which is easily seen. Thus one may choose the most suitable sequence $(a_{n})_{n\in\N}$ for one's purpose
without changing the space.
\end{rem}

\begin{cor}\label{cor:omega_for_strips}
Let $(a_{n})_{n\in\N}$ be strictly increasing, $a_{n}<0$ for all $n\in\N$ 
or $a_{n}\geq 0$ for all $n\in\N$, $\lim_{n\to\infty}a_{n}=0$ or $\lim_{n\to\infty}a_{n}=\infty$, 
$\mathcal{V}:=(\exp(a_{n}\mu))_{n\in\N}$ and $\Omega_{n}:=S_{n}$ for all $n\in\N$
where 
\[
 \mu\colon\C \to [0,\infty),\;\mu(z):=|\re(z)|^{\gamma},
\]
for some $0<\gamma\leq 1$. Then $\mathcal{OV}(\C)$ satisfies $(\Omega)$.
\end{cor}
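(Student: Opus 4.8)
The plan is to check that the concrete data of the corollary satisfy all hypotheses of \prettyref{thm:omega_for_strips}; once this is done the assertion is immediate. The requirements on $(a_{n})_{n\in\N}$ are already built into the corollary, so there are exactly three things to verify: that $\mu(z)=|\re(z)|^{\gamma}$ is a strong weight generator, that $(\omega.1)$ holds, and that \prettyref{cond:dense} holds with $I_{214}(n)\geq I_{14}(n+1)$ for all $n\in\N$.

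First I would verify the strong weight generator property from \prettyref{def:weight_generator}. Continuity, the dependence $\mu(z)=\mu(|\re(z)|)$ and the strict monotonicity of $x\mapsto x^{\gamma}$ on $[0,\infty)$ are clear for $0<\gamma\leq1$, and $\lim_{x\to\infty}\ln(1+x)/x^{\gamma}=0$ since the logarithm is dominated by any positive power. For the growth bound I would use that $t\mapsto t^{\gamma}$ is subadditive on $[0,\infty)$ for $0<\gamma\leq1$: for every $n\in\N$ and $x\geq0$ one has $(x+n)^{\gamma}\leq x^{\gamma}+n^{\gamma}$, so with any single $\Gamma>1$ (take $\Gamma\geq e^{1/4}$ to fit the constants entering \prettyref{thm:decomposition}) and $C:=n^{\gamma}$ we obtain $\mu(x+n)\leq\Gamma\mu(x)+C$, which is precisely the strong condition with $\Gamma$ independent of $n$.

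Next I would read off $(\omega.1)$ from \prettyref{ex:weights_little_omega}. Writing $z=(x_{1},x_{2})$ with $x_{2}=\im(z)$, the strip $S_{n}=\{(x_{1},x_{2})\mid|x_{2}|<n\}$ is the set in case (iv) of that example with $I:=\{2\}$ and $\Omega=\R^{2}$; then $I_{0}=\{1\}$ yields $|(x_{i})_{i\in I_{0}}|^{\gamma}=|x_{1}|^{\gamma}=\mu(z)$. Hence part a) of \prettyref{ex:weights_little_omega} applies and gives that $\mathcal{V}$ and $(\Omega_{n})_{n\in\N}$ satisfy all of \prettyref{cond:weights}, in particular $(\omega.1)$.

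The remaining point, \prettyref{cond:dense} together with $I_{214}(n)\geq I_{14}(n+1)$, is where I expect the real difficulty to lie. Following the announcement before \prettyref{thm:dense_proj_lim} I would take $g_{n}(z):=\exp(-z^{2})$, so that $g_{n}(0)=1$ and $|g_{n}(z)|=e^{x_{2}^{2}-x_{1}^{2}}$; on each strip $S_{k}$ the factor $e^{x_{2}^{2}}$ is bounded and the factor $e^{-x_{1}^{2}}$ gives Gaussian decay in the real direction. Because the weights only grow like $e^{a_{n}|x_{1}|^{\gamma}}$ with $|x_{1}|^{\gamma}$ dominated by $x_{1}^{2}$, this decay overpowers the weight growth and forces the convergence of the integrals in parts a)(iii) and b) of \prettyref{cond:dense}; part a)(i) follows from $\nu_{n}/\nu_{I_{1}(n)}=\exp((a_{n}-a_{I_{1}(n)})|\re(\cdot)|^{\gamma})\to0$ as $|\re(\cdot)|\to\infty$ (here $a_{n}-a_{I_{1}(n)}<0$ since $I_{1}(n)>n$), and the separation requirements a)(ii) and c) are met by using horizontal translates $X_{I_{2}(n)}$ of the strips. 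Choosing the index functions $I_{1},I_{2},I_{4}$ affinely, of the form $n\mapsto n+c_{j}$ with $c_{2}\geq1$, one arranges $I_{214}(n)\geq I_{14}(n+1)$. The detailed integral and geometric estimates for $\nu_{n}=\exp(a_{n}|\re(\cdot)|^{\gamma})$, $\Omega_{n}=S_{n}$ and $g_{n}=\exp(-z^{2})$ are the technical core; granting them, all hypotheses of \prettyref{thm:omega_for_strips} hold and that theorem yields that $\mathcal{OV}(\C)$ satisfies $(\Omega)$.
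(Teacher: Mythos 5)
Your overall strategy is the same as the paper's: reduce everything to \prettyref{thm:omega_for_strips} and check its three hypotheses. Your first two verifications are exactly the paper's arguments — that $\mu(z)=|\re(z)|^{\gamma}$ is a strong weight generator follows from $(x+n)^{\gamma}\leq x^{\gamma}+n^{\gamma}$ with any $\Gamma>1$ and $C:=n^{\gamma}$, and $(\omega.1)$ follows from \prettyref{ex:weights_little_omega} a) with $(\Omega_{n})_{n\in\N}$ from case (iv), $I=\{2\}$, $I_{0}=\{1\}$. The gap is in the third hypothesis. What you call ``the technical core'' and explicitly grant — the verification of \prettyref{cond:dense} a)(ii), a)(iii) and b) for $\nu_{n}=\exp(a_{n}|\re(\cdot)|^{\gamma})$, $\Omega_{n}=S_{n}$, $g_{n}=\exp(-z^{2})$ — is precisely the content of the paper's proof and occupies almost all of it. (The paper disposes of the case $a_{n}<0$ by citing \cite[4.10 Example a), p.~22]{kruse2018_5}, using $|\re(z)|^{\gamma}\leq|z|^{\gamma}\leq|\re(z)|^{\gamma}+n^{\gamma}$ on $S_{n}$, and then carries out the Gaussian integral estimates in detail for the genuinely new case $a_{n}\geq0$, with the multiplicative choice $I_{j}(n):=2n$, $j\in\{1,2,4\}$, rather than your affine one.) A proof that grants these estimates has not proved the corollary; your a)(i) argument and the heuristic that the decay $e^{-(\re w)^{2}}$ of $g_{n}$ beats the growth $e^{a_{k}|\re(\cdot)|^{\gamma}}$ are correct in spirit, but they are the easy part.

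Moreover, your one concrete geometric claim is wrong: you propose to satisfy a)(ii) and c) ``by using horizontal translates $X_{I_{2}(n)}$ of the strips.'' Strips along the real axis are invariant under horizontal translation, so a horizontal translate of a strip can never lie in $\R^{2}\setminus\overline{\Omega}_{I_{2}(n)}$ as a)(ii) demands. Even a vertical translate (one strip shifted into the upper half-plane, say) fails: any closed $M\subset\overline{\Omega}_{n}$ leaves the two connected regions $\{\im(z)>n\}$ and $\{\im(z)<-n\}$ inside $M^{C}$, so components of $M^{C}$ can meet $\overline{\Omega}_{n}^{C}$ by escaping through the lower half-plane alone, and such a component never reaches a set placed only above the strip — condition c) then fails. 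The choice must be two-sided; the paper takes $X_{I_{2}(n)}:=\overline{S}_{4n}^{C}=\{z\in\C\;|\;|\im(z)|>4n\}$, the complement of a wider closed strip, which both keeps the required distance to $\Omega_{I_{2}(n)}=S_{2n}$ for a)(ii) and meets every escaping component for c), and which gives $I_{214}(n)=8n\geq 4n+4=I_{14}(n+1)$. So beyond deferring the decisive estimates, the sketch rests the geometry on a choice of $X_{I_{2}(n)}$ that cannot work.
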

\begin{proof}
We only need to check that the conditions of \prettyref{thm:omega_for_strips} are fulfilled. 
Obviously, $\mu(z)=\mu(|\re(z)|)$ for all $z\in\C$, $\mu$ is strictly increasing on $[0,\infty)$ 
and $\lim_{x\to\infty,\, x\in\R}\frac{\ln(1+|x|)}{\mu(x)}=0$. The observation 
 \[
  \mu(x+n)-\mu(x)=|x+n|^{\gamma}-|x|^{\gamma}\leq |x+n-n|^{\gamma}=n^{\gamma},\quad n\in\N,\;x\in[0,\infty),
 \]  
implies that $\mu$ is a strong weight generator with any $\Gamma>1$ and $C:=n^{\gamma}$ 
by \prettyref{def:weight_generator}. 
In addition, condition $(\omega.1)$ is fulfilled by \prettyref{ex:weights_little_omega} a). 
Let us turn to \prettyref{cond:dense}. If $a_{n}<0$ for all $n\in\N$, then \prettyref{cond:dense} 
is fulfilled by \cite[4.10 Example a), p.\ 22]{kruse2018_5} where we used $\widetilde{\mu}(z):=|z|^{\gamma}$ 
instead of $\mu$ which does not make a difference since 
 \[
  |\re(z)|^{\gamma}\leq |z|^{\gamma}\leq |\re(z)|^{\gamma}+n^{\gamma},\quad z\in\Omega_{n}=S_{n}.
 \] 
If $a_{n}\geq 0$ for all $n\in\N$, 
we only have to modify \cite[4.10 Example a), p.\ 22]{kruse2018_5} a bit. 
We choose $I_{j}(n):=2n$ for $j\in\{1,2,4\}$ and define the open set $X_{I_{2}(n)}:=\overline{S}_{4n}^{C}$. 
Then we have
 \[
  I_{214}(n)=8n\geq 4n+4=I_{14}(n+1),\quad n\in\N.
 \]
Furthermore, we have $\d_{n,k}=|n-k|$ for all $n,k\in\N$.

 \prettyref{cond:dense} a)(i) and c): Verbatim as in \cite[4.10 Example a), p.\ 22]{kruse2018_5}.
 
 \prettyref{cond:dense} a)(ii): We have $\d_{X,I_{2}}=2n$. 
 We choose $g_{n}\colon\C\to\C$, $g_{n}(z):=\exp(-z^2)$, 
 as well as $r_{n}:=1/(4n)$ and $R_{n}:=1/(6n)$ for $n\in\N$. 
 Let $z=z_{1}+iz_{2}\in\Omega_{I_{2}(n)}=S_{2n}$ and $x\in X_{I_{2}(n)}+\mathbb{B}_{R_{n}}(0)$. 
 For $\zeta=\zeta_{1}+i\zeta_{2}\in\C$ with $|\zeta-(z-x)|=r_{n}$ we have 
 \begin{align*}
 |g_{n}(\zeta)|e^{a_{2n}\mu(z)}&=e^{-\re(\zeta^{2})}e^{a_{2n}|\re(z)|^{\gamma}}
  \leq e^{-\zeta_{1}^{2}+\zeta_{2}^{2}}e^{a_{2n}(1+|z_{1}|)}\\
 &\leq e^{(r_{n}+|z_{2}|+|x_{2}|)^{2}+a_{2n}(1+r_{n}+|x_{1}|)}e^{-|\zeta_{1}|^{2}+a_{2n}|\zeta_{1}|}\\
 &\leq e^{(r_{n}+2n+|x_{2}|)^{2}+a_{2n}(1+r_{n}+|x_{1}|)}\sup_{t\in\R}e^{-t^{2}+a_{2n}t}\\
 &= e^{(r_{n}+2n+|x_{2}|)^{2}+a_{2n}(1+r_{n}+|x_{1}|)+a_{2n}^{2}/4}=:A_{2}(x,n)
 \end{align*}
 and observe that $A_{2}(\cdot,n)$ is continuous and thus locally bounded on $X_{I_{2}(n)}$.
 
 \prettyref{cond:dense} a)(iii): Let $K\subset\C$ be compact and $x=x_{1}+ix_{2}\in\Omega_{n}$. Then there 
 is $b>0$ such that $|y|\leq b$ for all $y=y_{1}+iy_{2}\in K$ and from polar coordinates and Fubini's theorem 
 follows that
 \begin{flalign*}
 &\hspace{0.37cm}\int_{K}\frac{|g_{n}(x-y)|}{|x-y|}\d y\\
 &\leq\underbrace{\sup_{w\in K}e^{a_{2n}|\re(w)|}}_{=:C_{1}}
  \int_{K}\frac{e^{-\re((x-y)^{2})}}{|x-y|}e^{-a_{2n}|y_{1}|}\d y\\
 &\leq C_{1}\bigl( \int_{\mathbb{B}_{1}(x)}\frac{e^{-\re((x-y)^{2})}}{|x-y|}e^{-a_{2n}|\re(y)|}\d y
  +\int_{K\setminus \mathbb{B}_{1}(x)}\frac{e^{-\re((x-y)^{2})}}{|x-y|}e^{-a_{2n}|\re(y)|}\d y\bigr)\\
 &\leq C_{1}\bigl(\int_{0}^{2\pi}\int_{0}^{1}\frac{e^{-r^{2}\cos(2\varphi)}}{r}e^{-a_{2n}|x_{1}+r\cos(\varphi)|}r\d r\d\varphi
  + \int_{K\setminus \mathbb{B}_{1}(x)}e^{-\re((x-y)^{2})}e^{-a_{2n}|\re(y)|}\d y\bigr)\\
 &\leq C_{1}\bigl(2\pi e^{1+a_{2n}}e^{-a_{2n}|x_{1}|}
      +\int_{-b}^{b}e^{(x_{2}-y_{2})^{2}}\d y_{2}
       \int_{\R}e^{-(x_{1}-y_{1})^{2}+a_{2n}|x_{1}-y_{1}|}\d y_{1}e^{-a_{2n}|x_{1}|}\bigr)\\
 &\leq C_{1}\bigl(2\pi e^{1+a_{2n}}+2be^{(|x_{2}|+b)^{2}}\int_{\R}e^{-y_{1}^{2}+a_{2n}|y_{1}|}\d y_{1}\bigr)e^{-a_{2n}|x_{1}|}\\
 &= C_{1}\bigl(2\pi e^{1+a_{2n}}+2be^{(|x_{2}|+b)^{2}}e^{a_{2n}^{2}/4}
       \int_{\R}e^{-(|y_{1}|-a_{2n}/2)^{2}}\d y_{1}\bigr)e^{-a_{2n}|x_{1}|}\\
 &= C_{1}\bigl(2\pi e^{1+a_{2n}}+4be^{(|x_{2}|+b)^{2}}e^{a_{2n}^{2}/4}
       \int_{-a_{2n}/2}^{\infty}e^{-y_{1}^{2}}\d y_{1}\bigr)e^{-a_{2n}|x_{1}|}\\      
 &\leq  C_{1}\bigl(2\pi e^{1+a_{2n}}+4\sqrt{\pi}be^{(n+b)^{2}+a_{2n}^{2}/4}\bigr)e^{-a_{2n}|x_{1}|}.
 \end{flalign*}
 We conclude that \prettyref{cond:dense} a)(iii) holds since 
 \[
 e^{-a_{2n}|x_{1}|}e^{a_{n}|\re(x)|^{\gamma}}\leq e^{(a_{n}-a_{2n})|x_{1}|+a_{n}}\leq e^{a_{n}}.
 \]
 
 \prettyref{cond:dense} b): Let $p,k\in\N$ with $p\leq k$. For all $x=x_{1}+ix_{2}\in\Omega_{p}$ 
 and $y=y_{1}+iy_{2}\in\Omega_{I_{4}(n)}$ we note that
 \[
       a_{p}|\re(x)|^{\gamma}-a_{k}|\re(y)|^{\gamma}
  \leq a_{k}|x_{1}-y_{1}|^{\gamma}\leq a_{k}(1+|x_{1}-y_{1}|)
 \]
 because $(a_{n})_{n\in\N}$ is non-negative and increasing and $0<\gamma\leq 1$. Like before we deduce that
 \begin{flalign*}
 &\hspace{0.37cm}\int_{\Omega_{I_{4}(n)}}\frac{|g_{n}(x-y)|\nu_{p}(x)}{|x-y|\nu_{k}(y)}\d y\\
 &=\int_{\Omega_{2n}}\frac{e^{-\re((x-y)^{2})}}{|x-y|}e^{a_{p}|\re(x)|^{\gamma}-a_{k}|\re(y)|^{\gamma}}\d y
  \leq \int_{\Omega_{2n}}\frac{e^{-\re((x-y)^{2})}}{|x-y|}e^{a_{k}|\re(x)-\re(y)|^{\gamma}}\d y\\
 &\leq \int_{0}^{2\pi}\int_{0}^{1}\frac{e^{-r^{2}\cos(2\varphi)}}{r}e^{a_{k}r^{\gamma}}r\d r\d\varphi
  + \int_{\Omega_{2n}\setminus \mathbb{B}_{1}(x)}e^{-\re((x-y)^{2})}e^{a_{k}|\re(x)-\re(y)|^{\gamma}}\d y\\
 &\leq 2\pi e^{1+a_{k}}+e^{a_{k}}\int_{-2n}^{2n}e^{(x_{2}-y_{2})^{2}}\d y_{2}
        \int_{\R}e^{-(x_{1}-y_{1})^{2}+a_{k}|x_{1}-y_{1}|}\d y_{1}\\
 &\leq 2\pi e^{1+a_{k}}+8\sqrt{\pi}ne^{a_{k}+(|x_{2}|+2n)^{2}+a_{k}^{2}/4}\\
 &\leq 2\pi e^{1+a_{I_{14}(n)}}+8\sqrt{\pi}ne^{a_{I_{14}(n)}+(I_{14}(n)+2n)^{2}+a_{I_{14}(n)}^{2}/4}
\end{flalign*}
for $(k,p)=(I_{4}(n),n)$ and $(k,p)=(I_{14}(n),I_{14}(n))$ as $(a_{n})_{n\in\N}$ is non-negative and increasing.
\end{proof}

\section{Surjectivity of the Cauchy-Riemann operator}
In our last section we prove our main result on the surjectivity of the Cauchy-Riemann operator 
on $\mathcal{EV}(\C,E)$ where $\Omega_{n}:=\{z\in\C\;|\;|\im(z)|<n\}$ for all $n\in\N$. 
We recall the corresponding result for $E=\C$ which we will need. It is a consequence 
of the approximation \prettyref{thm:dense_proj_lim} in combination with H\"ormander's solution of the 
$\overline{\partial}$-problem in weighted $L^{2}$-spaces \cite[Theorem 4.4.2, p.\ 94]{H3} and the Mittag-Leffler procedure. 

\begin{thm}[{\cite[4.8 Theorem, p.\ 20]{kruse2018_5}}]\label{thm:scalar_CR_surjective}
Let \prettyref{cond:weights} with $\psi_{n}(z):=(1+|z|^{2})^{-2}$, $z\in\Omega$, and 
\prettyref{cond:dense} with $I_{214}(n)\geq I_{14}(n+1)$ be fulfilled and $-\ln\nu_{n}$ be subharmonic 
on $\Omega$ for every $n\in\N$. Then
\[
\overline{\partial}\colon \mathcal{EV}(\Omega)\to\mathcal{EV}(\Omega)
\]
is surjective.
\end{thm}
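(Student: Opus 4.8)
The plan is to follow the classical three-step scheme for the surjectivity of $\overline{\partial}$ on a weighted Fr\'echet space realised as a weakly reduced projective limit: local solvability with weighted $L^{2}$-estimates, passage to the weighted sup-seminorms defining $\mathcal{EV}(\Omega)$, and a Mittag-Leffler gluing whose approximation input is \prettyref{thm:dense_proj_lim}. Given $f\in\mathcal{EV}(\Omega)$, I would first solve the equation locally on each $\Omega_{n}$ by H\"ormander's theorem. Every open subset of $\C$, in particular each $\Omega_{n}$, is pseudoconvex, and by hypothesis $\varphi_{n}:=-2\ln\nu_{n}$ is subharmonic; hence \cite[Theorem 4.4.2, p.\ 94]{H3} with weight $e^{-\varphi_{n}}=\nu_{n}^{2}$ yields $v_{n}\in\mathcal{C}^{\infty}(\Omega_{n})$ (smoothness by hypoellipticity of $\overline{\partial}$) with $\overline{\partial}v_{n}=f$ on $\Omega_{n}$ and
\[
\int_{\Omega_{n}}|v_{n}|^{2}\nu_{n}^{2}(1+|z|^{2})^{-2}\,\d\lambda
\leq\int_{\Omega_{n}}|f|^{2}\nu_{n}^{2}\,\d\lambda.
\]
The right-hand side is finite: by $(\omega.2)^{q}$ with $\psi_{n}(z)=(1+|z|^{2})^{-2}$ one has $|f|\nu_{n}\leq C_{2}(n)\psi_{n}|f|_{J_{2}(n),0}$, so the integrand is dominated by a constant multiple of $(1+|z|^{2})^{-4}$, which is integrable over $\R^{2}$. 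This produces a particular solution on each level whose $L^{2}$-bound is governed by a single seminorm of $f$.

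Next I would convert this $L^{2}$-bound into the sup-seminorms $|\cdot|_{m,k}$ at the cost of lowering the index. For $x\in\Omega_{m}$ with $m<n$ I would split $v_{n}$ on a small ball $\mathbb{B}_{\rho_{m}}(x)\subset\Omega_{n}$ into a holomorphic part and the (bounded) Cauchy transform of $f\!\restriction_{\mathbb{B}_{\rho_{m}}(x)}$, which solves $\overline{\partial}$ there; the sub-mean-value inequality controls the holomorphic part by its $L^{2}$-mass, and $(\omega.1)$ lets me exchange $\nu_{m}(x)$ for $\nu_{n}$ under the integral over the ball, as in \prettyref{prop:equiv_seminorms}~a). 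Differentiating the equation and applying Cauchy's inequality to the holomorphic part handles the seminorms of order $k\geq 1$, using that all derivatives of $f$ are themselves controlled because $f\in\mathcal{EV}(\Omega)$. Thus each $v_{n}$ lies in $\mathcal{E}\nu_{m}(\Omega_{m})$ for a suitable $m=m(n)$, and along a cofinal sequence of strips I obtain smooth local solutions with controlled weighted seminorms.

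Finally I would glue by Mittag-Leffler. On the common domain $\Omega_{n}$ the differences $h_{n}:=v_{n+1}-v_{n}$ satisfy $\overline{\partial}h_{n}=0$, hence $h_{n}\in\mathcal{O}\nu_{n}(\Omega_{n})$. By \prettyref{thm:dense_proj_lim}, iterated as in \prettyref{cor:weakly_reduced} (this is exactly where $I_{214}(n)\geq I_{14}(n+1)$ enters), each $h_{n}$ can be approximated in $|\cdot|_{n}$ by a global holomorphic function in $\mathcal{OV}(\Omega)=\ker\overline{\partial}\cap\mathcal{EV}(\Omega)$ up to $\varepsilon 2^{-n}$. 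Correcting the $v_{n}$ by the telescoped global approximants, which lie in $\ker\overline{\partial}$ and hence do not alter $\overline{\partial}v_{n}=f$, produces a sequence that is Cauchy in every seminorm $|\cdot|_{m}$; its limit $u$ exists by completeness of the Fr\'echet space $\mathcal{EV}(\Omega)$, satisfies $\overline{\partial}u=f$ by continuity of $\overline{\partial}$, and lies in $\mathcal{EV}(\Omega)$.

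I expect the main obstacle to be the second step: the transition from H\"ormander's $L^{2}$-estimate, which inevitably loses the factor $(1+|z|^{2})^{-2}$, to the weighted sup-seminorms, together with the bookkeeping that keeps the resulting index shifts compatible with the approximation hypothesis $I_{214}(n)\geq I_{14}(n+1)$. The conditions $(\omega.1)$, $(\omega.2)^{q}$ and the explicit choice $\psi_{n}(z)=(1+|z|^{2})^{-2}$ are tailored precisely to make this conversion and the index matching work; once they are in place the Mittag-Leffler step is essentially formal, resting entirely on \prettyref{thm:dense_proj_lim}.
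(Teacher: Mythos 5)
Your proposal follows exactly the route the paper indicates for this theorem (which it recalls from \cite[4.8 Theorem, p.\ 20]{kruse2018_5} rather than reproving): H\"ormander's weighted $L^{2}$ solution of the $\overline{\partial}$-problem \cite[Theorem 4.4.2, p.\ 94]{H3} on each $\Omega_{n}$, conversion to the weighted sup-seminorms via $(\omega.1)$ and $(\omega.2)^{q}$ with $\psi_{n}(z)=(1+|z|^{2})^{-2}$, and a Mittag-Leffler gluing whose approximation input is \prettyref{thm:dense_proj_lim} under the index condition $I_{214}(n)\geq I_{14}(n+1)$. This is essentially the same proof, so there is nothing substantive to add.
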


An application of this theorem yields the following corollary.

\begin{cor}[{\cite[4.10 Example a), p.\ 22]{kruse2018_5}}]\label{cor:scalar_surj_CR_example}
Let $(a_{n})_{n\in\N}$ be strictly increasing, $a_{n}<0$ for all $n\in\N$, 
$\mathcal{V}:=(\exp(a_{n}\mu))_{n\in\N}$ and $\Omega_{n}:=\{z\in\C\;|\;|\im(z)|<n\}$ for all $n\in\N$ 
where 
\[
 \mu\colon\C \to [0,\infty),\;\mu(z):=|\re(z)|^{\gamma},
\]
for some $0<\gamma\leq 1$. 
Then
\[
\overline{\partial}\colon \mathcal{EV}(\C)\to\mathcal{EV}(\C)
\]
is surjective.
\end{cor}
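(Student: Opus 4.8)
The plan is to apply \prettyref{thm:scalar_CR_surjective}: it suffices to verify, for the data $\nu_{n}=\exp(a_{n}|\re(\cdot)|^{\gamma})$ and $\Omega_{n}=S_{n}$, the three hypotheses \prettyref{cond:weights} with $\psi_{n}(z)=(1+|z|^{2})^{-2}$, \prettyref{cond:dense} with $I_{214}(n)\geq I_{14}(n+1)$, and subharmonicity of $-\ln\nu_{n}$ on $\C$. The subharmonicity requires care: since $a_{n}<0$ one has $-\ln\nu_{n}(z)=|a_{n}|\,|\re(z)|^{\gamma}$, and for $0<\gamma<1$ the map $x_{1}\mapsto|x_{1}|^{\gamma}$ is concave on $(0,\infty)$, so $-\ln\nu_{n}$ is in fact superharmonic. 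I would circumvent this by passing to the equivalent weight generator $\widetilde{\mu}(z):=|z|^{\gamma}$. On $S_{n}$ the bound $|\re(z)|^{\gamma}\leq|z|^{\gamma}\leq|\re(z)|^{\gamma}+n^{\gamma}$ forces the quotient $\widetilde{\nu}_{n}/\nu_{n}$, with $\widetilde{\nu}_{n}:=\exp(a_{n}|z|^{\gamma})$, to lie in $[e^{a_{n}n^{\gamma}},1]$; hence $\nu_{n}$ and $\widetilde{\nu}_{n}$ generate the same Fr\'echet space $\mathcal{EV}(\C)$ with equivalent fundamental systems of seminorms, and surjectivity of $\overline{\partial}$ is insensitive to the replacement. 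For $\widetilde{\mu}$ one computes $\Delta|z|^{\gamma}=\gamma^{2}|z|^{\gamma-2}\geq0$, so $-\ln\widetilde{\nu}_{n}=|a_{n}|\,|z|^{\gamma}$ is subharmonic.

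For \prettyref{cond:weights} I would invoke \prettyref{ex:weights_little_omega} a) with $I:=\{2\}$, $I_{0}:=\{1\}$ and the strips taken from its case (iv): this yields $(\omega.1)$ and $(\omega.2)^{q}$ for every $q\in\N$ with $\psi_{n}(z)=(1+|z|^{2})^{-2}$. Because both conditions only compare weights on $\Omega_{n}$ up to multiplicative constants, they are stable under the passage from $\nu_{n}$ to the equivalent $\widetilde{\nu}_{n}$, so they hold for the weight system actually feeding into \prettyref{thm:scalar_CR_surjective}.

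The substantial part is \prettyref{cond:dense}, which is precisely the statement quoted as \cite[4.10 Example a)]{kruse2018_5} and parallels the $a_{n}\geq0$ computation displayed in the proof of \prettyref{cor:omega_for_strips}. Concretely I would choose $g_{n}(z):=\exp(-z^{2})$, set $I_{j}(n):=2n$ for $j\in\{1,2,4\}$ (so $I_{214}(n)=8n\geq 4(n+1)=I_{14}(n+1)$ and $\d_{n,k}=|n-k|$), let $X_{I_{2}(n)}:=\overline{S}_{4n}^{C}$, and pick $r_{n},R_{n}$ of order $1/n$. Parts a)(i) and c) are handled as in \cite[4.10 Example a)]{kruse2018_5}; part a)(ii) follows by estimating $|g_{n}(\zeta)|e^{a_{2n}|\re(z)|^{\gamma}}$ on the circle $|\zeta-(z-x)|=r_{n}$ and completing the square in the Gaussian exponent; and the decisive integral estimates a)(iii) and b) reduce, via polar coordinates near the singularity of $1/|x-y|$ and a product of one-dimensional Gaussians away from it, to the convergence of $\int_{\R}e^{-t^{2}+a_{k}|t|}\,\d t$, the real-direction decay of $\exp(-z^{2})$ dominating the weight. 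With all three hypotheses verified, \prettyref{thm:scalar_CR_surjective} gives the surjectivity of $\overline{\partial}$ on $\mathcal{EV}(\C)$.

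The genuine obstacle is \prettyref{cond:dense} b), the bound on $\int_{\Omega_{I_{4}(n)}}|g_{I_{14}(n)}(x-y)|\nu_{p}(x)/(|x-y|\nu_{k}(y))\,\d y$, required uniformly in $x\in\Omega_{p}$ for both $(k,p)=(I_{4}(n),n)$ and $(k,p)=(I_{14}(n),I_{14}(n))$. One must exploit the subadditivity $|\re(x)|^{\gamma}-|\re(y)|^{\gamma}\leq|\re(x)-\re(y)|^{\gamma}$ (valid as $0<\gamma\leq1$) together with the monotonicity of $(a_{n})_{n\in\N}$ so that the entire weight ratio collapses into the convergent Gaussian integral, independently of $x$; keeping track of the nested indices $I_{214},I_{14},I_{4}$ while doing so is what turns an otherwise routine Gaussian estimate into the delicate point of the proof.
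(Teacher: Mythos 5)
Your strategy is the paper's own: \prettyref{cor:scalar_surj_CR_example} is quoted from \cite[4.10 Example a), p.\ 22]{kruse2018_5}, which applies \prettyref{thm:scalar_CR_surjective} exactly as you propose --- with the weight generator $\widetilde{\mu}(z)=|z|^{\gamma}$ replacing $|\re(z)|^{\gamma}$ because $-\ln\nu_{n}$ fails to be subharmonic for $0<\gamma<1$, with the strip estimate $|\re(z)|^{\gamma}\leq|z|^{\gamma}\leq|\re(z)|^{\gamma}+n^{\gamma}$ showing the replacement is harmless, and with $g_{n}(z)=\exp(-z^{2})$, $I_{j}(n)=2n$, $X_{I_{2}(n)}=\overline{S}_{4n}^{C}$ and the subadditivity of $t\mapsto t^{\gamma}$ reducing \prettyref{cond:dense} to convergent Gaussian integrals; compare the parallel $a_{n}\geq 0$ computation in the proof of \prettyref{cor:omega_for_strips} and the remark made there on the case $a_{n}<0$.

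One justification needs repair, although the fact it aims at is true. You claim that \prettyref{cond:weights} passes from $\nu_{n}=\exp(a_{n}|\re(\cdot)|^{\gamma})$ to $\widetilde{\nu}_{n}=\exp(a_{n}|\cdot|^{\gamma})$ because the conditions ``only compare weights on $\Omega_{n}$ up to multiplicative constants''. That premise is inaccurate: $(\omega.1)$ and $(\omega.2)^{q}$ are demanded at every $x\in\Omega_{k}$ for every $k$, hence on all of $\C$, and the two families are not globally equivalent, since $\nu_{m}/\widetilde{\nu}_{m}=\exp\bigl(|a_{m}|(|z|^{\gamma}-|\re(z)|^{\gamma})\bigr)$ grows like $\exp(|a_{m}||\im(z)|^{\gamma})$. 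The strip-wise equivalence constants blow up with $k$, so, for instance, $(\omega.2)^{q}$ for $\widetilde{\nu}$ cannot be deduced from $(\omega.2)^{q}$ for $\nu$ in this way. (The transfer argument is legitimate for \prettyref{cond:dense}, where every weight is evaluated on a fixed strip determined by $n$, but not for \prettyref{cond:weights}.) The repair is immediate: verify \prettyref{cond:weights} for $\widetilde{\nu}_{n}$ directly. From $\bigl||w|^{\gamma}-|w'|^{\gamma}\bigr|\leq|w-w'|^{\gamma}$ one obtains $(\omega.1)$ with $J_{1}(n)=n$, $\rho_{k}=1/2$ and the $k$-independent constant $\exp(|a_{n}|2^{\gamma/2})$, and $(\omega.2)^{q}$ holds with $J_{2}(n)=n+1$ and $\psi_{n}(z)=(1+|z|^{2})^{-2}$ because $(1+|z|^{2})^{2}\exp\bigl((a_{n}-a_{n+1})|z|^{\gamma}\bigr)$ is bounded on $\C$. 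With this substitution your proof is complete and coincides with the paper's.
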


The restriction to negative $a_{n}$ comes from the condition that $-\ln\nu_{n}$ should be subharmonic. 
We note that the $E$-valued versions of 
\prettyref{thm:scalar_CR_surjective} and \prettyref{cor:scalar_surj_CR_example} where 
$E$ is a Fr\'echet space over $\C$ hold as well by the classical theory of tensor products for nuclear Fr\'echet spaces 
(see \cite[4.9 Corollary, p.\ 21]{kruse2018_5}). Since we will use the $\varepsilon$-product 
$\mathcal{EV}(\Omega)\varepsilon E$ to enlarge our collection of locally convex Hausdorff space $E$ 
for which $\overline{\partial}^{E}$ is surjective, we remark the following (cf.\ \cite[5.23 Lemma, p.\ 92]{ich}). 

\begin{prop}\label{prop:isom_op_spaces}
\begin{enumerate}
\item [a)] Let $X$ be a semi-reflexive locally convex Hausdorff space and $Y$ a Fr\'echet space. 
Then $L_{b}(X_{b}',Y_{b}')\cong L_{b}(Y,(X_{b}')_{b}')$ via taking adjoints.
\item [b)] Let $X$ be a Montel space and $E$ a locally convex Hausdorff space. 
Then $L_{b}(X_{b}',E)\cong X\varepsilon E$ where the topological isomorphism is the identity map.
\end{enumerate}
\end{prop}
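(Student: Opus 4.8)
The plan is to establish both parts through the general calculus of transposes and strong biduals, the two hypotheses serving precisely to make the relevant canonical embeddings continuous. For part a) I would define the two candidate maps by transposition followed by the canonical embedding into the strong bidual: for $T\in L(X_{b}',Y_{b}')$ put $\Phi(T):=T^{t}\circ\kappa_{Y}$, where $T^{t}\colon(Y_{b}')_{b}'\to(X_{b}')_{b}'$ is the transpose and $\kappa_{Y}\colon Y\to(Y_{b}')_{b}'$ is the canonical map into the bidual, and for $S\in L(Y,(X_{b}')_{b}')$ put $\Psi(S):=S^{t}\circ\kappa_{X_{b}'}$, where $\kappa_{X_{b}'}\colon X_{b}'\to((X_{b}')_{b}')_{b}'$ is the canonical map. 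The transpose of a continuous linear map is always continuous for the strong topologies, so the only issue in well-definedness is the continuity of the two canonical embeddings into the respective strong biduals. This is exactly where the hypotheses are used: since $Y$ is a Fr\'echet space it is barrelled, whence $\kappa_{Y}$ is continuous, and since $X$ is semi-reflexive its strong dual $X_{b}'$ is barrelled, whence $\kappa_{X_{b}'}$ is continuous. Consequently $\Phi(T)\in L(Y,(X_{b}')_{b}')$ and $\Psi(S)\in L(X_{b}',Y_{b}')$.

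Next I would verify that $\Phi$ and $\Psi$ are mutually inverse and bicontinuous by a direct computation with the dual pairing. Unwinding the definitions of transpose and canonical embedding yields $\langle\Psi\Phi(T)(a),y\rangle=\langle Ta,y\rangle$ and $\langle\Phi\Psi(S)(y),a\rangle=\langle Sy,a\rangle$ for all $a\in X_{b}'$ and $y\in Y$, so that $\Psi\Phi=\operatorname{id}$ and $\Phi\Psi=\operatorname{id}$ and $\Phi$ is a bijection with inverse $\Psi$. For the topology I would match the defining seminorm systems exactly: writing $p_{A,B}(T):=\sup_{a\in A,\,y\in B}|\langle Ta,y\rangle|$ for $A$ bounded in $X_{b}'$ and $B$ bounded in $Y$, these $p_{A,B}$ form a fundamental system for $L_{b}(X_{b}',Y_{b}')$ because the bounded subsets of $Y$ generate the strong topology of $Y_{b}'$; analogously $q_{B,A}(S):=\sup_{y\in B,\,a\in A}|\langle a,Sy\rangle|$ form a fundamental system for $L_{b}(Y,(X_{b}')_{b}')$ because the bounded subsets of $X_{b}'$ generate the strong topology of $(X_{b}')_{b}'$. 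The same pairing computation then gives $q_{B,A}(\Phi(T))=p_{A,B}(T)$ and $p_{A,B}(\Psi(S))=q_{B,A}(S)$, so $\Phi$ and $\Psi$ are mutually inverse isometries for these seminorms, hence topological isomorphisms. I expect the continuity of the two canonical embeddings into the strong biduals to be the only genuinely delicate point, and it is precisely there that the Fr\'echet property of $Y$ and the semi-reflexivity of $X$ are indispensable.

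For part b) I would use the Montel hypothesis to collapse the two topologies in play. Since $X$ is a Montel space, every bounded subset of $X$ is relatively compact, and as $X$ is complete the closed absolutely convex hull of a compact set is again compact; hence uniform convergence on bounded sets and on absolutely convex compact sets agree on $X'$, that is $X_{b}'=X_{\kappa}'$ as locally convex spaces, and in particular $L(X_{b}',E)=L(X_{\kappa}',E)$ as vector spaces. Moreover $X$ is barrelled, so by the Banach--Steinhaus theorem the bounded subsets of $X_{b}'$ are exactly the equicontinuous subsets of $X'$. Therefore the topology of uniform convergence on bounded subsets of $X_{b}'$ coincides with the topology of uniform convergence on equicontinuous subsets of $X'$, which by definition is the topology of $X\varepsilon E=L_{e}(X_{\kappa}',E)$; this identifies $L_{b}(X_{b}',E)$ with $X\varepsilon E$ through the identity map. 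This part is routine bookkeeping with the Montel and barrelledness properties and presents no real obstacle.
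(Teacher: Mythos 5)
Your proof is correct, and while the maps themselves are forced by the statement (``via taking adjoints''), you justify their well-definedness by a genuinely different route than the paper. The paper verifies everything by hand: for the forward map it bounds ${^{t}u}(y)$ directly and uses that the canonical embedding $J\colon Y\to (Y_{b}')_{b}'$ of the Fr\'echet space $Y$ is a topological isomorphism onto its range; for the inverse map it uses semi-reflexivity to pull each $u(y)\in (X_{b}')'$ back to an element $x_{u(y)}\in X$, shows the resulting set is weakly bounded, and invokes Mackey's theorem to conclude boundedness. You instead factor both maps as a strong transpose composed with a canonical embedding into a strong bidual, $\Phi(T)=T^{t}\circ\kappa_{Y}$ and $\Psi(S)=S^{t}\circ\kappa_{X_{b}'}$, and reduce everything to two structural facts: strong transposes of continuous maps are continuous, and the embedding of a quasi-barrelled space into its strong bidual is continuous --- applied to $Y$ (Fr\'echet, hence barrelled) and to $X_{b}'$ via the theorem that semi-reflexivity of $X$ is equivalent to barrelledness of $X_{b}'$ (Schaefer, Ch.~IV, 5.5). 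Your version is shorter and isolates the exact hypotheses needed, at the price of citing a duality theorem the paper avoids; the paper's version is self-contained modulo Mackey's theorem and the bidual embedding for Fr\'echet spaces. The mutual-inverse and bicontinuity computations (matching the seminorms $p_{A,B}$ and $q_{B,A}$) are identical to the paper's, and your part b) is in substance the paper's argument, just phrased as the single identity $X_{b}'=X_{\kappa}'$ plus Banach--Steinhaus rather than four separate continuity checks. One small imprecision: Montel spaces need not be complete, only quasi-complete (closed bounded sets are compact, hence complete), but quasi-completeness already suffices for the closed absolutely convex hull of a compact set to be compact, so your argument stands.
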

\begin{proof}
$a)$ We consider the map
\[
{^{t}(\cdot)}\colon L_{b}(X_{b}',Y_{b}')\to L_{b}(Y,(X_{b}')_{b}'),\; u\mapsto {^{t}u},
\]
defined by ${^{t}u}(y)(x'):=u(x')(y)$ for $y\in Y$ and $x'\in X'$.
First, we prove that ${^{t}(\cdot)}$ is well-defined. Let $u\in L(X_{b}',Y_{b}')$ and $y\in Y$. 
Since $u\in L(X_{b}',Y_{b}')$ and 
$\{y\}$ is bounded in $Y$, there are a bounded set $B\subset X$ and $C>0$ such that
\[
|{^{t}u}(y)(x')|=|u(x')(y)|\leq C\sup_{x\in B}|x'(x)|
\]
for all $x'\in X'$ implying ${^{t}u}(y)\in(X_{b}')'$.
  
Let us denote by $(\|\cdot\|_{Y,n})_{n\in\N}$ the (directed) system of seminorms generating the metrisable locally convex topology of $Y$. 
The canonical embedding $J\colon Y\to (Y_{b}')_{b}'$ is a topological isomorphism between $Y$ and $J(Y)$ 
by \cite[Corollary 25.10, p.\ 298]{meisevogt1997} because $Y$ is a Fr\'echet space. 
For a bounded set $M\subset X_{b}'$ we note that
\[
 \sup_{x'\in M}|{^{t}u}(y)(x')|
=\sup_{x'\in M}|u(x')(y)|
=\sup_{x'\in M}|\langle J(y),u(x')\rangle|.
\]
The next step is to prove that $u(M)$ is bounded in $Y_{b}'$. 
Let $N\subset Y$ be bounded. Since $u\in L(X_{b}',Y_{b}')$, there is again a bounded set $B\subset X$ 
and a constant $C>0$ such that
\[
\sup_{x'\in M}\sup_{y\in N}|u(x')(y)|\leq C\sup_{x'\in M}\sup_{x\in B}|x'(x)|<\infty,
\]
where the last estimate follows from the boundedness of $M\subset X_{b}'$. 
Hence $u(M)$ is bounded in $Y_{b}'$.
By the remark about the canonical embedding there are $n\in\N$ and $C_{0}>0$ such that
\[
 \sup_{x'\in M}|{^{t}u}(y)(x')|
=\sup_{y'\in u(M)}|\langle J(y),y'\rangle|
\leq C_{0}\|y\|_{Y,n},
\]
so ${^{t}u}\in L(Y,(X_{b}')_{b}')$ and the map ${^{t}(\cdot)}$ is well-defined.

Let us turn to injectivity. Let $u,v\in L(X_{b}',Y_{b}')$ with ${^{t}u}={^{t}v}$. This is equivalent to
\[
u(x')(y)={^{t}u}(y)(x')={^{t}v}(y)(x')=v(x')(y)
\]
for all $y\in Y$ and $x'\in X'$. This implies $u(x')=v(x')$ for all $x'\in X'$, hence $u=v$.

Next, we turn to surjectivity. We consider the map
\[
{^{t}(\cdot)}\colon L_{b}(Y,(X_{b}')_{b}')\to L_{b}(X_{b}',Y_{b}'),\; u\mapsto {^{t}u},
\]
defined by ${^{t}u}(x')(y):=u(y)(x')$ for $x'\in X'$ and $y\in Y$. We show that this map is well-defined.
Let $u\in L_{b}(Y,(X_{b}')_{b}')$ and $x'\in X'$. 
Since $u\in L_{b}(Y,(X_{b}')_{b}')$ and $\{x'\}$ is bounded in $X'$, there are $n\in\N$ and $C>0$ such that
\[
|{^{t}u}(x')(y)|=|u(y)(x')|\leq C\|y\|_{Y,n}
\]
for all $y\in Y$ yielding to ${^{t}u}(x')\in Y'$.
Let $B\subset Y$ be bounded. The semi-reflexivity of $X$ implies that for every $u(y)$, $y\in B$, 
there is a unique $x_{u(y)}\in X$ such that $u(y)(x')=x'(x_{u(y)})$ for all $x'\in X'$. Then we get
\[
 \sup_{y\in B}|{^{t}u}(x')(y)|
=\sup_{y\in B}|u(y)(x')|
=\sup_{y\in B}|x'(x_{u(y)})|.
\]
We claim that $D:=\{x_{u(y)}\;|\;y\in B\}$ is a bounded set in $X$. Let $N\subset X'$ be finite. 
Then the set $M:=\{{^{t}u}(x')\;|\;x'\in N\}\subset Y'$ is finite. 
We have
\[
 \sup_{y\in B}\sup_{x'\in N}|x'(x_{u(y)})|
=\sup_{y\in B}\sup_{x'\in N}|{^{t}u}(x')(y)|
=\sup_{y\in B}\sup_{y'\in M}|y'(y)|<\infty
\]
where the last estimate follows from the fact that the bounded set $B$ is weakly bounded. 
Thus $D$ is weakly bounded and by \cite[Mackey's theorem 23.15, p.\ 268]{meisevogt1997} bounded in $X$. 
Therefore, it follows from
\[
 \sup_{y\in B}|{^{t}u}(x')(y)|
=\sup_{y\in B}|x'(x_{u(y)})|
=\sup_{x\in D}|x'(x)|
\]
for all $x'\in X'$ that ${^{t}u}\in L(X_{b}',Y_{b}')$ which means that ${^{t}(\cdot)}$ is well-defined.
Let $u\in L(Y,(X_{b}')_{b}')$. Then we have ${^{t}u}\in L_{b}(X_{b}',Y_{b}')$. 
In addition, for all $y\in Y$ and all $x'\in X'$
\[
{^{t}({^{t}u})}(y)(x')={^{t}u}(x')(y)=u(y)(x')
\]
is valid and so ${^{t}({^{t}u})}(y)=u(y)$ for all $y\in Y$ proving the surjectivity.

The last step is to prove the continuity of ${^{t}(\cdot)}$ and its inverse. 
Let $M\subset Y$ and $B\subset X_{b}'$ be bounded sets. Then
\begin{align*}
  \sup_{y\in M}\sup_{x'\in B}|{^{t}u}(y)(x')|
&=\sup_{y\in M}\sup_{x'\in B}|u(x')(y)|
 =\sup_{x'\in B}\sup_{y\in M}|u(x')(y)|\\
&=\sup_{x'\in B}\sup_{y\in M}|{^{t}({^{t}u})}(x')(y)|
\end{align*}
holds for all $u\in L(X_{b}',Y_{b}')$. Therefore, ${^{t}(\cdot)}$ and its inverse are continuous.
	 
$b)$ Let $T\in L(X_{b}',E)$. For $\alpha\in\mathfrak{A}$ there are a bounded set $B\subset X$ and $C>0$ such that
\[
p_{\alpha}(T(x'))\leq C\sup_{x\in B}|x'(x)|\leq C\sup_{x\in \oacx{(B)}}|x'(x)|
\]
for every $x'\in X'$. The set $\oacx{(B)}$ is absolutely convex and compact by \cite[6.2.1 Proposition, p.\ 103]{Jarchow} 
and \cite[6.7.1 Proposition, p.\ 112]{Jarchow} since $B$ is bounded in the Montel space $X$. 
Hence we gain $T\in L(X_{\kappa}',E)$.

Let $M\subset X'$ be equicontinuous. Due to \cite[8.5.1 Theorem (a), p.\ 156]{Jarchow} $M$ is bounded in $X_{b}'$. 
Therefore,
\[
\id\colon L_{b}(X_{b}',E)\to L_{e}(X_{\kappa}',E)=X\varepsilon E
\]
is continuous.

Let $T\in L(X_{\kappa}',E)$. For $\alpha\in\mathfrak{A}$ there are an absolutely convex compact set $B\subset X$ 
and $C>0$ such that
\[
p_{\alpha}(T(x'))\leq C\sup_{x\in B}|x'(x)|
\]
for every $x'\in X'$. Since the compact set $B$ is bounded, we get $T\in L(X_{b}',E)$.

Let $M$ be a bounded set in $X_{b}'$. Then $M$ is equicontinuous by virtue of \cite[Theorem 33.2, p.\ 349]{Treves}, 
as $X$, being a Montel space, is barrelled by \cite[Remark 24.24 (a), p.\ 286]{meisevogt1997}. 
Thus
\[
\id\colon L_{e}(X_{\kappa}',E)\to L_{b}(X_{b}',E)
\]
is continuous.
\end{proof}

Now, we use the results obtained so far and splitting theory to obtain our main theorem on the surjectivity of the 
Cauchy-Riemann operator on the space $\mathcal{EV}(\Omega,E)$. 
We recall that a Fr\'echet space $(F,(\norm{3}{\cdot}_{k})_{k\in\N})$ satisfies $(DN)$
by \cite[Chap.\ 29, Definition, p.\ 359]{meisevogt1997} if
\[
\exists\;p\in\N\;\forall\;k\in\N\;\exists\;n\in\N,\,C>0\;\forall\;x\in F:\;
\norm{3}{x}^{2}_{k}\leq C\norm{3}{x}_{p}\norm{3}{x}_{n}.
\]
A (PLS)-space is a projective limit $X=\lim\limits_{\substack{\longleftarrow\\N\in\N}}X_{N}$, where the
inductive limits $X_{N}=\lim\limits_{\substack{\longrightarrow\\n\in \N}}(X_{N,n},\norm{3}{\cdot}_{N,n})$ are (DFS)-spaces (which are also 
called (LS)-spaces), and it satisfies $(PA)$ if
\begin{gather*}
\forall \;N\;\exists\;  M\;  \forall\;  K\;  \exists\;  n\;  \forall\;  m\;  \forall\;  \eta >0\;  \exists\;  k,C,r_0 >0\;  \forall\;  r>r_0\; \forall\; x'\in X'_{N}:\\
	\norm{3}{x'\circ i^{M}_{N}}^{\ast}_{M,m}\leq C\bigl(r^{\eta}\norm{3}{x'\circ i^{K}_{N}}^{\ast}_{K,k}+\frac{1}{r}\norm{3}{x'}^{\ast}_{N,n}\bigr)
\end{gather*}
where $\norm{3}{\cdot}^{\ast}$ denotes the dual norm of $\norm{3}{\cdot}$ (see \cite[Section 4, Eq.\ (24), p.\ 577]{Dom1}).

\begin{thm}\label{thm:surj_CR_DN_PA}
Let \prettyref{cond:weights} with $\psi_{n}(z):=(1+|z|^{2})^{-2}$, $z\in\Omega$, and 
\prettyref{cond:dense} with $I_{214}(n)\geq I_{14}(n+1)$ be fulfilled and $-\ln\nu_{n}$ be subharmonic 
on $\Omega$ for every $n\in\N$. 
If $\mathcal{OV}(\Omega)$ satifies property $(\Omega)$ and 
\begin{enumerate}
\item [a)] $E:=F_{b}'$ where $F$ is a Fr\'echet space over $\C$ satisfying $(DN)$, or 
\item [b)] $E$ is an ultrabornological (PLS)-space over $\C$ satisfying $(PA)$, 
\end{enumerate}
then
\[
\overline{\partial}^{E}\colon \mathcal{EV}(\Omega,E)\to\mathcal{EV}(\Omega,E)
\]
is surjective.
\end{thm}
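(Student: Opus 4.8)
The plan is to realise $\overline{\partial}^{E}$ as the endomorphism induced by the scalar operator $\overline{\partial}$ on an $\varepsilon$-product and to decide its surjectivity by splitting theory. The starting point is the short exact sequence of Fr\'echet spaces
\[
0\longrightarrow \mathcal{OV}(\Omega)\longrightarrow \mathcal{EV}(\Omega)\xrightarrow{\;\overline{\partial}\;}\mathcal{EV}(\Omega)\longrightarrow 0,
\]
whose exactness combines three facts already at hand: the kernel of $\overline{\partial}$ in $\mathcal{EV}(\Omega)$ equals $\mathcal{EV}_{\overline{\partial}}(\Omega)=\mathcal{OV}(\Omega)$ by \prettyref{prop:equiv_seminorms}~b); the surjectivity of $\overline{\partial}$ holds under the present hypotheses by \prettyref{thm:scalar_CR_surjective}; and, since $\psi_{n}(z)=(1+|z|^{2})^{-2}\in L^{1}(\R^{2})$ yields $(\omega.2)^{1}$, the space $\mathcal{EV}(\Omega)$ is nuclear by \cite[3.1 Theorem, p.\ 12]{kruse2018_4}, hence reflexive and Montel, while $\mathcal{OV}(\Omega)$ is a nuclear Fr\'echet space carrying $(\Omega)$ by assumption. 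As both $F_{b}'$ and every ultrabornological (PLS)-space are complete, the isomorphism $\mathcal{EV}(\Omega,E)\cong\mathcal{EV}(\Omega)\varepsilon E$ of \cite[5.10 Example c), p.\ 24]{kruse2017} applies, and under it $\overline{\partial}^{E}$ is identified with the endomorphism $\overline{\partial}\varepsilon\id_{E}$ induced on the $\varepsilon$-product by $\overline{\partial}$ (the usual compatibility of a differential operator with the $\varepsilon$-product, which may be read off the pairing defining the isomorphism). The theorem thus amounts to showing that the functor $(\,\cdot\,)\,\varepsilon\,E$ preserves the surjectivity in the displayed sequence.

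For part a) I would unwind the $\varepsilon$-product with \prettyref{prop:isom_op_spaces}. Part b) of that proposition, applied with the Montel space $X=\mathcal{EV}(\Omega)$, gives $\mathcal{EV}(\Omega)\varepsilon F_{b}'\cong L_{b}\big((\mathcal{EV}(\Omega))_{b}',F_{b}'\big)$; part a), applied with the semi-reflexive $X=\mathcal{EV}(\Omega)$ and $Y=F$, together with the reflexivity of $\mathcal{EV}(\Omega)$ converts this into $L_{b}(F,\mathcal{EV}(\Omega))$. Tracking $\overline{\partial}\varepsilon\id$ through these adjoints, it becomes the post-composition map $S\mapsto\overline{\partial}\circ S$ on $L(F,\mathcal{EV}(\Omega))$, whose surjectivity is precisely the problem of lifting an arbitrary $T\in L(F,\mathcal{EV}(\Omega))$ along the surjection $\overline{\partial}$. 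The obstruction to this lifting is $\operatorname{Ext}^{1}(F,\mathcal{OV}(\Omega))$ in the category of Fr\'echet spaces, and it vanishes by Vogt's splitting theorem \cite[Theorem 2.6, p.\ 174]{vogt1983}, since $F$ satisfies $(DN)$ and the kernel $\mathcal{OV}(\Omega)$ satisfies $(\Omega)$; this proves a).

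For part b) the displayed sequence is a short exact sequence of nuclear Fr\'echet spaces whose kernel $\mathcal{OV}(\Omega)$ has $(\Omega)$, which is exactly the input required by the splitting theory of Bonet and Doma\'nski for (PLS)-spaces. Because $E$ is an ultrabornological (PLS)-space with $(PA)$, \cite[Corollary 3.9, p.\ 1112]{D/L} guarantees that $(\,\cdot\,)\,\varepsilon\,E$ leaves the sequence exact, so $\overline{\partial}\varepsilon\id_{E}$, and therefore $\overline{\partial}^{E}$, is surjective.

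The analytic content — H\"ormander's solution of the $\overline{\partial}$-equation and the approximation underlying $(\Omega)$ — is already encapsulated in \prettyref{thm:scalar_CR_surjective} and in the hypothesis on $\mathcal{OV}(\Omega)$, so the remaining work is essentially categorical. I expect the main obstacle to be the bookkeeping of the second and third steps: verifying that the chain of canonical isomorphisms in \prettyref{prop:isom_op_spaces} genuinely intertwines $\overline{\partial}\varepsilon\id$ with post-composition by $\overline{\partial}$, so that surjectivity is transported faithfully, and confirming that all hypotheses of the two abstract splitting results — completeness of $E$, reflexivity and nuclearity of $\mathcal{EV}(\Omega)$, property $(\Omega)$ of the kernel, and ultrabornologicity of $E$ in case b) — are genuinely in force.
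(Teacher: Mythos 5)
Your proposal is correct and takes essentially the same approach as the paper: the isomorphism $\mathcal{EV}(\Omega,E)\cong\mathcal{EV}(\Omega)\varepsilon E$, the scalar exact sequence $0\to\mathcal{OV}(\Omega)\to\mathcal{EV}(\Omega)\xrightarrow{\overline{\partial}}\mathcal{EV}(\Omega)\to 0$ obtained from \prettyref{thm:scalar_CR_surjective} and \prettyref{prop:equiv_seminorms}, and splitting theory --- Vogt's $(DN)$--$(\Omega)$ theorem in case a), the Bonet--Doma\'nski/Doma\'nski--Langenbruch (PLS)-theory in case b) --- transported through \prettyref{prop:isom_op_spaces}. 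The paper differs only in bookkeeping: it works with the biduals $\mathcal{OV}(\Omega)''$ and $\mathcal{EV}(\Omega)''$, it assembles case b) from $\operatorname{Ext}^{1}_{PLS}$- and $\operatorname{Proj}^{1}$-vanishing plus adjoint identifications rather than the single packaged corollary you cite, and it writes out in full the step you defer as ``bookkeeping,'' namely that under $\psi$ post-composition with $\overline{\partial}$ really corresponds to $\overline{\partial}^{E}$ (proved via difference quotients of point evaluations, the Montel property and Banach--Steinhaus).
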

\begin{proof}
Throughout this proof we use the notation $X'':=(X_{b}')_{b}'$ for a locally convex Hausdorff space $X$.
In both cases, $a)$ and $b)$, the space $E$ is a complete locally convex Hausdorff space. 
The space $\mathcal{EV}(\Omega)$ is a Fr\'echet space by \cite[3.4 Proposition, p.\ 6]{kruse2018_2} 
and $\mathcal{OV}(\Omega)$ as well since it is a closed subspace by \prettyref{prop:equiv_seminorms} b). 
Both spaces are also nuclear and thus reflexive by \cite[3.1 Theorem, p.\ 12]{kruse2018_4}, 
\cite[2.7 Remark, p.\ 5]{kruse2018_4} and \cite[2.3 Remark b), p.\ 3]{kruse2018_4} 
because $(\omega.1)$ and $(\omega.2)^{1}$ from \prettyref{cond:weights} are fulfilled. 
As a consequence the map 
\[
 S\colon\mathcal{EV}(\Omega)\varepsilon E\to\mathcal{EV}(\Omega,E),\; 
 u\longmapsto [z\mapsto u(\delta_{z})],
\]
is a topological isomorphism by \cite[5.10 Example c), p.\ 24]{kruse2017} 
where $\delta_{z}$ is the point-evaluation at $z\in\Omega$. 
We denote by $\mathcal{J}\colon E\to E'^{\ast}$ the canonical injection in the algebraic dual $E'^{\ast}$ 
of the topological dual $E'$ and for $f\in\mathcal{EV}(\Omega,E)$ we set
\[
 R_{f}^{t}\colon\mathcal{EV}(\Omega)'\to E'^{\star},\;
 y\longmapsto\bigl[e'\mapsto y(e'\circ f) \bigr].
\]
Then the map $f\mapsto \mathcal{J}^{-1}\circ R_{f}^{t}$ is the inverse of $S$ 
by \cite[3.14 Theorem, p.\ 9]{kruse2017}.
The sequence
\begin{equation}\label{thm19.1}
0\to\mathcal{OV}(\Omega)\overset{i}{\to}\mathcal{EV}(\Omega)
\overset{\overline{\partial}}{\to}\mathcal{EV}(\Omega)\to 0,
\end{equation}
where $i$ means the inclusion, is an exact sequence of Fr\'echet spaces by \prettyref{thm:scalar_CR_surjective} 
and hence topologically exact as well. 
Let us denote by $J_{0}\colon\mathcal{OV}(\Omega)\to\mathcal{OV}(\Omega)''$ and 
$J_{1}\colon\mathcal{EV}(\Omega)\to\mathcal{EV}(\Omega)''$ the canonical embeddings 
which are topological isomorphisms since $\mathcal{OV}(\Omega)$ and
$\mathcal{EV}(\Omega)$ are reflexive. 
Then the exactness of \eqref{thm19.1} implies that
\begin{equation}\label{thm19.2}
0\to\mathcal{OV}(\Omega)''\overset{i_{0}}{\to}\mathcal{EV}(\Omega)''
\overset{\overline{\partial}_{1}}{\to}\mathcal{EV}(\Omega)''\to 0,
\end{equation}
where $i_{0}:=J_{0}\circ i\circ J^{-1}_{0}$ and $\overline{\partial}_{1}:=J_{1}\circ \overline{\partial}\circ J^{-1}_{1}$, 
is an exact topological sequence. 
Topological as the (strong) bidual of a Fr\'echet space is again a Fr\'echet space 
by \cite[Corollary 25.10, p.\ 298]{meisevogt1997}.

$a)$ Let $E:=F_{b}'$ where $F$ is a Fr\'echet space with $(DN)$. 
Then $\operatorname{Ext}^{1}(F,\mathcal{OV}(\Omega)'')=0$ by \cite[5.1 Theorem, p.\ 186]{V1} 
since $\mathcal{OV}(\Omega)$ satisfies $(\Omega)$ and therefore $\mathcal{OV}(\Omega)''$ as well. 
Combined with the exactness of \eqref{thm19.2} this implies that the sequence
\[
0\to L(F,\mathcal{OV}(\Omega)'')\overset{i^{\ast}_{0}}{\to}L(F,\mathcal{EV}(\Omega)'')
\overset{\overline{\partial}^{\ast}_{1}}{\to}L(F,\mathcal{EV}(\Omega)'')\to 0
\]
is exact by \cite[Proposition 2.1, p.\ 13-14]{Pala} where $i^{\ast}_{0}(B):=i_{0}\circ B$ and
$\overline{\partial}^{\ast}_{1}(D):=\overline{\partial}_{1}\circ D$ for 
$B\in L(F,\mathcal{OV}(\Omega)'')$ and $D\in L(F,\mathcal{EV}(\Omega)'')$. 
In particular, we obtain that
\begin{equation}\label{thm19.3}
\overline{\partial}^{\ast}_{1}\colon L(F,\mathcal{EV}(\Omega)'')\to
L(F,\mathcal{EV}(\Omega)'')
\end{equation}
is surjective. 
Via $E=F_{b}'$ and \prettyref{prop:isom_op_spaces} ($X=\mathcal{EV}(\Omega)$ and $Y=F$) we have 
the topological isomorphism
\[
\psi:=S\circ{^{t}(\cdot)}\colon L(F,\mathcal{EV}(\Omega)'')\to 
\mathcal{EV}(\Omega,E),\; 
\psi(u)=\bigl(S\circ{^{t}(\cdot)}\bigr)(u)=\bigl[z\mapsto{^{t}u}(\delta_{z})\bigr],
\]
and the inverse
\[ 
 \psi^{-1}(f)
=(S\circ{^{t}(\cdot)})^{-1}(f)
=({^{t}(\cdot)}\circ S^{-1})(f)
={^{t}(\mathcal{J}^{-1}\circ R_{f}^{t})},\quad f\in\mathcal{EV}(\Omega,E).
\]
Let $g\in\mathcal{EV}(\Omega,E)$. 
Then $\psi^{-1}(g)\in L(F,\mathcal{EV}(\Omega)'')$ and by the surjectivity of \eqref{thm19.3} 
there is $u\in L(F,\mathcal{EV}(\Omega)'')$ such that 
$\overline{\partial}^{\ast}_{1}u=\psi^{-1}(g)$. 
So we get $\psi(u)\in\mathcal{EV}(\Omega,E)$. 
Next, we show that $\overline{\partial}^{E}\psi(u)=g$ is valid. 
Let $x\in F$, $z\in\Omega$ and $h\in\R$, $h\neq 0$, 
and $e_{k}$ denote the $k$th unit vector in $\R^{2}$. 
From 
\[
\bigl(\frac{\delta_{z+he_{k}}-\delta_{z}}{h}\bigr)(f)=\frac{f(z+he_{k})-f(z)}{h}\underset{h\to 0}{\to}\partial^{e_{k}}f(z),
\]
for every $f\in\mathcal{EV}(\Omega)$ follows that $\frac{\delta_{z+he_{k}}-\delta_{z}}{h}$ 
converges to $\delta_{z}\circ\partial^{e_{k}}$ in $\mathcal{EV}(\Omega)_{\sigma}'$. 
Since the nuclear Fr\'echet space $\mathcal{EV}(\Omega)$ is in particular a Montel space, we deduce 
that $\frac{\delta_{z+he_{k}}-\delta_{z}}{h}$ converges to $\delta_{z}\circ\partial^{e_{k}}$ in 
$\mathcal{EV}(\Omega)_{\gamma}'=\mathcal{EV}(\Omega)_{b}'$ 
by the Banach-Steinhaus theorem. Let $B\subset F$ be bounded. 
As ${^{t}u}\in L(\mathcal{EV}(\Omega)_{b}',F_{b}')$, there are a bounded set 
$B_{0}\subset\mathcal{EV}(\Omega)$ and $C>0$ such that 
\begin{flalign*}
&\hspace{0.37cm}\sup_{x\in B}\bigl|\bigl(\frac{{^{t}u}(\delta_{z+he_{k}})-{^{t}u}(\delta_{z})}{h}\bigr)(x)
                                   -{^{t}u}\bigl(\delta_{z}\circ\partial^{e_{k}}\bigr)(x)\bigr|\\
&=\sup_{x\in B}\bigl|{^{t}u}\bigl(\frac{\delta_{z+he_{k}}-\delta_{z}}{h}-\delta_{z}\circ\partial^{e_{k}}\bigr)(x)\bigr|
 \leq C\sup_{f\in B_{0}}\bigl|\bigl(\frac{\delta_{z+he_{k}}-\delta_{z}}{h}-\delta_{z}\circ\partial^{e_{k}}\bigr)(f)\bigr|
\underset{h\to 0}{\to}0
\end{flalign*}
yielding to $(\partial^{e_{k}})^{E}(\psi(u))(z)={^{t}u}(\delta_{z}\circ\partial^{e_{k}})$. 
This implies $\overline{\partial}^{E}(\psi(u))(z)={^{t}u}(\delta_{z}\circ\overline{\partial})$. 
So for all $x\in F$ and $z\in\Omega$ we have
\begin{align*}
 \overline{\partial}^{E}(\psi(u))(z)(x)
&={^{t}u}(\delta_{z}\circ\overline{\partial})(x)
 =u(x)(\delta_{z}\circ\overline{\partial})
 =\langle\delta_{z}\circ\overline{\partial},J^{-1}_{1}(u(x))\rangle\\
&=\langle\delta_{z},\overline{\partial}J^{-1}_{1}(u(x))\rangle
 =\langle[J_{1}\circ\overline{\partial}\circ J^{-1}_{1}](u(x)),\delta_{z}\rangle
 =\langle(\overline{\partial}_{1}\circ u)(x),\delta_{z}\rangle\\
&=\langle(\overline{\partial}^{\ast}_{1}u)(x),\delta_{z}\rangle
 =\psi^{-1}(g)(x)(\delta_{z})
 ={^{t}(\mathcal{J}^{-1}\circ R_{g}^{t})}(x)(\delta_{z})\\
&=(\mathcal{J}^{-1}\circ R_{g}^{t})(\delta_{z})(x)
 =\mathcal{J}^{-1}(\mathcal{J}(g(z))(x)
 =g(z)(x).
\end{align*}
Thus $\overline{\partial}^{E}(\psi(u))(z)=g(z)$ for every $z\in\Omega$ which proves the surjectivity.

$b)$ Let $E$ be an ultrabornological (PLS)-space satisfying $(PA)$. 
Since the nuclear Fr\'echet space $\mathcal{OV}(\Omega)$ is also a Schwartz space, 
its strong dual $\mathcal{OV}(\Omega)_{b}'$ is a (DFS)-space. 
By \cite[Theorem 4.1, p.\ 577]{Dom1} we obtain $\operatorname{Ext}^{1}_{PLS}(\mathcal{OV}(\Omega)_{b}',E)=0$ 
as the bidual $\mathcal{OV}(\Omega)''$ satisfies $(\Omega)$, $E$ is a (PLS)-space satisfying $(PA)$ 
and condition (c) in the theorem is fulfilled because $\mathcal{OV}(\Omega)_{b}'$ is the strong dual of 
a nuclear Fr\'echet space.
Moreover, we have $\operatorname{Proj}^{1} E=0$ due to \cite[Corollary 3.3.10, p.\ 46]{Wengen} 
because $E$ is an ultrabornological (PLS)-space. 
Then the exactness of the sequence \eqref{thm19.2}, \cite[Theorem 3.4, p.\ 567]{Dom1} and \cite[Lemma 3.3, p.\ 567]{Dom1} 
(in the lemma the same condition (c) as in \cite[Theorem 4.1, p.\ 577]{Dom1} is fulfilled 
and we choose $H=\mathcal{OV}(\Omega)''$ and 
$F=G=\mathcal{EV}(\Omega)''$), imply that the sequence
\[
0\to L(E_{b}',\mathcal{OV}(\Omega)'')
\overset{i^{\ast}_{0}}{\to}L(E_{b}',\mathcal{EV}(\Omega)'')
\overset{\overline{\partial}^{\ast}_{1}}{\to}L(E_{b}',\mathcal{EV}(\Omega)'')\to 0
\]
is exact. The maps $i^{\ast}_{0}$ and $\overline{\partial}^{\ast}_{1}$ are defined like in part $a)$. 
Especially, we get that
\begin{equation}\label{thm19.4}
\overline{\partial}^{\ast}_{1}\colon L(E_{b}',\mathcal{EV}(\Omega)'')\to
L(E_{b}',\mathcal{EV}(\Omega)'')
\end{equation}
is surjective. 

By \cite[Remark 4.4, p.\ 1114]{D/L} we have $L_{b}(\mathcal{EV}(\Omega)_{b}',E'')
\cong L_{b}(E_{b}',\mathcal{EV}(\Omega)'')$ via taking adjoints 
since $\mathcal{EV}(\Omega)$, being a Fr\'echet-Schwartz space, is a (PLS)-space and hence 
its strong dual an (LFS)-space, which is regular by \cite[Corollary 6.7, $10.\Leftrightarrow 11.$, p.\ 114]{Wengen}, 
and $E$ is an ultrabornological (PLS)-space, in particular, reflexive by \cite[Theorem 3.2, p.\ 58]{Dom2}. 
In addition, the map
\[
T\colon L_{b}(\mathcal{EV}(\Omega)_{b}',E'')\to 
L_{b}(\mathcal{EV}(\Omega)_{b}',E),
\]
defined by $T(u)(y):=\mathcal{J}^{-1}(u(y))$ for $u\in L(\mathcal{EV}(\Omega)_{b}',E'')$ 
and $y\in \mathcal{EV}(\Omega)'$, is a topological isomorphism because $E$ is reflexive.
Due to \prettyref{prop:isom_op_spaces} b) we obtain the topological isomorphism
\begin{gather*}
\psi:=S\circ\mathcal{J}^{-1}\circ{^{t}(\cdot)}\colon L_{b}(E_{b}',\mathcal{EV}(\Omega)'')
\to\mathcal{EV}(\Omega,E),\\
\psi(u)=[S\circ\mathcal{J}^{-1}\circ{^{t}(\cdot)}](u)=\bigl[z\mapsto\mathcal{J}^{-1}({^{t}u}(\delta_{z}))\bigr],
\end{gather*}
with the inverse given by 
\[ 
 \psi^{-1}(f)
=(S\circ\mathcal{J}^{-1}\circ{^{t}(\cdot)})^{-1}(f)
=[{^{t}(\cdot)}\circ\mathcal{J}\circ S^{-1}](f)
={^{t}(\mathcal{J}\circ\mathcal{J}^{-1}\circ R_{f}^{t})}={^{t}(R_{f}^{t})}
\]
for $f\in\mathcal{EV}(\Omega,E)$.

Let $g\in\mathcal{EV}(\Omega,E)$. 
Then $\psi^{-1}(g)\in L(E_{b}',\mathcal{EV}(\Omega)'')$ 
and by the surjectivity of \eqref{thm19.4} there exists $u\in L(E_{b}',\mathcal{EV}(\Omega)'')$ 
such that $\overline{\partial}^{\ast}_{1}u=\psi^{-1}(g)$. 
So we have $\psi(u)\in\mathcal{EV}(\Omega,E)$. 
The last step is to show that $\overline{\partial}^{E}\psi(u)=g$. 
Like in part a) we gain for every $z\in\Omega$
\[	
\overline{\partial}^{E}(\psi(u))(z)=\mathcal{J}^{-1}({^{t}u}(\delta_{z}\circ\overline{\partial}))
\]
and for every $x\in E'$
\begin{align*}
  {^{t}u}(\delta_{z}\circ\overline{\partial})(x)
&=u(x)(\delta_{z}\circ\overline{\partial})
 =(\overline{\partial}^{\ast}_{1}u)(x)(\delta_{z})
 =\psi^{-1}(g)(x)(\delta_{z})
 ={^{t}(R_{g}^{t})}(x)(\delta_{z})\\
&=\delta_{z}(x\circ g)
 =x(g(z))
 =\mathcal{J}(g(z))(x).
\end{align*}
Thus we have ${^{t}u}(\delta_{z}\circ\overline{\partial})=\mathcal{J}(g(z))$ and therefore
$\overline{\partial}^{E}(\psi(u))(z)=g(z)$ for all $z\in\Omega$.
\end{proof}

Due to \cite[1.4 Lemma, p.\ 110]{Vogt1977} and \cite[Proposition 4.2, p.\ 577]{Dom1} we have 
the following relation between the cases $a)$ and $b)$ in \prettyref{thm:surj_CR_DN_PA}.

\begin{rem}\label{rem:link_PA_DN}
Let $F$ be a Fr\'echet-Schwartz space. 
Then $F$ satisfies $(DN)$ if and only if the (DFS)-space $E:=F_{b}'$ satisfies $(PA)$. 
\end{rem}

Thus case $a)$ is included in case $b)$ if $F$ is a Fr\'echet-Schwartz space. Therefore $a)$ is only 
interesting for Fr\'echet spaces $F$ which are not Schwartz spaces. 

\begin{cor}\label{cor:surj_CR_DN_PA}
Let $\mu$ be a subharmonic strong weight generator, $(a_{n})_{n\in\N}$ strictly increasing, $a_{n}< 0$ for all $n\in\N$, 
$\lim_{n\to\infty}a_{n}=0$ and $\mathcal{V}:=(\exp(a_{n}\mu))_{n\in\N}$.
Let \prettyref{cond:weights} with $\psi_{n}(z):=(1+|z|^{2})^{-2}$, $z\in\C$, and 
\prettyref{cond:dense} with $I_{214}(n)\geq I_{14}(n+1)$ and $\Omega_{n}:=\{z\in\C\;|\;|\im(z)|<n\}$ 
for all $n\in\N$ be fulfilled.
If 
\begin{enumerate}
\item [a)] $E:=F_{b}'$ where $F$ is a Fr\'echet space over $\C$ satisfying $(DN)$, or 
\item [b)] $E$ is an ultrabornological (PLS)-space over $\C$ satisfying $(PA)$, 
\end{enumerate}
then
\[
\overline{\partial}^{E}\colon \mathcal{EV}(\C,E)\to\mathcal{EV}(\C,E)
\]
is surjective.
\end{cor}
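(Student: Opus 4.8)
The plan is to recognise the statement as a direct specialisation of \prettyref{thm:surj_CR_DN_PA} to the case $\Omega=\C$ with $\Omega_n:=S_n$, and to verify that each hypothesis of that theorem is supplied by the present assumptions. Two of the four structural hypotheses — \prettyref{cond:weights} with $\psi_n(z):=(1+|z|^2)^{-2}$ and \prettyref{cond:dense} with $I_{214}(n)\geq I_{14}(n+1)$ — are assumed verbatim here, so only two things genuinely need checking: the subharmonicity of $-\ln\nu_n$ and the property $(\Omega)$ for $\mathcal{OV}(\C)$.

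First I would verify the subharmonicity. Since $\nu_n=\exp(a_n\mu)$ we have $-\ln\nu_n=-a_n\mu=(-a_n)\mu$, and as $a_n<0$ the coefficient $-a_n$ is strictly positive. Because $\mu$ is assumed subharmonic and any positive scalar multiple of a subharmonic function is again subharmonic, $-\ln\nu_n$ is subharmonic on $\C$ for every $n\in\N$. This is exactly where the negativity of $(a_n)$ enters, consistent with the remark after \prettyref{cor:scalar_surj_CR_example} that the sign restriction stems precisely from this subharmonicity requirement.

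Next I would obtain property $(\Omega)$ for $\mathcal{OV}(\C)$ by invoking \prettyref{thm:omega_for_strips}. Its hypotheses are matched one-to-one: $\mu$ is a strong weight generator (being a subharmonic strong weight generator), $(a_n)_{n\in\N}$ is strictly increasing with $a_n<0$ for all $n$ and $\lim_{n\to\infty}a_n=0$, $\mathcal{V}=(\exp(a_n\mu))_{n\in\N}$, and $\Omega_n=S_n$. The two remaining inputs to that theorem are \prettyref{cond:dense} with $I_{214}(n)\geq I_{14}(n+1)$, assumed here, and condition $(\omega.1)$, which is part of the assumed \prettyref{cond:weights}. Hence \prettyref{thm:omega_for_strips} guarantees that $\mathcal{OV}(\C)$ satisfies $(\Omega)$.

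With subharmonicity and $(\Omega)$ established, all hypotheses of \prettyref{thm:surj_CR_DN_PA} hold for both choices a) and b) of $E$, so the surjectivity of $\overline{\partial}^E$ on $\mathcal{EV}(\C,E)$ follows immediately. As the argument is purely a concatenation of two earlier theorems, I expect no real obstacle; the only subtlety worth isolating is the coherence of the sign conditions — that $a_n<0$ simultaneously furnishes the subharmonicity of $-\ln\nu_n$ demanded by \prettyref{thm:surj_CR_DN_PA} and remains within the admissible range of \prettyref{thm:omega_for_strips}.
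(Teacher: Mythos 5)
Your proposal is correct and follows exactly the paper's own route: the paper proves this corollary as a direct consequence of \prettyref{thm:surj_CR_DN_PA} and \prettyref{thm:omega_for_strips}. Your additional verification that $-\ln\nu_{n}=(-a_{n})\mu$ is subharmonic because $-a_{n}>0$, and that $(\omega.1)$ needed for \prettyref{thm:omega_for_strips} is supplied by the assumed \prettyref{cond:weights}, simply makes explicit the checks the paper leaves to the reader.
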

\begin{proof}
The assertion is a direct consequence of \prettyref{thm:surj_CR_DN_PA} and 
\prettyref{thm:omega_for_strips}. 
\end{proof}

\prettyref{cor:surj_CR_DN_PA} generalises a part of \cite[5.24 Theorem, p.\ 95]{ich} ($K=\varnothing$) 
which is the case $\gamma=1$ of the next corollary. 

\begin{cor}\label{cor:surj_CR_DN_PA_exa}
Let $(a_{n})_{n\in\N}$ be strictly increasing, $a_{n}<0$ for all $n\in\N$, $\lim_{n\to\infty}a_{n}=0$, 
$\mathcal{V}:=(\exp(a_{n}\mu))_{n\in\N}$ and $\Omega_{n}:=\{z\in\C\;|\;|\im(z)|<n\}$ for all $n\in\N$ 
where 
\[
 \mu\colon\C \to [0,\infty),\;\mu(z):=|\re(z)|^{\gamma},
\]
for some $0<\gamma\leq 1$. If 
\begin{enumerate}
\item [a)] $E:=F_{b}'$ where $F$ is a Fr\'echet space over $\C$ satisfying $(DN)$, or 
\item [b)] $E$ is an ultrabornological (PLS)-space over $\C$ satisfying $(PA)$, 
\end{enumerate}
then
\[
\overline{\partial}^{E}\colon \mathcal{EV}(\C,E)\to\mathcal{EV}(\C,E)
\]
is surjective.
\end{cor}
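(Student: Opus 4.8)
The plan is to deduce the statement from \prettyref{thm:surj_CR_DN_PA} applied to the explicit weight family, recycling the verifications already carried out for \prettyref{cor:omega_for_strips} and \prettyref{cor:scalar_surj_CR_example}. The routine inputs are quickly assembled for $\mu(z)=|\re(z)|^{\gamma}$: as in the proof of \prettyref{cor:omega_for_strips}, $\mu$ is a strong weight generator for every $0<\gamma\leq 1$ and $(\omega.1)$ holds by \prettyref{ex:weights_little_omega}~a); \prettyref{cond:weights} with $\psi_{n}(z):=(1+|z|^{2})^{-2}$ holds by \prettyref{ex:weights_little_omega}~a) as well (take $I=\{2\}$ in case (iv), so that $I_{0}=\{1\}$ and $\mu(x)=|x_{1}|^{\gamma}$), and together with $(\omega.2)^{1}$ this makes $\mathcal{EV}(\C)$ and its closed subspace $\mathcal{OV}(\C)$ nuclear Fr\'echet spaces. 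Moreover \prettyref{cond:dense} with $I_{214}(n)\geq I_{14}(n+1)$ holds by the computation in the proof of \prettyref{cor:omega_for_strips}, and $\mathcal{OV}(\C)$ satisfies $(\Omega)$ by \prettyref{cor:omega_for_strips}.

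The one genuinely delicate hypothesis of \prettyref{thm:surj_CR_DN_PA}, and the step I expect to be the main obstacle, is the subharmonicity of $-\ln\nu_{n}$. Since $a_{n}<0$ we have $-\ln\nu_{n}=|a_{n}|\,|\re(z)|^{\gamma}$, and for $0<\gamma<1$ the function $|\re(z)|^{\gamma}$ is \emph{not} subharmonic: off the line $\{\re(z)=0\}$ its Laplacian equals $\gamma(\gamma-1)|\re(z)|^{\gamma-2}<0$, so it is strictly superharmonic there and \prettyref{thm:surj_CR_DN_PA} cannot be invoked verbatim. To circumvent this I would switch to the auxiliary family $\widetilde{\mathcal{V}}:=(\exp(a_{n}\widetilde{\mu}))_{n\in\N}$ with $\widetilde{\mu}(z):=|z|^{\gamma}$. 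The elementary bound used already in \prettyref{cor:omega_for_strips},
\[
|\re(z)|^{\gamma}\leq |z|^{\gamma}\leq |\re(z)|^{\gamma}+n^{\gamma},\quad z\in\Omega_{n}=S_{n},
\]
shows that on each strip $S_{n}$ the weights $\exp(a_{n}|\re(z)|^{\gamma})$ and $\exp(a_{n}|z|^{\gamma})$ differ only by a factor lying between $\exp(a_{n}n^{\gamma})$ and $1$; hence the seminorms $|\cdot|_{n,m,\alpha}$ for $\mathcal{V}$ and $\widetilde{\mathcal{V}}$ are equivalent on each step and the spaces coincide, $\mathcal{EV}(\C,E)=\mathcal{E}\widetilde{\mathcal{V}}(\C,E)$ and $\mathcal{OV}(\C)=\mathcal{O}\widetilde{\mathcal{V}}(\C)$.

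For $\widetilde{\mu}$ the subharmonicity is now genuine, $\Delta|z|^{\gamma}=\gamma^{2}|z|^{\gamma-2}\geq 0$, so $-\ln\widetilde{\nu}_{n}=|a_{n}|\,|z|^{\gamma}$ is subharmonic on $\C$. I would then confirm that the remaining hypotheses of \prettyref{thm:surj_CR_DN_PA} survive the change of weight: \prettyref{cond:weights} follows directly from the subadditivity $\bigl||z+\zeta|^{\gamma}-|z|^{\gamma}\bigr|\leq|\zeta|^{\gamma}$ (for $(\omega.1)$) and from the super-polynomial decay of $\exp((a_{n}-a_{J_{2}(n)})|z|^{\gamma})$ against $(1+|z|^{2})^{2}$ (for $(\omega.2)^{1}$); \prettyref{cond:dense} with $I_{214}(n)\geq I_{14}(n+1)$ is precisely \cite[4.10 Example a), p.\ 22]{kruse2018_5}, which is stated for $\widetilde{\mu}$; and property $(\Omega)$ for $\mathcal{O}\widetilde{\mathcal{V}}(\C)=\mathcal{OV}(\C)$ is inherited from \prettyref{cor:omega_for_strips}, since $(\Omega)$ is an isomorphism invariant of Fr\'echet spaces. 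With these in place, \prettyref{thm:surj_CR_DN_PA}~a)/b) applies to $\widetilde{\mathcal{V}}$ and yields the surjectivity of $\overline{\partial}^{E}$ on $\mathcal{E}\widetilde{\mathcal{V}}(\C,E)=\mathcal{EV}(\C,E)$, which is the assertion. For $\gamma=1$ no detour is necessary, as $|\re(z)|$ is already subharmonic ($\Delta|\re(z)|=2\delta_{\{\re(z)=0\}}\geq 0$) and one may quote \prettyref{cor:surj_CR_DN_PA} directly.
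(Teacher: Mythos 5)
Your proof is correct, and at the top level it follows the same strategy as the paper: feed the $(\Omega)$ result for strips into the splitting theorem. The paper's own proof, however, is a one-line citation of \prettyref{cor:surj_CR_DN_PA} together with \prettyref{cor:omega_for_strips}, and you have put your finger on exactly the point this one-liner glosses over: \prettyref{cor:surj_CR_DN_PA} requires $\mu$ to be a \emph{subharmonic} strong weight generator, and $\mu(z)=|\re(z)|^{\gamma}$ is not subharmonic for $0<\gamma<1$ (its Laplacian is $\gamma(\gamma-1)|\re(z)|^{\gamma-2}<0$ off the imaginary axis), so the citation applies verbatim only when $\gamma=1$. Your detour through the equivalent family $\widetilde{\mathcal{V}}=(\exp(a_{n}|z|^{\gamma}))_{n\in\N}$ --- equivalent on each strip by $|\re(z)|^{\gamma}\leq|z|^{\gamma}\leq|\re(z)|^{\gamma}+n^{\gamma}$, hence yielding the same spaces with equivalent graded seminorms, while $-\ln\widetilde{\nu}_{n}=|a_{n}|\,|z|^{\gamma}$ is genuinely subharmonic --- is precisely the device the paper itself uses one level down, inside the proof of \prettyref{cor:omega_for_strips} and in \cite[4.10 Example a), p.\ 22]{kruse2018_5} behind \prettyref{cor:scalar_surj_CR_example}, but never makes explicit at the level of this corollary; so your version is the more complete argument. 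Two further details are to your credit: you verify \prettyref{cond:weights} for $\widetilde{\mathcal{V}}$ by hand, which is indeed necessary because \prettyref{ex:weights_little_omega} a) does not cover the radial weight $|z|^{\gamma}$ on strips (taking $I_{0}=\{1,2\}$ there forces $I=\varnothing$, hence $\Omega_{n}=\R^{2}$ in case (iv), and case (iii) needs $\partial\Omega\neq\varnothing$); and the transfer of $(\Omega)$ to $\mathcal{O}\widetilde{\mathcal{V}}(\C)$ via invariance under topological isomorphism is legitimate. An equivalent repair, closer to the paper's formulation, would be to keep $\mathcal{V}$ and rerun the proof of \prettyref{thm:surj_CR_DN_PA}, replacing the appeal to \prettyref{thm:scalar_CR_surjective} by \prettyref{cor:scalar_surj_CR_example}: the subharmonicity hypothesis enters that proof only through the exactness of the scalar sequence, which is already known for these weights. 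Either way the conclusion stands; your write-up simply makes visible a step the paper leaves implicit.
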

\begin{proof}
Follows from \prettyref{cor:surj_CR_DN_PA} and \prettyref{cor:omega_for_strips}. 
\end{proof}

To close this section we provide some examples of ultrabornological (PLS)-spaces satisfying $(PA)$ and 
spaces of the form $E:=F_{b}'$ where $F$ is a Fr\'echet space satisfying $(DN)$.

\begin{exa}\label{exa:PLS_PA_DF_DN}
a) The following spaces are ultrabornological (PLS)-spaces with property $(PA)$ 
and also strong duals of a Fr\'echet space satisfying $(DN)$:
\begin{itemize}
 \item the strong dual of a power series space of inifinite type $\Lambda_{\infty}(\alpha)_{b}'$,
 \item the strong dual of any space of holomorphic functions $\mathcal{O}(U)_{b}'$ 
 where $U$ is a Stein manifold with the strong Liouville property (for instance, for $U=\C^{d}$),
 \item the space of germs of holomorphic functions $\mathcal{O}(K)$ where $K$ is a completely pluripolar 
 compact subset of a Stein manifold (for instance $K$ consists of one point),
 \item the space of tempered distributions $\mathcal{S}(\R^{d})_{b}'$ and 
 the space of Fourier ultra-hyperfunctions $\mathcal{P}'_{\ast\ast}$ (with the strong topology),
 \item the weighted distribution spaces $(K\{pM\})_{b}'$ of Gelfand and Shilov if the weight $M$ satisfies
 \[
  \sup_{|y|\leq 1}M(x+y)\leq C\inf_{|y|\leq 1}M(x+y),\quad x\in\R^{d},
 \]
 \item $\mathcal{D}(K)_{b}'$ for any compact set $K\subset\R^{d}$ with non-empty interior,
 \item $\mathcal{C}^{\infty}(\overline{U})_{b}'$ for any non-empty open bounded set $U\subset\R^{d}$ with $\mathcal{C}^{1}$-boundary.
\end{itemize}
b) The following spaces are ultrabornological (PLS)-spaces with property $(PA)$:
\begin{itemize}
 \item an arbitrary Fr\'echet-Schwartz space,
 \item a (PLS)-type power series space $\Lambda_{r,s}(\alpha,\beta)$ whenever $s=\infty$ 
 or $\Lambda_{r,s}(\alpha,\beta)$ is a Fr\'echet space,
 \item the spaces of distributions $\mathcal{D}(U)_{b}'$ and ultradistributions of Beurling type $\mathcal{D}_{(\omega)}(U)_{b}'$ 
 for any open set $U\subset\R^{d}$,
 \item the kernel of any linear partial differential operator with constant coefficients in $\mathcal{D}(U)_{b}'$ 
 or in $\mathcal{D}_{(\omega)}(U)_{b}'$ when $U\subset\R^{d}$ is open and convex,
 \item the space $L_{b}(X,Y)$ where $X$ has $(DN)$, $Y$ has $(\Omega)$ and both are nuclear Fr\'echet spaces. 
 In particular, $L_{b}(\Lambda_{\infty}(\alpha),\Lambda_{\infty}(\beta))$ if both spaces are nuclear.
\end{itemize}
c) The following spaces are strong duals of a Fr\'echet space satisfying $(DN)$:
\begin{itemize}
 \item the strong dual $F_{b}'$ of any Banach space $F$,
 \item the strong dual $\lambda^{2}(A)_{b}'$ of the K\"othe space $\lambda^{2}(A)$ 
 with a K\"othe matrix $A=(a_{j,k})_{j,k\in\N_{0}}$ satisfying 
 \[
  \exists\;p\in\N_{0}\;\forall\;k\in\N_{0}\;\exists\;n\in\N_{0},C>0:\;a_{j,k}^{2}\leq Ca_{j,p}a_{j,n}.
 \]
\end{itemize}
\end{exa} 
\begin{proof}
The statement for the spaces in a) and b) follows from \cite[Corollary 4.8, p.\ 1116]{D/L}, 
\cite[Proposition 31.12, p.\ 401]{meisevogt1997}, 
\cite[Proposition 31.16, p.\ 402]{meisevogt1997} and \prettyref{rem:link_PA_DN}. 
The first part of statement c) is obvious since Banach spaces clearly satisfy the property $(DN)$. 
The second part on the K\"othe space $\lambda^{2}(A)$ follows from \cite[Satz 12.11 a), p.\ 305]{Kaballo}.
\end{proof}

We note that the cases that $E$ is a Fr\'echet-Schwartz space or that $E=\Lambda_{r,s}(\alpha,\beta)$ 
is a Fr\'echet space or that $E=F_{b}'$ where $F$ is a Banach space are already contained in the case 
that $E$ is a Fr\'echet space (see \cite[4.9 Corollary, p.\ 21]{kruse2018_5}).

\subsection*{Acknowledgements}
The present paper is a generalisation of parts of Chapter 5 of the author's Ph.D Thesis \cite{ich}, 
written under the supervision of M.\ Langenbruch. The author is deeply grateful to him for his support and advice. 
Further, it is worth to mention that some of the results appearing in the Ph.D Thesis 
and thus their generalised counterparts in this work are essentially due to him.
\bibliography{biblio}
\bibliographystyle{plain}
\end{document}